\newcommand     {\comment}[1]   {}
\newcommand{\mute}[2] {}
\newcommand     {\printname}[1] {}
\newcommand{\todo}[2][1]{\vspace{1 mm}\par \noindent
\marginpar{\textsc{}} \framebox{\begin{minipage}[c]{#1
\textwidth} \tt #2 \end{minipage}}\vspace{1 mm}\par}
\newcommand{\sm}{\left(M,\omega\right)}
\newcommand{\rorbit}{\mathcal{O}} 
\newcommand{\torbit}{\mathfrak{O}} 
\numberwithin{equation}{section}
\newtheorem {Theorem}[equation]                   {Theorem}
\newtheorem*{Theorem*}                   {Theorem}
\newtheorem {Lemma}[equation]           {Lemma}
\newtheorem* {Corollary*}                {Corollary}
\newtheorem {Proposition} [equation]    {Proposition}
\newtheorem*{Question*}{Question}
\newtheorem* {Lemma*}                    {Lemma}
\theoremstyle{definition}
\newtheorem{Definition}[equation]{Definition}
\newtheorem*{Definition*}{Definition}
\theoremstyle{remark}
\newtheorem{Remark}[equation]{Remark}
\newtheorem*{Remark*}{Remark}
\newtheorem{Example}[equation]{Example}
\long\def\symbolfootnote[#1]#2{\begingroup%
\def\thefootnote{\fnsymbol{footnote}}\footnote[#1]{#2}\endgroup}
\def \Re {{\mathfrak R}}
\def \Im {{\mathfrak I}}
\def    \inv    {^{-1}}
\def \MmodT {{ M/\!/T}}
\def    \YmodT {{ Y/\!/ T}}
\def    \R	{{\mathbb R}}
\def    \C	{{\mathbb C}}
\def    \Z       {{\mathbb Z}}
\def    \h	{{\mathfrak h}}
\def    \ft	{{\mathfrak t}}
\def    \fh	{{\mathfrak h}}
\def	\ker 	{{\operatorname{ker}}}
\def	\exp	{{\operatorname{exp}}}
\begin{document}

\title[Connectedness beyond semitoric systems I]{Connectedness of fibers beyond semitoric systems I: the non-degenerate case}

\author[Daniele Sepe]{Daniele Sepe}
\address{Escuela de Matem\'aticas, Universidad Nacional de Colombia sede Medell\'in, Colombia.}
\email{dsepe@unal.edu.co}

\author[Susan Tolman]{Susan Tolman}
\address{Department of Mathematics, University of Illinois at Urbana-Champaign,
 Urbana, IL 61801.}
\email{tolman@illinois.edu}

\thanks{\emph{2020 Mathematics Subject Classification}.
Primary 37J35, 53D20. Secondary 37J39, 53D35.}
\thanks{{\em Keywords:} Completely integrable Hamiltonian systems, non-degenerate singular points, complexity one actions, connected fibers.}

\begin{abstract} 
In this paper we study the connectedness of the fibers of integrable systems that extend complexity one $T$-spaces with proper moment maps, assuming that every tall singular point is non-degenerate. Our main result states that if there are no tall singular points with a hyperbolic block and connected $T$-stabilizer, then each fiber is connected. Moreover, we prove that the above condition is necessary if either some reduced space is simply connected or the moment map for the integrable system is generic in a natural sense.
\end{abstract}

\maketitle

\tableofcontents
\section{Introduction}

A crucial step in studying Hamiltonian group actions is understanding the fibers of the moment map. In this direction, connectedness of the fibers has been established in the following cases:
\begin{enumerate}[label=(\arabic*),ref=(\arabic*)]
\item \label{item:connab} Hamiltonian group actions of compact abelian Lie groups with proper moment maps  \cite{At,GS2,LMTW}; and
\item \label{item:semitoric} semi-toric systems in dimension four and their natural generalization in dimension six  \cite{VN,Wa}.
\end{enumerate}

In both cases the connectedness of the fibers of the moment map is the first step towards classification results: For instance, it is central to proving convexity of the moment map image of Hamiltonian torus actions (see \cite{At,GS2,LMTW}), and to obtaining a convexity result for semi-toric systems in dimension four (see \cite{VN,Wa}).

In this paper we study the connectedness of the fibers of integrable systems that are extensions of complexity one torus actions,  assuming that every tall singular point is non-degenerate. We hope that this will eventually allow us to extend the classification of semi-toric systems in dimension four to higher dimensions. Complexity one torus actions are the simplest after the toric case and have been partially classified in \cite{KT1,KT2,KT3}. Non-degenerate singular points of integrable systems are generic, and can be thought of as the symplectic analog of critical points of Morse-Bott functions; they have attracted much interest, both for classical and quantum integrable systems (see \cite{BF,SV} and references therein). In particular, the integrable systems that we study include the semi-toric systems of \cite{VN}, and their higher dimensional generalization in \cite{Wa}.

In our main result, Theorem~\ref{thm:equivalence}, we give sufficient conditions for fibers of the moment map to be connected,
generalizing the results in \cite{VN,Wa}.  
Morse theory plays a central role in the theorems \ref{item:connab} and \ref{item:semitoric} above,  and our theorem both uses and  generalizes  the following simple Morse-theoretic result: Let $N$ be a closed, connected 2-dimensional manifold and let $\overline{g} \colon N \to \R$ be a Morse function. If $\overline{g}$ has no singular point of index one, then each fiber is connected (see Lemma \ref{Morse2}). We also explore the extent to which the hypotheses in our main result are necessary (see Theorem \ref{thm:converse}). \\

Before stating our results precisely, we begin with few definitions from integrable systems.

\begin{Definition}
  An {\bf integrable system} is a quadruple $\left(M,\omega, V,
    f\right)$, where $\sm$ is a connected
  $2n$-dimensional symplectic manifold, $V$ is an $n$-dimensional real
  vector space, and $f \colon M \to V^*$ is a smooth map  
   such that:
  \begin{enumerate}
  \item  the Poisson bracket $\big\{ \langle f, \xi_1 \rangle, \langle f, \xi_2 \rangle \big\} = 0$ for all $\xi_1, \xi_2 \in V$, and
  \item the function $f$ is regular on a dense subset of $M$.
  \end{enumerate}
  Here $\langle \cdot, \cdot \rangle \colon V^* \times V \to \R$ is the natural pairing.
\end{Definition}

The standard definition of integrable systems takes $V = \R^n$. In this paper, we assume that every integrable system is {\bf complete}, that is,  $f$ is
a moment map for a Hamiltonian $V$-action. \\

Fix an integrable system $\left(M,\omega, V,
    f\right)$. A point $p \in M$ is {\bf
  regular} if $D_pf$ has full rank and {\bf singular} otherwise. Given a point 
$p \in M$, the $V$-orbit $\rorbit$ through $p$ is isotropic. Hence, the vector space $\left(T_p
  \rorbit\right)^{\omega}/T_p\rorbit$ inherits a natural symplectic structure. 
Moreover, the stabilizer $W \subseteq V$ of $p$ acts by linear symplectomorphisms on $T_p M$ and fixes $T_p \rorbit.$
We call the induced symplectic action of $W$ on $(T_p\rorbit)^\omega/T_p \rorbit$
the {\bf linearized action} at $p$.

\begin{Definition}\label{defn:non-degenerate}
Let $p$ be a point in an integrable system $\left(M,\omega,V,f\right)$.
Let $W_0 \subseteq V$ be the Lie algebra of the stabilizer of $p$. The point $p$ is {\bf non-degenerate} if there exist identifications $ W_0 \simeq \R^{\dim W}$ and $(T_p\rorbit)^\omega/T_p \rorbit \simeq \C^{\dim W}$ such that the homogeneous moment map for the linearized action at $p$ is a product of the following ``blocks":
\begin{itemize}
    \item ({\bf elliptic}) $z \in \C \mapsto \frac{1}{2}|z|^2$;
    \item ({\bf hyperbolic}) $z \in \C \mapsto \Im(z^2)$;
    \item ({\bf focus-focus}) $(z_1,z_2) \in \C^2 \mapsto \left(\frac{1}{2}(|z_1|^2 - |z_2|^2),\Im(z_1z_2)\right)$.
\end{itemize}
Here $\Im(w)$ denotes the imaginary part of $w \in \C$ and $\C^{\dim W}$ has the standard symplectic structure. Moreover, $p$ has {\bf purely elliptic type} if $p$ is non-degenerate and all the blocks are elliptic.
\end{Definition}

In particular, by definition regular points are non-degenerate and have purely elliptic type.

\begin{Remark}\label{rmk:different_convention}
  The above definition of non-degeneracy is equivalent to the standard one (see, for instance, \cite[Theorem 4.8]{SV}). In particular,
  the focus-focus block in the above definition can be transformed to the  one in the
  integrable systems literature by applying a linear symplectomorphism.
\end{Remark}

\begin{Remark}\label{rmk:non-deg_Morse}
The notion of non-degeneracy can be viewed as a ``symplectic Morse-Bott" condition. In particular, in dimension two, a singular point is non-degenerate in the sense of Definition \ref{defn:non-degenerate} if and only if it is  non-degenerate  in the sense of Morse theory. Moreover, it is hyperbolic if and only if it has index one and elliptic otherwise. 
\end{Remark}

Next we introduce complexity one torus actions. Let $T$ be an $(n-1)$-dimensional torus with Lie algebra $\ft$. 

\begin{Definition}\label{compl_one}
A {\bf complexity one $\boldsymbol{T}$-space} is
a triple $(M,\omega,\Phi)$, where $(M,\omega)$ is a connected $2n$-dimensional
symplectic manifold
and $\Phi \colon M \to \ft^*$ is a moment map for 
an effective $T$-action.
\end{Definition}

Let $(M,\omega,\Phi)$ be a complexity one $T$-space. Given $\beta \in \ft^*$, the {\bf reduced space} at $\beta$ is the quotient $\Phi^{-1}(\beta)/T$, endowed with the subquotient topology; if $\beta = 0$, we write $\MmodT$ for the reduced space. 
A point $p \in M$
is {\bf short} if $[p]$ is an isolated point in the reduced space $\Phi^{-1}(\Phi(p))/T$ and is {\bf tall} otherwise. 
By \cite[Proposition 6.1]{KT1}, 
if $\Phi$ is proper each reduced space $\Phi^{-1}(\beta)/T$ 
containing more than one point is homeomorphic to (but not necessarily diffeomorphic to) a closed, connected oriented 2-dimensional manifold; moreover,  any two non-empty reduced spaces have the same fundamental group \cite[Lemma 5.7 and Corollary 9.7]{KT1}. 

 The main objects that we study in this paper are integrable systems that extend complexity one torus actions and have the property that every tall singular point is non-degenerate. Our main result is as follows.

\begin{Theorem}\label{thm:equivalence}
Let $\left(M,\omega, \ft \times \R, f:=(\Phi,g)\right)$ be an integrable system such that $(M,\omega,\Phi)$ is a complexity one $T$-space with a proper moment map. Assume that each tall 
singular point in $\left(M,\omega, \ft \times \R, f\right)$ is non-degenerate and that no such point has a hyperbolic block and connected $T$-stabilizer. Then
 the fiber $f^{-1}(\beta,c)$ is connected for each $(\beta, c) \in \ft^* \times \R$;
moreover,  the reduced space $\Phi^{-1}(\beta)/T$ is simply connected.
\end{Theorem}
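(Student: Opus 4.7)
The plan is to reduce the problem to two-dimensional Morse theory on the reduced spaces of the $T$-action and then invoke Lemma~\ref{Morse2}. Since $\{g, \langle\Phi, \xi\rangle\} = 0$ for every $\xi \in \ft$, the function $g$ is $T$-invariant, so $g|_{\Phi^{-1}(\alpha)}$ descends to a continuous function $\overline{g}_\alpha \colon \Phi^{-1}(\alpha)/T \to \R$ with $f^{-1}(\alpha, c) = \pi_\alpha^{-1}\!\bigl(\overline{g}_\alpha^{-1}(c)\bigr)$, where $\pi_\alpha$ is the orbit map. By the Atiyah--Guillemin--Sternberg theorem (item~\ref{item:connab}), $\Phi^{-1}(\alpha)$ is connected; since $T$ is connected, every fibre of $\pi_\alpha$ is connected, and therefore $f^{-1}(\alpha, c)$ is connected if and only if $\overline{g}_\alpha^{-1}(c)$ is. If $\Phi^{-1}(\alpha)/T$ consists of a single point both conclusions are immediate, so I may assume $\Phi^{-1}(\alpha)/T$ is a closed connected oriented $2$-manifold $N_\alpha$ by the cited result of \cite{KT1}; every singular point of $f$ in $\Phi^{-1}(\alpha)$ is then tall and hence non-degenerate by hypothesis.

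Next, I would analyse $\overline{g}_\alpha$ on $N_\alpha$ using Morse theory. At a $\Phi$-regular point $p \in \Phi^{-1}(\alpha)$ one has $T_{[p]} N_\alpha \cong (T_p(T\cdot p))^\omega/T_p(T\cdot p)$; the Poisson commutation makes $dg_p$ vanish on $T_p(T\cdot p)$, so $[p]$ is critical for $\overline{g}_\alpha$ exactly when $X_g(p) \in T_p(T\cdot p)$, which is exactly when $p$ is singular for $f$. Using the Eliasson-type normal form at a non-degenerate singular point, the germ of $\overline{g}_\alpha$ at $[p]$ should be identified, up to sign and additive constant, with the symplectic reduction of the linearised Williamson blocks: an elliptic block yields a local extremum of $\overline{g}_\alpha$; a focus-focus block reduces to the linear form $\Im(w)$ on $\C$ and so contributes no critical point at all; and a hyperbolic block yields a saddle of Morse index $1$. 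A parallel local analysis at a $\Phi$-singular point $p$, where $N_\alpha$ may carry an orbifold chart, combines the local model for the $T$-action with the Williamson decomposition of the linearised $(\ft \times \R)$-action.

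The main technical step---and the expected obstacle---is the index computation at singular points with non-trivial $T$-stabilizer: one must carefully track how the blocks of the linearised $(\ft \times \R)$-action split into blocks absorbed into the $T$-action and blocks that appear in $\overline{g}_\alpha$ after reduction. The key linear-algebraic claim is that a hyperbolic block of $p$ contributes an index-$1$ critical point of $\overline{g}_\alpha$ at $[p]$ if and only if the $T$-stabilizer of $p$ is a subtorus of $T$, i.e., is connected. Granted this, the hypothesis of the theorem guarantees that $\overline{g}_\alpha$ has no index-$1$ critical points, so Lemma~\ref{Morse2} applies and gives connectedness of $\overline{g}_\alpha^{-1}(c)$, and hence of $f^{-1}(\alpha, c)$, for every $c$. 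For the second conclusion, the Morse inequalities on the closed connected oriented surface $N_\alpha$ then give $\chi(N_\alpha) \geq 2$; since $\chi(N_\alpha) \leq 2$ for any such surface, $N_\alpha$ is diffeomorphic to $S^2$ and is therefore simply connected.
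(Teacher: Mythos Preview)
Your overall strategy matches the paper's exactly: descend $g$ to the reduced space, show that $\overline{g}_\alpha$ is Morse on a closed oriented surface with no index-$1$ critical points, and invoke Lemma~\ref{Morse2}. You have also correctly located the crux---that a hyperbolic block yields an index-$1$ saddle for $\overline{g}_\alpha$ precisely when the $T$-stabilizer is connected---and you flag it honestly as the expected obstacle.

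What is missing is the proof of that crux, and your proposed route to it diverges from the paper's in a way that matters. The result you cite from \cite{KT1} gives $\Phi^{-1}(\alpha)/T$ only the structure of a \emph{topological} surface; with its natural sheaf of smooth functions it is not a manifold at exceptional orbits (Lemma~\ref{trivial}), and your suggestion of orbifold charts would force you into orbifold Morse theory. The paper sidesteps this entirely: it builds a \emph{new} smooth atlas on the reduced space from charts adapted to the defining polynomial of each local model (Proposition~\ref{connected}), and then verifies chart by chart that $\overline{g}$ is genuinely Morse with the stated indices. The case analysis is organised not by Williamson type directly but by the degree $N$ of the defining polynomial (Propositions~\ref{gcrit}, \ref{prop:N=1_hyp}, \ref{prop:N=2_non-deg}, \ref{prop:N>2_elliptic}); in particular, the mechanism by which a hyperbolic block with disconnected stabiliser becomes a \emph{regular} point of $\overline{g}$ is that the local quotient is $\C/\Z_2$, and the $\Z_2$-invariant saddle $(x,y)\mapsto xy$, rewritten in the invariant coordinates $(xy,\,x^2-y^2)$, is simply the projection onto the first factor. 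Finally, your Euler-characteristic argument for simple connectivity is correct but redundant: Lemma~\ref{Morse2} already delivers both connected level sets and $\pi_1(N_\alpha)=1$ from the absence of index-$1$ points.
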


In addition to semi-toric systems,
Theorem~\ref{thm:equivalence} 
applies to the following well-known integrable system.

\begin{Example}\label{exm:short-degenerate}
The  $1 \! \! : \! \! 2$ resonance is the integrable system $(\C^2,\omega, \R \times \R, f:=(\Phi,g))$, where $\omega \in \Omega^2(\C^2)$ is the standard symplectic form,
\[\Phi(z_1,z_2) := \textstyle \frac{1}{2}|z_1|^2 + |z_2|^2, \quad \text{and} \quad g(z_1,z_2) := \Re(z^2_1\overline{z}_2).\]
As explained in \cite[Sections 3.3.2 and 3.3.3]{CVN}, the origin, which is the only short point, is singular and degenerate.  Every other singular point in this integrable system is non-degenerate; moreover,  $z \in \C^2$ is a singular point with a hyperbolic block exactly if $z \in \{0\} \times \C^\times$,
that is, exactly if its $S^1$-stabilizer is $\Z_2 = \{\pm 1\}$.  
Therefore, the $1 \! \! : \! \! 2$ resonance satisfies the hypothesis of Theorem~\ref{thm:equivalence}. In particular, the fibers of $f$ are connected and each reduced space of the $S^1$-action is simply connected (see \cite[Remark 3.1]{CVN}).
\end{Example}

\begin{Remark}\label{rmk:wacheux}
    Theorem \ref{thm:equivalence} implies connectedness of the fibers for the higher dimensional analogs of semi-toric systems that Wacheux studied in \cite{Wa}: Integrable systems $\left(M,\omega, \ft \times \R, f:=\left(\Phi,g\right)\right)$ such that $\left(M,\omega,\Phi\right)$ is a complexity
  one $T$-space with a proper moment map all of whose singular points are non-degenerate without hyperbolic blocks. In particular, our main result generalizes \cite[Theorem 4.2.4]{Wa} to arbitrary dimensions.
\end{Remark}

\begin{Remark}
    Theorem \ref{thm:equivalence} is comparable to \cite[Theorems 1 -- 3]{PRV} in that all these results   assume that the singular points of an integrable system are non-degenerate,  disallow (certain) singular  points with a hyperbolic block, and prove that the fibers are connected.  However, the results in \cite{PRV}  do not require the first component of the moment map to generate a Hamiltonian circle action, but only hold in dimension four and have additional hypotheses on the moment map. 
\end{Remark}

Unfortunately, Theorem \ref{thm:equivalence} is not quite sharp. If a reduced space has positive genus, then $M$ must have non-degenerate singular points with a hyperbolic block and connected $T$-stabilizer; see Remark~\ref{highergenus}.
However, as we show below, it is still possible for 
all the fibers of $f$ to be connected in this case. 

\begin{Example} \label{exm:too_bad} Let $\omega$ be the standard symplectic form on the torus $N= \R^2/\Z^2$.
The function $g \colon N \to \R$ defined by $g([x,y]) = \cos(2 \pi x) + \cos (2 \pi y)$  
 is Morse and has connected fibers, a unique minimum and maximum, and two points of index one.
 Hence, $(N,\omega,\R, g)$ is an integrable system with  two hyperbolic singular points.
 (Since the compact torus $T$ is  trivial, all points  have connected $T$-stabilizer.)
To form higher dimensional examples, take the product of this integrable system with a compact, connected symplectic toric manifold.
\end{Example}
 
Nevertheless,  as our next result shows, Theorem~\ref{thm:equivalence} is quite close to being sharp.

\begin{Theorem}\label{thm:converse}
Let $\left(M,\omega, \ft \times \R, f:=(\Phi,g)\right)$ be an integrable system such that $(M,\omega,\Phi)$ is a complexity one $T$-space with a proper moment map. Assume that each tall 
singular point in $\left(M,\omega, \ft \times \R, f\right)$ is non-degenerate.  Moreover, assume that one of the following holds:
\begin{itemize} 
\item The reduced space $\Phi^{-1}(\beta_0)/T$ is simply connected for some (and hence all) $\beta_0 \in \Phi(M).$
\item Each fiber of $f$ contains at most one $T$-orbit of non-degenerate singular points with a hyperbolic block.
\end{itemize}
Then the fiber $f^{-1}(\beta,c)$ is connected for all $(\beta,c) \in \ft^* \times \R$ if and only if no tall singular point in $M$ has a hyperbolic block and connected $T$-stabilizer.
\end{Theorem}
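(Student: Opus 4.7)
The plan is to deduce the ``if'' direction directly from Theorem~\ref{thm:equivalence}, and to establish the ``only if'' direction by contrapositive: assuming a non-degenerate singular point $p$ with a hyperbolic block and connected $T$-stabilizer exists, I would exhibit a level set of $f$ that is disconnected. The strategy is to push the question down to a two-dimensional Morse-theoretic statement on the reduced space at $\Phi(p)$ and then split according to the two listed hypotheses.

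First, I would set $\alpha = \Phi(p)$ and work with $N = \Phi^{-1}(\alpha)/T$. Since $\{g, \langle \Phi, \xi \rangle\} = 0$ for every $\xi \in \ft$, the function $g$ is $T$-invariant and descends to $\bar g \colon N \to \R$, and because $T$ is connected the fibers of the quotient $\Phi^{-1}(\alpha) \to N$ are connected, so $f^{-1}(\alpha,c)$ is connected if and only if $\bar g^{-1}(c) \subseteq N$ is connected. The key local claim, and the hardest step, is that $[p] \in N$ is a non-degenerate Morse saddle of $\bar g$. This uses the local normal form for non-degenerate singular points (cf.\ \cite{SV}) together with the observation that since $T$ is compact, its linearized action at $p$ cannot contribute a moment-map term of the form $\Im(z^2)$ and so must act trivially on each hyperbolic factor of $(T_p\rorbit)^\omega/T_p\rorbit$; combined with the connected $T$-stabilizer assumption, this forces $p$ to be tall and yields local coordinates on $N$ near $[p]$ in which $\bar g = \Im(z^2)$, a Morse saddle by Remark~\ref{rmk:non-deg_Morse}. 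Then by \cite[Proposition~6.1]{KT1}, $N$ is a closed, connected, oriented topological 2-manifold.

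Under hypothesis~(a), some reduced space is simply connected, so by \cite[Lemma~5.7 and Corollary~9.7]{KT1} every reduced space has trivial fundamental group and $N \simeq S^2$. The Reeb graph of any Morse function on $S^2$ is a tree (its first Betti number is bounded by the genus), and the saddle $[p]$ forces a branching vertex of degree at least three in this tree. Since the graph is a tree, at least one of the two sides of this vertex contains at least two edges; the corresponding regular level set of $\bar g$ therefore consists of at least two disjoint circles, producing the required disconnected level.

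Under hypothesis~(b), each level set of $f$ contains at most one orbit of hyperbolic singular points, which via the reduction translates to $\bar g$ having at most one saddle on $N$ at any given critical value. Writing $c = \bar g([p])$, if some additional critical point of $\bar g$ occurs at value $c$ then, being a non-saddle by (b), it is a local extremum of $\bar g$; hence $\bar g^{-1}(c)$ contains an isolated point disjoint from the figure-eight at $[p]$ and is disconnected. Otherwise $[p]$ is the unique critical point of $\bar g$ at level $c$, so crossing $c$ attaches a single 1-handle to the sublevel set and changes the number of boundary circles by exactly $\pm 1$; since $[p]$ has Morse index one, $c$ lies strictly between $\min \bar g$ and $\max \bar g$, so both nearby regular levels contain at least one circle and one of them contains at least two, yielding a disconnected regular level of $\bar g$, hence of $f$. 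The principal obstacle throughout is the local reduction step: verifying that the hyperbolic block together with the connected $T$-stabilizer really does descend to a Morse saddle on $N$. Once this and the level-set correspondence between $f$ and $\bar g$ are in place, the remaining work is elementary Morse theory on closed surfaces.
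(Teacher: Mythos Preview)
Your approach is essentially the paper's: reduce to Morse theory of $\overline g$ on the closed oriented surface $N=\Phi^{-1}(\alpha)/T$, identify index-$1$ critical points of $\overline g$ with orbits having a hyperbolic block and connected $T$-stabilizer, and finish with elementary surface Morse theory. The paper argues the ``only if'' direction directly (assuming connected fibers and deducing no index-$1$ points via Lemmas~\ref{Morse2} and~\ref{even}), while you argue by contrapositive using Reeb-graph and $1$-handle reasoning; these are logically equivalent repackagings of the same facts.

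One point worth flagging: your Reeb-graph and handle-attachment arguments require $\overline g$ to be Morse \emph{globally} on $N$, not just locally at $[p]$. Your sketch of the ``key local claim'' treats only the hyperbolic-with-connected-stabilizer case, but to run Morse theory you must put a smooth structure on all of $N$ and check that $\overline g$ is Morse at every other critical point as well --- in particular at purely elliptic orbits (which give extrema), and at focus-focus orbits and hyperbolic orbits with disconnected stabilizer (which must become \emph{regular} points of $\overline g$, a non-obvious fact). This global statement is Proposition~\ref{prop:Morse}, and establishing it is the bulk of the paper's technical work (Sections~\ref{sec:key-proposition}--\ref{sec:non-degenerate}); citing \cite[Proposition~6.1]{KT1} gives only the topological surface structure, not the smooth one compatible with $\overline g$. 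Your proposal is correct in outline, but the ``principal obstacle'' you identify is larger than the single local model at $p$.
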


\begin{Remark}
    If $M$ is simply connected, then the first bullet point in Theorem \ref{thm:converse} automatically holds. To see this, first note that the quotient space $M/T$ is simply connected by, e.g., \cite[Chapter II, Corollary 6.3]{B}. The claim then follows by \cite[Theorem 1.5]{Li2}. If $M$ has fixed points then, conversely, under the assumptions of Theorem \ref{thm:equivalence} the manifold
    $M$ is simply connected by
 \cite[Theorems 1.4 and 1.5]{Li2}.
 \end{Remark}

\begin{Remark}
More generally, Theorems \ref{thm:equivalence} and \ref{thm:converse} both hold if we replace   ``$\Phi$ is proper" by ``$\Phi$ is proper as a map to an open convex subset of $\ft^*$".
To see this, simply replace Theorem~\ref{thm:connected} in the proofs by its generalization \cite[Theorem 4.3]{LMTW}. Similarly, if we replace   ``$\Phi$ is proper" by  ``$f$ is proper and the fibers of $\Phi$ are connected'', then the first claim in Theorem \ref{thm:equivalence} still holds; see Proposition \ref{prop:extra-result}.
\end{Remark}

The fundamental idea behind the proofs of
Theorems \ref{thm:equivalence} and \ref{thm:converse} is quite simple:
Since $\left(M,\omega, \ft \times \R, f=(\Phi,g)\right)$ is an integrable system,  the function  $g$ is $T$-invariant, and hence induces a function $\overline g \colon \Phi^{-1}(\beta)/T \to \R$.
The  key technical result in our paper is that we can choose a smooth structure on the reduced space $\Phi^{-1}(\beta)/T$ so that each non-trivial component is a smooth oriented surface and
$\overline{g} \colon \Phi^{-1}(\beta)/T \to \R$ is a Morse function; moreover,
the critical points of index $1$ correspond exactly to orbits of tall singular points with a hyperbolic block and connected $T$-stabilizer. This is stated in Proposition~\ref{prop:Morse}, which combines Propositions \ref{connected}, \ref{gcrit} and \ref{nondegenerate}.
Since $\Phi$ is proper, the reduced space $\Phi^{-1}(\beta)/T$ is compact and connected  \cite{At,GS2,LMTW}.  Therefore, the proofs reduce to  standard results in Morse theory; see Lemmas~\ref{Morse2} and \ref{even}.

This method can be used more generally on integrable systems that extend a complexity one torus action but have (sufficiently nice) {\em degenerate} singular points.  For example, in a forthcoming paper \cite{ST}, we prove that the analogs of Theorems \ref{thm:equivalence} and \ref{thm:converse} hold if we also allow systems with a broader class of  ``ephemeral" singular points, including more general  resonances than that in Example~\ref{exm:short-degenerate}. Like focus-focus points and hyperbolic points with disconnected $T$-stabilizers, these ephemeral singular points correspond to regular points on the symplectic quotients.

\begin{Remark}\label{highergenus}
Let $\left(M,\omega, \ft \times \R, f:=(\Phi,g)\right)$ be an integrable system such that $(M,\omega,\Phi)$ is a complexity one $T$-space with a proper moment map. Assume that each tall singular point is non-degenerate. However, suppose that the reduced space $\Phi^{-1}(\beta)/T$ has positive genus $k$. (For example, let $M$ be the product of a symplectic toric manifold with a closed, orientable surface of genus $k$, the map $\Phi$ be (the pullback of) the moment map for the toric action, and $g$ be (the pullback of) a Morse function on the surface.) In this case, by a variation on the above argument, $\Phi^{-1}(\beta)/T$ has at least $2 k$ orbits of non-degenerate singular points with a hyperbolic block and connected $T$-stabilizer. Additionally, if $\Phi^{-1}(\beta)/T$ has {\em more} than
 $2k$ such orbits, then $\overline{g}$ must have additional local minima or maxima;
 hence the fibers of $f$ are not connected by Morse theory; see Lemma \ref{uniquemin}.
\end{Remark}

\begin{Remark}
    The second bullet point in Theorem \ref{thm:converse}  can be thought of as a  genericity condition because generic Morse functions have at most one critical point (of index 1) in each fiber.
\end{Remark}

\subsection*{Structure of the paper}
In Section~\ref{section:complexity_one}, we  review some facts about tall complexity one local models and
then briefly explore a few additional properties.
Next, in  Section~\ref{sec:key-proposition} we prove that we can endow the symplectic quotient $\Phi^{-1}(\beta)/T$ with a smooth structure so that $\overline{g}$ is a Morse function -- as long as most singular points satisfy a certain technical condition that guarantees that we can do this locally.  Most of the rest of the paper is dedicated to proving that non-degenerate singular points satisfy this condition. For example, in Section~\ref{sec:purely_elliptic_type} we prove that tall points that have purely elliptic type satisfy it, and in Section~\ref{sec:semitoric} we show that, if $M$ is four-dimensional, focus-focus points satisfy it.  At this point, we are able to demonstrate our technique by using it to give a new proof that fibers of a semi-toric system are connected. In
Section~\ref{sec:non-degenerate}, we prove that all the remaining types of non-degenerate singular points also satisfy this technical condition. Therefore, we can complete the proof of our theorems in Section~\ref{sec:proofs}.

\subsection*{Acknowledgments} D. Sepe was partially supported by FAPERJ grant JCNE E-26/202.913/2019 and by a CAPES/Alexander von Humboldt Fellowship for Experienced Researchers 88881.512955/2020-01. This study was financed in part by the Coordenação de Aperfeiçoamento de Pessoal de Nível Superior -- Brazil (CAPES) -- Finance code 001.  S. Tolman was partially supported by NSF  DMS Award 2204359 and Simons Foundation Collaboration Grant 637995.

\section{Tall local models}\label{section:complexity_one}

In this section we review some facts about tall complexity one local models.
In particular,  we introduce (tall) complexity one local models,  construct the ``defining monomial" for tall local models, and use it to describe the symplectic quotient $\YmodT$. Moreover, we  provide a generating set for $T$-invariant polynomials on these models. 

Fix a complexity one $T$-space  $(M,\omega,\Phi)$ and
let $\torbit$ be the $T$-orbit through a point $p \in M$. 
Since $\mathfrak O$ is isotropic, the vector space 
$(T_p \torbit)^{\omega}/T_p\torbit$ inherits a natural symplectic structure.
Moreover,  the natural action
of the stabilizer $H \subseteq T$ of $p$ on $T_p M$ induces a symplectic representation of $H$ on
$(T_p \torbit)^{\omega}/T_p\torbit$, called the {\bf symplectic slice representation} at $p$.
It is isomorphic to the representation of $H$ on $\C^{h+1}$
associated to some injective homomorphism  $\rho \colon H \to (S^1)^{h+1}$, where $h = \dim H$.
Let $\eta_i \in \fh^*$ be the differential of the $i$'th component of $\rho$.
The  (not necessarily distinct) vectors $\eta_0,\dots,\eta_h$  are
called the {\bf isotropy weights} at $p$; the multiset of isotropy weights at $p$ is
uniquely determined.

Set $Y := T \times_H (\fh^{\circ} \times \C^{h+1})$, where $\fh^{\circ}$ is the annihilator of $\fh$ in $\ft^*$ and $H$ acts on $T$ by multiplication, on $\fh^{\circ}$ trivially, and on $\C^{h+1}$ by $h \cdot z = \rho(h^{-1})z$.
Fix an inner product on the Lie algebra $\ft$ once and for all, and use it to identify $\ft^*$ with $\fh^\circ \oplus \fh^*.$ 
There exists a  canonical symplectic form $\omega_Y$ on $Y$ so that the $T$-action $s \cdot [t,\alpha,z] = [st,\alpha,z]$
has homogeneous moment
map $\Phi_Y \colon Y \to \ft^*$ given by 
$$\textstyle \Phi_Y([t,\alpha,z]) = \alpha + \Phi_H(z),$$
where $\Phi_H(z) = \frac{1}{2} \sum_i   |z_i|^2 \eta_i$ is the homogeneous moment
map for the $H$-action on $\C^{h+1}$; see Remark \ref{rmk:symp-form}. By the Marle-Guillemin-Sternberg local normal form theorem, there is
a $T$-equivariant symplectomorphism from an invariant neighborhood of $\torbit$
to an open subset of  $Y$  taking $p$ to $[1,0,0]$.

\begin{Definition} \label{symplectic_form_model} The triple $(Y,\omega_Y,\Phi_Y)$ is the {\bf local model} for $p$. 
\end{Definition}

\begin{Remark}\label{rmk:symp-form}
 To construct the symplectic form on $Y$, let $i^* \colon \ft^* \to \fh^*$ be the projection induced by the inclusion $H \hookrightarrow T$. Equip $T^* T \times \C^{h+1}$ with the  symplectic form  given by the sum of the pullbacks of the canonical symplectic form on $T^*T $ and the standard symplectic form on $\C^{h+1}$. The $H$-action on $T^*T \times \C^{h+1} \cong T \times \ft^* \times \C^{h+1}$ given by $h \cdot (t,\beta,w) := (ht, \beta, \rho(t^{-1})w)$ has a moment map $T \times \ft^* \times \C^{h+1} \to \fh^*$ that takes $(t,\beta,z)$ to $i^*(\beta) - \Phi_H(z)$.  The reduced space at $0$ is smooth, and the map $Y \to (T \times \ft^* \times \C^{h+1})/\!/H$ taking $[t,\alpha,z]$ to $[t,\alpha+\Phi_H(z),z]$ is a diffeomorphism. By definition, the symplectic form $\omega_Y$ on $Y$ is the pullback of the reduced symplectic form along this diffeomorphism.
\end{Remark}

We say
that the local model $Y$ is {\bf short} (respectively {\bf tall}) if $[1,0,0] \in Y$ is short (respectively tall). As shown in \cite{KT1}, tall local models\footnote{Here, and throughout the paper, all the local models that we consider are complexity one.} have an important invariant, called the {\em defining monomial}.

\begin{Lemma}\label{lemma:defn_poly}
 Let $Y = T \times_H (\fh^{\circ} \times \C^{h+1})$ be a local model associated to an injective homomorphism $\rho \colon H \to (S^1)^{h+1}$. Then there exists $\xi = (\xi_0,\ldots, \xi_h) \in \Z^{h+1}$ such that the homomorphism $(S^1)^{h+1} \to S^1$
given by $\lambda \mapsto
\prod\limits_{j=0}^h \lambda_j^{\xi_j}$ induces a short exact sequence
\begin{equation}
  \label{eq:3}
  1 \to H \stackrel{\rho}{\to} (S^1)^{h+1} \to S^1 \to 1;
\end{equation}
moreover, the vector $\xi$ is unique up to sign. Finally,  $\xi_i\xi_j \geq 0$ for all $i,j$ if and only if the local model $Y$ is tall.
\end{Lemma}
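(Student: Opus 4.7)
The plan is to first build $\xi$ as a character of $(S^1)^{h+1}$ whose kernel is $\rho(H)$, using general torus theory, and then to rephrase tallness as a positivity statement about the entries of $\xi$ via the dual short exact sequence of Lie algebras.

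\emph{Existence and uniqueness of $\xi$.} Because $\rho$ is injective and $\dim H = h$, the image $\rho(H)$ is a closed codimension-one subgroup of $(S^1)^{h+1}$. The quotient $K := (S^1)^{h+1}/\rho(H)$ is then a compact abelian Lie group of dimension one, and, being the continuous image of the connected group $(S^1)^{h+1}$, it is also connected; hence $K \cong S^1$. After choosing an isomorphism $K \cong S^1$, the composition $(S^1)^{h+1} \to K \cong S^1$ is a character, so it has the form $\lambda \mapsto \prod_j \lambda_j^{\xi_j}$ for a unique $\xi \in \Z^{h+1}$, whose kernel is $\rho(H)$ by construction. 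The only freedom in the identification of $K$ with $S^1$ is the automorphism $z \mapsto z^{-1}$ of $S^1$, which replaces $\xi$ by $-\xi$; so $\xi$ is unique up to sign.

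\emph{Tallness in terms of $\Phi_H^{-1}(0)$.} Using the decomposition $\ft^* = \fh^{\circ} \oplus \fh^*$ in which $\alpha$ and $\Phi_H(z)$ lie in complementary summands, the formula $\Phi_Y([t,\alpha,z]) = \alpha + \Phi_H(z)$ shows that $\Phi_Y^{-1}(0)$ consists exactly of those $[t,\alpha,z]$ with $\alpha = 0$ and $\Phi_H(z) = 0$. A direct computation in $T \times_H \fh^{\circ} \times \C^{h+1}$ shows that $[t_1,0,z_1]$ and $[t_2,0,z_2]$ lie in the same $T$-orbit if and only if $z_1$ and $z_2$ lie in the same $H$-orbit in $\C^{h+1}$. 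Thus $Y$ is tall if and only if $\Phi_H^{-1}(0)$ contains an $H$-orbit besides $\{0\}$, i.e., some $z \neq 0$ satisfies $\sum_{i=0}^{h} \eta_i |z_i|^2 = 0$ in $\fh^*$.

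\emph{Passing to the dual sequence.} The Lie algebra version of the exact sequence \eqref{eq:3} is $0 \to \fh \to \R^{h+1} \to \R \to 0$, where the surjection is $a \mapsto \sum_j \xi_j a_j$. Dualizing gives $0 \to \R \to (\R^{h+1})^* \to \fh^* \to 0$ in which the first map sends $1$ to $\xi$ and the second sends the standard basis vector $e_i^*$ to the isotropy weight $\eta_i$. Thus a vector $a \in \R^{h+1}$ satisfies $\sum a_i \eta_i = 0$ in $\fh^*$ precisely when $a$ is a scalar multiple of $\xi$. Setting $a_i = |z_i|^2$, the tallness criterion becomes the existence of a nonzero vector in $(\R \cdot \xi) \cap \R_{\geq 0}^{h+1}$, which occurs precisely when $\xi$ or $-\xi$ lies in the closed positive orthant, i.e., when $\xi_i \xi_j \geq 0$ for all $i, j$.

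\emph{Main obstacle.} There is no single deep obstacle; the main care lies in the bookkeeping that simultaneously identifies $\xi$ as a character of $(S^1)^{h+1}$, as a linear functional on $\R^{h+1}$, and as the generator of the one-dimensional kernel of the surjection $(\R^{h+1})^* \to \fh^*$ induced by the isotropy weights. Once these identifications are set, everything reduces to linear algebra together with the elementary observation that a line in $\R^{h+1}$ meets the positive orthant non-trivially exactly when its direction has all coordinates of one sign.
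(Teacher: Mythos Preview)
Your proof is correct and self-contained, whereas the paper's proof simply invokes \cite[Lemmas 5.4 and 5.8]{KT1}: existence and uniqueness of $\xi$ are attributed to \cite[Lemma 5.8]{KT1}, and tallness is characterized via the intermediate equivalence ``$Y$ tall $\iff \Phi_Y$ not proper'' from \cite[Lemma 5.4]{KT1}, which \cite[Lemma 5.8]{KT1} then translates into the sign condition on $\xi$. Your argument bypasses properness entirely and instead reads off tallness directly from the dual short exact sequence $0 \to \R^* \to (\R^{h+1})^* \to \fh^* \to 0$: the condition $\Phi_H(z)=0$ becomes $(|z_0|^2,\dots,|z_h|^2) \in \R\cdot\xi$, and the existence of a nonzero solution is then the elementary orthant criterion. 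This is essentially the content the paper records separately as Remark~\ref{rmk:brilliant}, so you have in effect folded that remark into the proof. Your route is more transparent for a reader who does not have \cite{KT1} at hand; the paper's route has the side benefit of recording the link to properness of $\Phi_Y$, which is used elsewhere (e.g.\ in the proof of Lemma~\ref{trivial}).
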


\begin{proof}
Existence of $\xi$ and its uniqueness up to sign follow from \cite[Lemma 5.8]{KT1}. By \cite[Lemma 5.4]{KT1} (and its proof), the local model $Y$ is tall  exactly if $\Phi_Y$ is not proper, which, by \cite[Lemma 5.8]{KT1}, is equivalent to $\xi_i\xi_j \geq 0$  for all $i,j$.
\end{proof}

\begin{Remark}\label{rmk:brilliant}
Let $\eta_0,\ldots, \eta_h \in \h^*$ be the isotropy weights at $p = [1,0,0] \in Y$.
By \eqref{eq:3}, the homomorphism $\R^{h+1} \to \R$ given by $a
  \mapsto \sum\limits_j a_j\xi_j$ induces dual short exact sequences
  \begin{gather*}
    0 \to \fh \stackrel{\rho_*}{\to} \R^{h+1} \to \R \to 0; \\
    0 \to \R^* \to \left(\R^{h+1}\right)^* \stackrel{\rho^*}\to \fh^* \to 0.
    \end{gather*}
  \noindent
  In particular, $b = (b_0,\ldots,b_h) \in \R^{h+1}$ lies in $\rho_*(\fh)$ exactly if $\sum\limits_j b_j \xi_j = 0$,  while $\rho^*(a) = \sum\limits_j a_j \eta_j$
  vanishes exactly if $a$ is a multiple of $\xi$.
  Hence, if $Y$ is tall then $(1,\dots,1) \notin \rho_*(\fh)$ because $\xi \neq 0$ can be chosen to lie in $\Z_{\geq 0}^{h+1}.$
  
\end{Remark}

\begin{Definition}\label{defn:defining_poly}
 Fix a tall local model $Y$. By Lemma~\ref{lemma:defn_poly}, there exists a unique $\xi = (\xi_0,\ldots, \xi_h) \in \Z_{\geq 0} ^{h+1}$ inducing the short exact sequence \eqref{eq:3}. The map $P \colon \C^{h+1} \to \C$ given by $P(z) = \prod_{j=0}^hz_j^{\xi_j}$ is called the {\bf defining monomial} of $Y$. By a slight abuse of notation, we  use the same name for the map $P\colon Y \to \C$ given by $[t,\alpha,z]\mapsto
  \prod_{j=0}^hz_j^{\xi_j}$. The sum  $N := \sum_{j=0}^h \xi_j$  is called the {\bf degree} of $P$.
\end{Definition}

\begin{Example}\label{ex1}
Let $\Z_2 = \{\pm 1\}$ act on $\C$ by $\lambda \cdot z = \lambda z$. The associated local model  $S^1 \times_{\Z_2} \R^* \times \C$ is tall.
The defining monomial $P\colon \C \to \C$ is given by $P(z) = z^2$ and has degree $2$.
\end{Example}

\begin{Example} \label{ex2}
Let $S^1$ act effectively on $\C^2$ by 
$\lambda \cdot (z_1,z_2) = (\lambda z_1, \lambda^{-1} z_2)$ with moment map $\Phi (z_1,z_2)= \frac{1}{2}(|z_1|^2 - |z_2|^2)$. This is a tall local model.
%
The defining monomial $P \colon \C^2 \to \C$ is given by $P(z_1,z_2) = z_1 z_2$ and also has degree $2$.
\end{Example}

As in \cite{KT1}, we say that a point $p \in M$ (and the orbit $[p] \in M/T$) is {\bf exceptional} if  every point in some neighborhood  of
$[p]$ in the reduced space $\Phi^{-1}(\Phi(p))/T$  has strictly smaller stabilizer.  By definition, every short point is exceptional. The following criterion  determines if a point in a tall local model is exceptional.


\begin{Lemma} \label{lemma:criterion-exc} Fix a tall local model $Y= T \times_H (\fh^\circ \times \C^{h+1})$ with defining monomial $P(z) = \prod_{i=0}^h z_i^{\xi_i}$. Then $ [ t, \beta, z ] \in Y$ is exceptional exactly if $\sum_{z_i = 0} \xi_i > 1$, where the sum is over all $i$ such that $z_i = 0$.
 \end{Lemma}

 \begin{proof} 
 Arguing as in the proof of Lemma \ref{lemma:defn_poly}, since $Y$ is tall the moment map $\Phi_Y$ is not proper. 
 Therefore, the claim can be obtained by joining the statements of \cite[Lemmas 5.15 and 5.17]{KT1}. 
 \end{proof}

The reduced space $\YmodT = \Phi^{-1}_Y(0)/T$ is endowed with the subquotient topology and a sheaf of {\bf smooth functions} defined as follows:  For any open subset $U \subseteq \YmodT$, let $C^\infty(U)$ be the set of functions $\overline{g} \colon U \to \R$ whose pullback to $\Phi_Y^{-1}(0)$ is the restriction of a
smooth $T$-invariant function $g \colon Y \to \R$. A homeomorphism between topological spaces endowed with sheaves of smooth functions is a {\bf diffeomorphism} if it induces an isomorphism between the sheaves of smooth functions.


\begin{Lemma}\label{trivial}
  Let $Y = T \times_H (\fh^\circ \times \C^{h+1})$ be a tall local model
  with moment map $\Phi_Y \colon Y \to \ft^*$. 
 The defining monomial $P \colon Y \to \C$ induces a homeomorphism $ \overline{P} \colon \YmodT \to \C$ that restricts to a diffeomorphism on  the set 
 of non-exceptional orbits. Moreover, $[q] \in \YmodT$ is exceptional exactly if $[q] = [1,0,0]$ and the degree $N$ of the defining polynomial of $Y$ is greater than $1$.
\end{Lemma}

\begin{proof}
 Arguing as in the proof of Lemma \ref{lemma:defn_poly}, since $Y$ is tall the moment map $\Phi_Y$ is not proper. Hence, by \cite[Corollary 6.3]{KT1} the map $\overline{P} \colon \YmodT \to \C$ is a homeomorphism. Moreover, by \cite[Corollary 7.2]{KT1} the restriction of $\overline{P} \colon \YmodT \to \C$ to the complement of the exceptional orbits is a diffeomorphism onto its image.  By Lemma \ref{lemma:criterion-exc},  if $q \in \Phi_Y^{-1}(0)$ is exceptional then $\overline{P}([q]) = 0$.  Since $P([1,0,0]) = 0$, this implies that $[1,0,0]$ is the only possible exceptional orbit.  Finally, by Lemma~\ref{lemma:criterion-exc}, $[1,0,0]$ is exceptional exactly if $N > 1$.  
\end{proof}

The next result allows us to understand the smooth structure on the reduced
space $\YmodT$ even when the homeomorphism $\overline{P} \colon \YmodT \to \C$ is not a diffeomorphism.

\begin{Lemma}\label{lemma:hom}
Let $Y = T \times_H (\fh^\circ \times \C^{h+1})$ be a tall local model with defining monomial $P(z) = \prod_{i=0}^h z_i^{\xi_i}$.  Assume that $\xi_i = 0$ for all $i > k$.
Let the group $$\textstyle K := \{ \lambda \in (S^1)^{k+1} \mid \prod_{j=0}^k \lambda_j^{\xi_j} = 1 \}$$  act naturally on  $\C^{k+1}$.
The inclusion $\C^{k+1} \hookrightarrow Y$ given by $$(z_0,\ldots, z_{k}) \mapsto [1,0,(z_0,\ldots, z_{k}, 0,\ldots, 0)]$$
induces a diffeomorphism between $\C^{k+1}/\!/K$ and $\YmodT$.
\end{Lemma}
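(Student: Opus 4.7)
The plan is to identify $\C^{k+1}$ as a slice for the $T$-action on $\Phi_Y^{-1}(0)$ whose intersection with the zero level set is exactly $\Phi_K^{-1}(0)$, and then verify that the smooth structures agree. The whole analysis rests on Remark~\ref{rmk:brilliant}, which pins down $\Phi_H^{-1}(0)\subset\C^{h+1}$ explicitly in terms of $\xi$.

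First I would show that every $T$-orbit in $\Phi_Y^{-1}(0)$ meets the image of $\iota$. Using the splitting $\ft^*=\fh^\circ\oplus\fh^*$, we have $\Phi_Y^{-1}(0)=\{[t,0,z]:\Phi_H(z)=0\}$, and by Remark~\ref{rmk:brilliant} the condition $\Phi_H(z)=0$ forces $(|z_0|^2,\dots,|z_h|^2)$ to be a nonnegative multiple of $\xi$. In particular $z_j=0$ whenever $\xi_j=0$, so $z_j=0$ for $j>k$, and acting by $T$ we may take $t=1$. The remaining condition $\sum_{j\le k}\eta_j|z_j|^2=0$ is equivalent to $\Phi_K(z_0,\dots,z_k)=0$, since both say that $(|z_0|^2,\dots,|z_k|^2)$ lies in the span of $(\xi_0,\dots,\xi_k)\in\R^{k+1}$.

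Next I would show that two slice representatives lie in the same $T$-orbit iff they lie in the same $K$-orbit. Since $[s,0,z]=[1,0,z']$ iff $s\in H$ and $z'=\rho(s)^{-1}z$, the $T$-orbit of $[1,0,z]$ intersected with the slice $\{t=1\}$ is the $\rho(H)$-orbit of $z$. By \eqref{eq:3}, $\rho(H)=\{\lambda\in(S^1)^{h+1}:\prod_j\lambda_j^{\xi_j}=1\}$, and because $\xi_j=0$ for $j>k$, projecting onto the first $k+1$ coordinates maps $\rho(H)$ onto $K$. Thus on vectors supported in the first $k+1$ coordinates the residual action of $\rho(H)$ is exactly the $K$-action, so we obtain a continuous bijection $\overline\iota\colon\C^{k+1}/\!/K\to\YmodT$. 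That $\overline\iota$ is a homeomorphism follows from standard reduction arguments, or alternatively by composing with $\overline P$ and invoking Lemma~\ref{trivial}.

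For the smooth structure, pullback along $\iota$ sends $T$-invariant smooth functions on $Y$ to $K$-invariant smooth functions on $\C^{k+1}$. Conversely, given a $K$-invariant smooth $\widetilde g$ on $\C^{k+1}$, the extension $\widetilde G(z_0,\dots,z_h):=\widetilde g(z_0,\dots,z_k)$ is $\rho(H)$-invariant on $\C^{h+1}$ (because $\rho(H)$ surjects onto $K$), so $G(t,\alpha,z):=\widetilde G(z)$ descends to a $T$-invariant smooth function on $Y$ that restricts to $\widetilde g$ on the slice; hence $\overline\iota$ is a diffeomorphism. In the special case $P(z)=z_0$ we have $k=0$ and $K=\{1\}$, so $\C^{k+1}/\!/K=\C$ tautologically, and $\overline P\circ\overline\iota(z)=z$, so $\overline P$ itself is the inverse diffeomorphism $[1,0,(z,0)]\leftrightarrow z$. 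The main obstacle is the bookkeeping: correctly matching $K$ with the residual action of $\rho(H)$ on the slice, and verifying that the zero-level conditions on the two sides coincide under the correspondence of weights in Remark~\ref{rmk:brilliant}.
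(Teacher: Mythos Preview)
Your argument is correct and follows essentially the same route as the paper, just with the details spelled out: the paper simply observes that $\Phi_H^{-1}(0)=\Phi_K^{-1}(0)\times\{0\}$ (your first paragraph) and that $\rho(H)=K\times(S^1)^{h-k}$ (your second and third paragraphs), and then declares the claim immediate. One small imprecision: in the smooth-structure step you justify $\rho(H)$-invariance of $\widetilde G$ by saying ``$\rho(H)$ surjects onto $K$,'' but what you actually need (and what holds) is that the projection of $\rho(H)$ to the first $k{+}1$ coordinates \emph{lands in} $K$; equivalently, $\rho(H)=K\times(S^1)^{h-k}$, which is exactly the identification the paper records.
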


\begin{proof} Let $H$ act on $\C^{h+1}$ via $\rho \colon H \hookrightarrow (S^1)^{h+1}$. 
Let  $\Phi_H \colon \C^{h+1} \to \fh^*$ 
and $\Phi_K \colon \C^{k+1} \to \mathfrak{k}^*$ be the homogeneous moment maps for the $H$-action on $\C^{h+1}$ and the $K$-action on $\C^{k+1}$, respectively.
Since $\xi_i = 0$ for all $i > k$, we have
$\Phi_H^{-1}(0) = \Phi_K^{-1}(0) \times \{0\}$, and so $\Phi_Y^{-1}(0) =  
T \times_H (\{0\} \times (\Phi_K^{-1}(0) \times \{0\}))$, where $\Phi_Y : Y \to \ft^*$ is the moment map for the $T$-action on $Y$.
Since $\rho(H)= K \times (S^1)^{h-k}$, the claim follows immediately.
\end{proof}

On the reduced space $\YmodT$ there is a simple relation between the magnitudes of the defining monomial and of $z$.
\begin{Lemma}\label{lemma:constant}
Let $Y := T \times_H (\fh^\circ \times \C^{h+1})$ be a tall 
local model with moment map $\Phi_Y$.  
Let $N$ be the degree of the
defining monomial  $P(z) = \prod_{j} z_j^{\xi_j}$.
Then for all $[t,\alpha,z] \in \YmodT$,
$$\textstyle \big\lvert z \rvert^2 =N \big(\prod_j \xi_j^{\xi_j}\big)^{-\frac{1}{N}}\lvert P(z) \big\rvert^\frac{2}{N},$$
where $0^0 = 1$ by convention.
\end{Lemma}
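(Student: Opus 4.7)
The plan is to translate the claim into a constraint on the magnitudes $|z_j|^2$ on the zero level set of the moment map, use the short exact sequence in Remark \ref{rmk:brilliant} to pin those magnitudes down up to a single real parameter, and then solve for that parameter in terms of $|P(z)|$.

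First, I would use the splitting $\ft^* = \fh^\circ \oplus \fh^*$ to observe that $\Phi_Y([t,\alpha,z]) = \alpha + \Phi_H(z) = 0$ forces $\alpha = 0$ and $\Phi_H(z) = \frac{1}{2}\sum_j \eta_j |z_j|^2 = 0$ in $\fh^*$. Each $|z_j|^2$ is $T$-invariant, since $T$ acts only on the first factor of $Y = T \times_H \fh^\circ \times \C^{h+1}$ and the $H$-action is by $\rho \colon H \hookrightarrow (S^1)^{h+1}$, which preserves moduli. Hence both $|z|^2 = \sum_j |z_j|^2$ and $|P(z)|^2 = \prod_j |z_j|^{2\xi_j}$ descend to well-defined functions on $\YmodT$.

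Next, I would invoke Remark \ref{rmk:brilliant}: the vanishing $\sum_j \eta_j |z_j|^2 = 0$ in $\fh^*$ says exactly that $(|z_0|^2,\ldots,|z_h|^2) \in \ker \rho^* = \R \cdot \xi$. Because $Y$ is tall, $\xi$ can be chosen in $\Z_{\geq 0}^{h+1}$ by Lemma \ref{lemma:defn_poly}, and because each $|z_j|^2 \geq 0$, there is a unique $t \geq 0$ with $|z_j|^2 = t\xi_j$ for every $j$; in particular $|z_j| = 0$ whenever $\xi_j = 0$.

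Finally, I would compute
$$|z|^2 = \sum_j t\xi_j = tN \quad \text{and} \quad |P(z)|^2 = \prod_j (t\xi_j)^{\xi_j} = t^N \prod_j \xi_j^{\xi_j},$$
using the convention $0^0 = 1$ to handle indices with $\xi_j = 0$. Solving the second equation for $t$ and substituting into the first yields the stated identity. The main obstacle is conceptual rather than technical: one must recognize that the moment map constraint together with tallness forces the magnitude vector to lie on the single ray $\R_{\geq 0} \cdot \xi$, after which the result falls out by direct calculation.
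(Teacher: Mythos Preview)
Your proposal is correct and follows essentially the same approach as the paper's proof: reduce to $\Phi_H(z)=0$, use Remark~\ref{rmk:brilliant} and tallness to conclude that $(|z_0|^2,\dots,|z_h|^2)$ is a nonnegative multiple of $\xi$, and then compute $|z|^2$ and $|P(z)|^2$ directly. The only difference is cosmetic---you spell out the $T$-invariance and the splitting $\alpha=0$, $\Phi_H(z)=0$ a bit more explicitly than the paper does.
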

\begin{proof}
Let $\eta_0,\dots,\eta_h \in \mathfrak h^*$ be the weights for the $H$-action on $\C^{h+1}$.
The moment map $\Phi_Y$ sends $[t,\alpha,z] \in Y$ to  $\alpha + \Phi_H(z) \in \ft^*$,
where $\Phi_H(z) = \sum_{j=0}^h \frac{1}{2} |z_j|^2 \eta_j$.
Thus it suffices to check that $\lvert z  \rvert^2 = \big(\prod_j \xi_j^{\xi_j}\big)^{-\frac{1}{N}} \big\lvert P(z) \big\rvert^\frac{2}{N}$
for all $z \in \Phi_H^{-1}(0)$.
Fix $z \in \Phi_H^{-1}(0)$.  
Since the local model $Y$ is tall and $\Phi_H(z) = \sum_{j=0}^h \frac{1}{2} |z_j|^2 \eta_j = 0$, by Remark \ref{rmk:brilliant} there exists $c \geq 0$ 
such that $|z_j|^2 = c \xi_j$ for all $j$.
Therefore,
\begin{equation*}
  \begin{split}
    \lvert z \rvert^2 &= \sum\limits_{j=0}^{h} \lvert z_j \rvert^2 =
    cN; \\
    |P(z)|^2 = \Big \lvert  \prod_{j=0}^{h} z_j^{\xi_j} \Big \rvert^2 &= \prod\limits_{j=0}^{h} \lvert z_j
    \rvert^{2\xi_j} =
    c^N \prod\limits_{j=0}^{h}\xi_j^{\xi_j}.
  \end{split}
\end{equation*}
\noindent
The result holds by combining these equations.
\end{proof}

To study integrable systems that extend complexity one $T$-actions, we need to understand $T$-invariant functions on these spaces.  We start by considering the simplest $T$-invariant functions on local models: $T$-invariant polynomials.

\begin{Definition}\label{t-invariantpoly}
  Let $Y = T \times_H (\fh^{\circ} \times \C^{h+1})$ be a local model. By a slight abuse of notation, we call a $T$-invariant function
  $g \colon Y \to \R$ a {\bf $\boldsymbol{T}$-invariant (homogeneous) polynomial} if its restriction to $\fh^\circ \times \C^{h+1}$ is a (homogeneous) polynomial.
\end{Definition}

\begin{Lemma}\label{invariant_polys}
Let $Y = T \times_H (\fh^{\circ} \times \C^{h+1})$ be a tall 
local model with moment map $\Phi_Y \colon Y \to \ft^*$.  
The algebra of $T$-invariant polynomials on $Y$
is generated by the real and imaginary parts of the defining monomial $P \colon Y \to \C$, the map $[t,\alpha,z] \mapsto |z|^2$, and the components of  $\Phi_Y$.
\end{Lemma}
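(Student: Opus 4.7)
The plan is to reduce the question to classical invariant theory for the $H$-action on $\h^\circ \times \C^{h+1}$ and then exploit tallness twice: once to factor invariant monomials in terms of the defining polynomial $P$, and once to recover each $|z_j|^2$ from $|z|^2$ and the components of $\Phi_Y$.

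First I would use the identification $Y/T \cong (\h^\circ \times \C^{h+1})/H$ (a standard feature of the associated bundle construction $T \times_H (\h^\circ \times \C^{h+1})$) to reduce the question to: the algebra of $H$-invariant polynomials on $\h^\circ \times \C^{h+1}$ is generated by linear functionals on $\h^\circ$, the $|z_j|^2$ individually, and $\mathrm{Re}\, P$, $\mathrm{Im}\, P$. Since $H$ acts trivially on $\h^\circ$, the reduction further splits: it suffices to generate $H$-invariant polynomials on $\C^{h+1}$, then multiply by polynomials in $\alpha$. Next, because $H$ sits inside $(S^1)^{h+1}$ via $\rho$, I would decompose into weight monomials $z^a\bar z^b$ with $a,b \in \Z^{h+1}_{\geq 0}$; such a monomial is an $(S^1)^{h+1}$-eigenvector of weight $a-b$, so it is $H$-invariant precisely when $\rho^*(a-b) = 0$, which by the short exact sequence of Lemma \ref{lemma:defn_poly} means $a - b = m\xi$ for some $m \in \Z$.

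Here is where tallness enters: since $\xi_j \geq 0$ for all $j$, if $m \geq 0$ then $a_j \geq m\xi_j$ for every $j$ (and $a_j - m\xi_j = b_j$), so
\[
z^a \bar z^b \;=\; P(z)^m \prod_{j=0}^h |z_j|^{2 b_j},
\]
and symmetrically $z^a \bar z^b = \overline{P(z)}^{-m} \prod_j |z_j|^{2 a_j}$ when $m \leq 0$. This shows the $H$-invariant polynomial algebra on $\C^{h+1}$ is generated by $\mathrm{Re}\, P$, $\mathrm{Im}\, P$, and the individual $|z_j|^2$.

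It remains to express each $|z_j|^2$ in terms of $|z|^2$ and the components of $\Phi_Y$. Using the splitting $\ft^* = \h^\circ \oplus \h^*$, the components of $\Phi_Y$ restricted to $\h^\circ \times \C^{h+1}$ consist of linear functionals on $\alpha \in \h^\circ$ together with the components of $\Phi_H(z) = \tfrac{1}{2}\sum_j |z_j|^2 \eta_j \in \h^*$. I would therefore study the linear map
\[
L \colon \R^{h+1} \to \R \oplus \h^*, \qquad b \mapsto \Big(\sum_j b_j,\; \sum_j b_j \eta_j\Big),
\]
and show it is an isomorphism. By Remark \ref{rmk:brilliant}, $\sum_j b_j \eta_j = 0$ forces $b = c\,\xi$; combined with $\sum_j b_j = 0$ this gives $c \sum_j \xi_j = 0$, and tallness ($\xi \in \Z^{h+1}_{\geq 0}$, $\xi \neq 0$, so $\sum_j \xi_j > 0$) forces $c = 0$. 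Since both sides have dimension $h+1$, $L$ is bijective, so each $|z_j|^2$ is a linear combination of $|z|^2$ and the components of $\Phi_H$.

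The main obstacle is the second use of tallness in the paragraph above: without it, $L$ can have a kernel and the individual $|z_j|^2$'s cannot all be recovered from $|z|^2$ together with $\Phi_Y$, so the generating set would have to be enlarged. The monomial factoring step is routine once Lemma \ref{lemma:defn_poly} is in hand, and the passage from $H$-invariants on $\h^\circ \times \C^{h+1}$ to $T$-invariants on $Y$ is purely formal.
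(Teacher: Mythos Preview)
Your proposal is correct and follows essentially the same route as the paper: reduce to $H$-invariants on $\C^{h+1}$, factor each invariant monomial $z^a\bar z^b$ (with $a-b=m\xi$) as a power of $P$ or $\overline P$ times a product of $|z_j|^2$'s, and then use tallness (via $\sum_j \xi_j \neq 0$, equivalently $(1,\dots,1)\notin\rho_*(\fh)$) to show that $|z|^2$ together with the components of $\Phi_H$ linearly span all the $|z_j|^2$. Your linear map $L$ is just the explicit formulation of the paper's observation that $(1,\dots,1)\notin\rho_*(\fh)$; the two arguments are dual to one another.
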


\begin{proof}
   By definition, there is a one-to-one correspondence between $T$-invariant polynomials on $Y$ and
   $H$-invariant polynomials on $\fh^\circ \times \C^{h+1}$. Let $H$ act on $\C^{h+1}$ via $\rho \colon H \hookrightarrow (S^1)^{h+1}$ and let $\Phi_H \colon \C^{h+1} \to \fh^*$ be the homogeneous moment map. Since  $\Phi_Y([t,\alpha,z]) = \alpha + \Phi_H(z)$ for all $[t,\alpha, z] \in
   Y$, it suffices to
   prove that the algebra of $H$-invariant polynomials on $\C^{h+1}$ is generated by the real and imaginary parts of the
   defining monomial $P \colon \C^{h+1} \to \C$, the map $z \mapsto |z|^2$, and the components of $\Phi_H$. Since the local model $Y$ is tall, Remark \ref{rmk:brilliant} implies that $(1,\ldots,1) \notin \rho_*(\fh)$. Hence, since $\Phi_H(z) = \frac{1}{2}\rho^*(|z_0|^2,\ldots, |z_h|^2)$, 
   the subalgebra of $H$-invariant polynomials on $\C^{h+1}$ generated by the map
   $z \mapsto |z|^2$ and the components of $\Phi_H$ is also generated by the functions $z \mapsto
     |z_i|^2$ for  $i =0,\ldots, h$. 

  Let  $P \colon \C^{h+1} \to \C$ be given by $P(z) = \prod_i z_i^{\xi_i}$.
  Consider a complex-valued polynomial of the form $Q(z) = \prod_i z_i^{j_i} \, \overline {z}_i^{k_i}$.
  Given $\lambda \in (S^1)^{h+1}$, $Q(\lambda z) = \prod_i \lambda_i^{j_i - k_i} Q(z)$.
  In particular, $Q$ is $H$-invariant exactly if $ \prod_i \lambda_i^{j_i-k_i} = 1$ for all $\lambda \in \rho(H) \varsubsetneq (S^1)^{h+1}$. Hence, by
  Lemma \ref{lemma:defn_poly}, $Q$ is $H$-invariant exactly if the
  vector $(j_0-k_0,j_1-k_1,\dots,j_h-k_h)$ is a multiple of $(\xi_0,\dots,\xi_h)$, i.e., 
  exactly if $Q$ is some product of $P$, its conjugate, and polynomials  of the form $z \mapsto |z_i|^2$. Since the the vector space of complex-valued polynomials on $\C^{h+1}$ decomposes as the direct sum of weight spaces that are spanned by elements of the form $z \mapsto \prod_i z_i^{j_i}\overline {z}_i^{k_i}$, it follows that $H$-invariant complex-valued
  polynomials are generated by $P$, its conjugate,
  and the polynomials $z \mapsto |z_i|^2$. The claim follows immediately.
\end{proof}

\begin{Remark}\label{rmk:other_basis}
  The proof of Lemma \ref{invariant_polys} provides an equivalent generating set for the algebra of $T$-invariant polynomials on a tall local model $Y$, namely the real and imaginary parts of the defining monomial $P \colon Y \to \C$, the maps $|z_i|^2 \colon Y \to \R$ that send $[t,\alpha,z]$ to $|z_i|^2$ for $i=0,\ldots, h$, and the components of the map $[t,\alpha,z] \mapsto \alpha$.
\end{Remark}

\section{The key proposition}\label{sec:key-proposition}
In this section, we prove that we can endow each non-trivial symplectic quotient  with the  structure  of a smooth oriented surface
so that the induced function $\overline{g}$ is Morse -- as long as most singular points satisfy a certain technical condition (see Proposition \ref{connected}).  
This eventually allows us to use Morse theory in two dimensions to obtain the desired results.

Let $\left(M,\omega, \ft \times \R, f:=\left(\Phi,g\right)\right)$ be an integrable
    system such that $\left(M,\omega,\Phi\right)$ is a complexity
  one $T$-space. As in the Introduction,  we denote the induced function on the reduced space $\Phi^{-1}(\Phi(p))/T$ by $\overline{g}$.
We need the following definition.

\begin{Definition}\label{critmodphi} 
  A point $p \in M$ is a {\bf critical point of $\boldsymbol{g}$ modulo $\boldsymbol{\Phi}$} if
  $D_p g$ vanishes on $\ker D_p\Phi$; otherwise, $p$ is a {\bf regular point of $\boldsymbol{g}$ modulo $\boldsymbol{\Phi}$.}
\end{Definition}
 Clearly, every critical point of $g$ modulo $\Phi$ is a singular point of $(M,\omega,\ft \times \R, f:=(\Phi,g))$. If $\Phi$ is proper then the converse holds over the interior of $\Phi(M)$; see Remark~\ref{rmk:crit} below.
 
 \begin{Remark}
    By linear algebra,  $(\ker D_p \Phi)^{\circ} = \{D_p\Phi^{\xi} \mid \xi \in \ft\}$.  Hence, $p$ is a so-called ``relative equilibrium of the Hamiltonian vector field $X_g$ with respect to the symmetry given by the $T$-action," 
i.e.,  there exists $\xi \in \ft$ such that $D_p g = D_p \Phi^{\xi}$,
exactly if $D_p g \in (\ker D_p \Phi)^{\circ}$,  which occurs  exactly
if $p$ is a critical point of $g$ modulo $\Phi$.
\end{Remark}

To understand the next lemma, recall that by Lemma~\ref{trivial}  there is a natural structure of smooth manifold on the reduced spacenear each non-exceptional orbit, with respect to which the induced function $\overline{g} \colon \MmodT \to \R$ is smooth.

\begin{Lemma}\label{criticalcritical}
    Let $\left(M,\omega, \ft \times \R, f:=\left(\Phi,g\right)\right)$ be an integrable
    system such that $\left(M,\omega,\Phi\right)$ is a complexity
  one $T$-space. If $p \in \Phi^{-1}(0)$ is not exceptional, then $[p]$ is a critical point of $\overline{g} \colon \MmodT \to \R$ exactly if $p$ is critical point of $g$ modulo $\Phi$. \end{Lemma}
\begin{proof}
 By the Marle-Guillemin-Sternberg local normal form theorem, we may assume that $M$ is a local  model $Y = T \times_H (\fh^\circ \times \C^{h+1})$, that $\Phi = \Phi_Y$, and that $p =
[1,0,0]$. Since $p$ is not exceptional, $p$ is tall. Let $P(z) = \prod_{i=0}^h z_i^{\xi_i}$ be the defining monomial of $Y$.  Since $\sum_i \xi_i = 1$ by Lemma \ref{lemma:criterion-exc},  we may further assume that $\C^{h+1} \simeq \C \times \C^{h}$ and $H \simeq (S^1)^h$, where $(S^1)^h$ acts on $\C^{h}$ in the standard fashion and trivially on $\C$. 

We identify $T_p Y$ with $\fh^{\perp} \times \fh^\circ \times \C^{h+1}$ via the map $\fh^{\perp} \times \fh^{\circ} \times \C^{h+1} \to Y$ given by $(\xi, \alpha,z) \mapsto [\exp(\xi),\alpha,z]$. The kernel of $D_p \Phi$ is $ \fh^{\perp}\times \{0\} \times \C^{h+1}$. Moreover, since $g$ is $T$-invariant and $H$ acts on $\C^h$ via an isomorphism with $(S^1)^h$, the functional $D_p g$ is $H$-invariant and vanishes on $\{0\} \times \{0\} \times (\{0\} \times \C^h)$. Hence, $p$ is a critical point of $g$ modulo $\Phi$ exactly if $D_p g$ vanishes on $\fh^{\perp} \times \{0\} \times  (\C \times \{0\}) $.
Since $\Phi^{-1}(0) = T \times_H(\{0\} \times (\C \times \{0\})$,
this holds exactly if $[p]$ is a critical point of $\overline{g}$.
\end{proof}

There is also a natural structure of a smooth manifold on the reduced space near each regular point of $g$ modulo $\Phi$, but in this case the induced function $\overline{g}$ is locally regular. More precisely, the following holds.

\begin{Lemma}\label{greg2}
Let $\left(M,\omega, \ft \times \R, f:=\left(\Phi,g\right)\right)$ be an integrable
    system such that $\left(M,\omega,\Phi\right)$ is a complexity
  one $T$-space. Let $p \in M$ be a regular point of $g$ modulo $\Phi$. Then there exists an open neighborhood $U_p$ of $[p]$ in the reduced space
$\Phi^{-1}(\Phi(p))/T$ and a map $\Psi_p \colon U_p \to \R^2$ satisfying:
\begin{enumerate}[label=(\alph*),ref=(\alph*)]
    \item \label{item:-4} $\Psi_p \colon U_p \to \Psi(U_p)$ is a diffeomorphism onto an open set; and
    \item\label{item:-2} $\overline{g} \circ \Psi_p^{-1} \colon \Psi_p(U_p) \to \R$ is a smooth function with no critical points.
\end{enumerate}
Moreover, $p$ is tall, has purely elliptic type, and the defining monomial of the local model for $p$ has degree $1$. \end{Lemma}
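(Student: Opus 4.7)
The plan is to work in the local model $Y = T \times_H \fh^\circ \times \C^{h+1}$ at $p$ (with $p$ corresponding to $[1,0,0]$) and show that the regularity hypothesis forces $Y$ to be tall with defining polynomial of degree $N = 1$, so that Lemma~\ref{lemma:hom} supplies the required smooth structure on $\YmodT$. Since $g$ Poisson-commutes with every component of $\Phi$ and $T$ is connected, $g$ is $T$-invariant and pulls back to a smooth $T$-invariant function on $Y$. Using the decomposition $T_p Y = (\ft/\fh) \oplus \fh^\circ \oplus \C^{h+1}$ and the formula $\Phi_Y = \alpha + \Phi_H(z)$ with $\Phi_H$ vanishing to first order at $z = 0$, we have $\ker D_p\Phi = (\ft/\fh) \oplus 0 \oplus \C^{h+1}$; $T$-invariance kills $D_p g$ on $\ft/\fh$, so the regularity hypothesis reduces to $D_p g|_{\C^{h+1}} \neq 0$, where $D_p g|_{\C^{h+1}}$ is an $H$-invariant real linear functional.

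Next, $H$ acts on the $i$-th $\C$-factor via the character $\rho_i$ of Lemma~\ref{lemma:defn_poly}, and a nontrivial $\rho_i$ makes the identity component of $H$ rotate that factor nontrivially, leaving no invariant linear functional on it; hence some $\rho_i$ must be trivial. Renumbering, $\rho_0$ is trivial, so $\rho \colon H \to (S^1)^{h+1}$ factors injectively through $(S^1)^h$ (dropping the zeroth coordinate), and a dimension count forces this factored map to be a Lie group isomorphism: $H$ is a connected torus of rank $h$, $\eta_0 = 0$, and $\eta_1,\dots,\eta_h$ form a basis of $\fh^*$. The short exact sequence in Lemma~\ref{lemma:defn_poly} then forces $\xi = (1,0,\dots,0)$, so $Y$ is tall with $P(z) = z_0$ and $N = 1$.

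Lemma~\ref{lemma:hom} (applied with $k = 0$) now produces a global diffeomorphism $\overline P \colon \YmodT \to \C$ with inverse $z \mapsto [1, 0, (z, 0, \dots, 0)]$. After identifying $\C \cong \R^2$, setting $\Psi_p := \overline P$ on a small open neighborhood $U_p$ of $[p]$ establishes~\ref{item:-4}. For~\ref{item:-2}, the function $\overline g \circ \Psi_p^{-1}(z) = g([1, 0, (z, 0, \dots, 0)])$ is manifestly smooth, and its differential at the origin equals $D_p g$ restricted to the $z_0$-direction, which is nonzero by construction; shrinking $U_p$ if necessary eliminates critical points.

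For the ``moreover'' claims, $N = 1$ is already in hand. To see that $p$ has purely elliptic type, I would compute the integrable system stabilizer at $p$ by solving $d\langle \Phi, \zeta\rangle|_p + s\, dg|_p = 0$ on $T_p Y$ componentwise in $(\zeta, s) \in \ft \times \R$: the $\C^{h+1}$-component forces $s = 0$ (since $D_p g|_{z_0} \neq 0$), and then the $\fh^\circ$-component forces $\zeta \in \fh$, giving $W_0 = \fh \oplus 0$. Linear symplectic reduction of the $T$-slice $\C^{h+1}$ by the $X_g$-direction (which lies in the $z_0$-plane since $dg$ is supported there) identifies the integrable system's symplectic slice at $p$ with $\C^h$, the $(z_1,\dots,z_h)$-directions; on this slice the $W_0 = \fh$-moment map is the $\fh$-component of $\Phi$, namely $\Phi_H(z) = \tfrac{1}{2} \sum_{i=1}^h |z_i|^2 \eta_i$, a product of $h$ elliptic blocks. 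The main obstacle is the Lie-theoretic bookkeeping in the second paragraph: the injectivity of $\rho$ combined with a dimension count is needed to rule out higher-degree defining polynomials and disconnected $H$.
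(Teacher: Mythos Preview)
Your proof is correct and follows essentially the same approach as the paper's: reduce to the local model, identify $\ker D_p\Phi$, show $N=1$, invoke Lemma~\ref{lemma:hom} for the diffeomorphism, and compute the linearized action to get purely elliptic type. The only cosmetic difference is that the paper first proves tallness by a separate contradiction argument and then deduces $N=1$ from the classification of degree-one $T$-invariant polynomials in Remark~\ref{rmk:other_basis}, whereas you obtain both tallness and $N=1$ simultaneously via the direct character/dimension analysis of $H$-invariant linear functionals on $\C^{h+1}$; these are the same argument in different packaging.
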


\begin{proof}
By the Marle-Guillemin-Sternberg local normal form theorem, we may assume that $M$ is a local  model $Y = T \times_H (\fh^\circ \times \C^{h+1})$, that $\Phi = \Phi_Y$, and that $p =
[1,0,0]$. First we show that $Y$ is tall. If it is short, then $\Phi^{-1}(\Phi(p)) = T \cdot p$. Since $\Phi$ is invariant under the Hamiltonian flow of $g$, this implies that $(T \times \R) \cdot p = T \cdot p$. Hence $p$ is a critical point of $g$ modulo $\Phi$, which contradicts our assumption.

We identify $T_p Y$ with $\fh^{\perp} \times \fh^\circ \times \C^{h+1}$ via the map $\fh^{\perp} \times \fh^{\circ} \times \C^{h+1} \to Y$ given by $(\xi, \alpha,z) \mapsto [\exp(\xi),\alpha,z]$. The kernel of $D_p \Phi$ is given by $ \fh^{\perp}\times \{0\} \times \C^{h+1}$.
By Remark \ref{rmk:other_basis}, if the degree $N$ of the defining monomial $P$ is greater than 1, then the only $T$-invariant polynomials of degree 1 on $Y$ are the components of the map $[t,\alpha,z]\mapsto \alpha$, while if $N=1$, the real and imaginary parts of $P$ are also $T$-invariant polynomials of degree 1. Since the derivative of the map $[t,\alpha,z]\mapsto \alpha$ at $p$ vanishes when restricted to $\ker D_p \Phi$, this implies  that $N=1$. If we assume,   without loss of generality,  that $P(z) = z_0$,
it also implies  that $D_p g$ restricts to a non-zero $H$-invariant linear function on $\{0\} \times \{0\} \times (\C \times \{0\})$. By Lemmas \ref{trivial}, $\overline{P}$ is a diffeomorphism; its inverse is given by $z \mapsto [1,0,(z,0)]$. Therefore $\overline{g} \circ \overline{P}^{-1} \colon \C \simeq \R^2 \to \R$ is equal to 
the restriction of $g$ to $\C \simeq \{1\} \times_H (\{0\} \times (\C \times \{0\})) \subseteq Y$. Hence, since the restriction of $D_p g$ to $\{0\} \times \{0\} \times (\C \times \{0\})$ is non-zero, there exists an open neighborhood $U_p$ of $[p]$ in $\YmodT$ such that the restriction of $\overline{g} \circ \overline{P}^{-1}$ to $\overline{P}(U_p)$ is smooth and has no critical points. Setting $\Psi_p \colon U_p \to \R^2$ to be the restriction of $\overline{P}$ to $U_p$, the first result follows.

Let $\mathcal{O}$ be the $(T \times \R)$-orbit through $p$. Since $D_p g$ restricts to a non-zero  function on $\C \times \{0\}$,  the vector space $\left(T_p \rorbit\right)^{\omega}/T_p\rorbit$ is isomorphic to $\C^h$ and the Lie algebra of the $(T \times \R)$-stabilizer of $p$ is $\fh \times \{0\} \varsubsetneq \ft \times \R$. Hence, the  linearized action on $\left(T_p \rorbit\right)^{\omega}/T_p\rorbit$ is isomorphic to the $H$-action on $\C^h$.
Finally, since $P(z) = z_0$,   $H$ acts on $\C^{h+1} \simeq \C \times \C^h$ through an isomorphism $H \simeq \{1\} \times (S^1)^h$.
Therefore, $p$ has purely elliptic type.
\end{proof}

\begin{Remark}\label{rmk:crit}
If $\Phi$ is proper, then every singular point of $(M,\omega,\ft \times \R, f:= (\Phi,g))$
over the interior of $\Phi(M)$ is a critical point of $g$ modulo $\Phi$. To see this, let $p$ be a regular point of $g$ modulo $\Phi$. By Lemma~\ref{greg2}, $p$ is tall and the degree of the defining monomial of the local model for $p$ is $1$.
Moreover, if $\Phi(p)$ is in the interior of $\Phi(M)$, then  the local model for $p$ has a surjective moment map because  $\Phi$ is open
 as a map to $\Phi(M)$ by Theorem~\ref{thm:connected}. Together, these facts imply that the $T$-stabilizer of $p$ is trivial.
Therefore, $p$ is a regular point of $\Phi$, and so also of $f$.
\end{Remark}

By Lemma \ref{greg2}, to construct the desired Morse function on a reduced space we need to impose local conditions near each critical point of $g$ modulo $\Phi$. More precisely, the following key proposition holds.


\begin{Proposition}\label{connected}
Let $\left(M,\omega, \ft \times \R, f:=\left(\Phi,g\right)\right)$ be an integrable system such that $\left(M,\omega,\Phi\right)$ is a complexity
  one $T$-space.  Fix $\beta \in \ft^*$.
Assume that for each critical point $p \in \Phi^{-1}(\beta)$ of $g$ modulo $\Phi$  
 there exists an open neighborhood $U_p$ of $[p]$ in  the reduced space $\Phi^{-1}(\beta)/T$  
and a map $\Psi_p \colon U_p \to \R^2$ satisfying:
\begin{enumerate}
\item $\Psi_p \colon U_p \to \Psi_p(U_p)$ is a homeomorphism onto an open set;
\item  $\Psi_p$ restricts to a diffeomorphism on $U_p \smallsetminus \{[p]\}$; and
\item $ \overline{g}\circ \Psi_p^{-1} \colon \Psi_p(U_p) \to \R$ is a Morse function.
\end{enumerate}
Then  $\Phi^{-1}(\beta)/T$ can be given the structure of a smooth oriented surface such that $\overline g \colon \Phi^{-1}(\beta)/T \to \R$ is a Morse function.
Moreover, $[p]$ is a critical point of $\overline{g}$ of index $\mu$ exactly if both $p$ is a critical point of $g$ modulo $\Phi$ and $\Psi_p([p])$ is a critical point of $\overline g \circ \Psi_p^{-1}$ of index $\mu$.
\end{Proposition}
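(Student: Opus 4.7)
The plan is to build a smooth oriented atlas on $\Phi^{-1}(\alpha)/T$ by combining two kinds of charts: at each regular point of $g$ modulo $\Phi$, take the chart supplied by Lemma \ref{greg2}; at each critical point $p$, take the hypothesized chart $\Psi_p$. I would then verify chart compatibility, show that $\overline g$ is Morse with the indicated correspondence of critical points and indices, and fix an orientation.

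A preliminary step is to show that the set $\Sigma \subseteq \Phi^{-1}(\alpha)/T$ consisting of orbits of critical points of $g$ modulo $\Phi$ is discrete. For each $[p] \in \Sigma$, the hypothesis that $\overline g \circ \Psi_p^{-1}$ is Morse implies that $\Psi_p([p])$ is an isolated critical point of this function; since $\Psi_p$ is a diffeomorphism on $U_p \smallsetminus \{[p]\}$, any nearby $[q] \in \Sigma \cap U_p$ with $[q] \ne [p]$ would correspond under $\Psi_p$ to a critical point of $\overline g \circ \Psi_p^{-1}$ distinct from $\Psi_p([p])$, which after shrinking $U_p$ is impossible. I may therefore assume that each $U_p$ contains no other orbit in $\Sigma$.

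Next, I would verify chart compatibility. Away from $\Sigma$, the reduced space carries the canonical smooth surface structure coming from the defining polynomials of the local models (via Lemmas \ref{lemma:hom} and \ref{greg2}). Both the regular-point charts, by Lemma \ref{greg2}\ref{item:-4}, and the critical-point charts on $U_p \smallsetminus \{[p]\}$, by hypothesis, are diffeomorphisms onto open subsets of $\R^2$ with respect to this canonical structure, so every transition map is smooth away from central critical points. Because of the shrinking step, the central point $[p]$ of a critical chart is covered by no other chart in the atlas, so there is nothing further to check at these points. The orientation is obtained by fixing the canonical orientation on the smooth locus (coming from symplectic reduction) and composing each $\Psi_p$ with $(x,y) \mapsto (x,-y)$ whenever it would otherwise be orientation-reversing.

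Finally, the Morse property and the correspondence of critical points follow immediately from the chart structure: at a regular point, $\overline g$ has no critical point by Lemma \ref{greg2}\ref{item:-2}; at $[p] \in \Sigma$, the chart $\Psi_p$ identifies $\overline g$ near $[p]$ with the Morse function $\overline g \circ \Psi_p^{-1}$ near $\Psi_p([p])$, so $[p]$ is a Morse critical point of $\overline g$ with the same Morse index. The main obstacle is the compatibility check: the critical charts $\Psi_p$ are only diffeomorphisms away from their central points, so one must ensure no two such charts need to be compared near each other's singular centers, which is exactly what the discreteness and shrinking accomplish.
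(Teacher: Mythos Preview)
Your proof is correct and follows essentially the same strategy as the paper: combine the charts from Lemma~\ref{greg2} at regular points with the hypothesized charts at critical points, arrange that no two charts need to be compared at a non-diffeomorphism center, check transition compatibility via the sheaf-diffeomorphism condition, and read off the Morse property and indices from the local models. The only cosmetic differences are that the paper achieves the separation of singular centers by passing to a locally finite refinement (using paracompactness of $\MmodT$) rather than by first arguing that $\Sigma$ is discrete, and it obtains the orientation by invoking \cite[Proposition~6.1]{KT1} rather than adjusting each $\Psi_p$ by hand.
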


\begin{proof}
Assume for simplicity that $\beta = 0$. By 
hypothesis and Lemma \ref{greg2}, for each $p \in \Phi^{-1}(0)$ there exists an open neighborhood $U_p$ of $[p]$ in the reduced space $\MmodT:=\Phi^{-1}(0)/T$
and a map $\Psi_p \colon U_p \to \R^2$ satisfying properties (1) -- (3) above;  
indeed, if $p$ is a regular point of $g$ modulo $\Phi$, then the stronger properties (a) and (b) of Lemma 3.3 also hold. In particular, $\Psi_p$ is a diffeomorphism on the subset of orbits of regular points of $g$ modulo $\Phi$. By shrinking $U_p$ if necessary, we may further assume that $\overline{g} \circ \Psi_p^{-1}$ is a regular function on $\Psi_p(U_p \smallsetminus \{[p]\})$.
Moreover, by the local normal form and Lemma \ref{trivial}, the set of exceptional orbits in $\MmodT$ is discrete. Therefore, by further shrinking $U_p$ if necessary,
we may assume that $U_p \smallsetminus \{[p]\}$ contains no exceptional orbits.
By Lemma~\ref{criticalcritical}, this implies that $U_p \smallsetminus \{[p]\}$ contains no critical points of $g$ modulo $\Phi$.

Hence we may choose a subset $\mathcal P \varsubsetneq \Phi^{-1}(0)$ such that $\{U_p\}_{p \in \mathcal P}$ is
a cover of $\MmodT$ and each orbit of critical points of $g$ modulo $\Phi$ is contained in exactly one open set. Therefore for all $p,p'\in \mathcal{P}$,
the transition function
$\Psi_{p} \circ \Psi_{p'}^{-1}$ is a diffeomorphism from 
$\Psi_{p'}( U_{p} \cap U_{p'})$ to $\Psi_{p}(U_{p} \cap U_{p'})$. 
Hence, the coordinate charts $\Psi_{p}$
endow the reduced space $\MmodT$ with the structure of a $2$-dimensional smooth manifold.
Moreover, with respect to this smooth structure,
$\overline{g}  \colon \MmodT \to \R$
is a Morse function. Additionally, $[p]$ is a critical point of $\overline{g}$ exactly if both $p$ is a critical point of $g$ modulo $\Phi$ and $\Psi_p([p])$ is a critical point of $\overline g \circ \Psi_p^{-1}$; in this case, the index of $[p]$ is the
index of $\Psi_p([p])$.
Finally, since every point in $\Phi^{-1}(0)$ is tall, $\MmodT$ is oriented by \cite[Proposition 6.1]{KT1}.
\end{proof}

The aim of Sections \ref{sec:purely_elliptic_type} and \ref{sec:non-degenerate} is to show that the hypotheses of Proposition \ref{connected} hold for non-degenerate singular points.

\section{Purely elliptic type}\label{sec:purely_elliptic_type}

Our goal in this section is to prove that tall 
critical points of $g$ modulo $\Phi$ that have purely elliptic type satisfy the hypotheses of Proposition \ref{connected} (see Proposition \ref{gcrit} below). An important ingredient is the local classification of such points (see Lemma \ref{need}). We begin by giving an important example.

\begin{Example}\label{exm:elliptic}
Fix $\ell \in \{0,1,\dots,n\}.$
Let $\omega_0 \in \Omega^2(  T^* (S^1)^{n-\ell} \times \C^\ell)$ be the sum of the pullbacks
of the canonical symplectic form on $T^*(S^1)^{n-\ell}$ and the standard symplectic form on $\C^{\ell}$.
Let $\Phi_0 \colon T^*(S^1)^{n - \ell} \times \C^\ell \simeq (S^1)^{n-\ell} \times \left(\R^{n-\ell}\right)^* \times \C^{\ell} \to (\R^n)^*$ be the moment map for the standard $(S^1)^n = (S^1)^{n-\ell} \times (S^1)^\ell$-action
on  $T^*(S^1)^{n - \ell} \times \C^\ell$ given by $\Phi_0(s,a,w)= (a,\frac{1}{2}|w|^2)$, where here $\frac{1}{2}|w|^2 := \frac{1}{2}(|w_1|^2,\ldots,  |w_{\ell}|^2)$. Then $(T^* (S^1)^{n - \ell} \times \C^\ell, \omega_0, \R^n , \Phi_0)$ is an integrable system; moreover,
$p = (s,a,w)$ is singular exactly if $\prod_i w_i = 0$, in which case $p$ has purely elliptic type.
\end{Example}

Next we define ``isomorphism" for the class of integrable systems that we consider. 

\begin{Definition}\label{def:isomorphism}
  Let $(M_i,\omega_i,\ft \times \R, f_i=(\Phi_i,g_i))$ be an integrable system such
  that $(M_i,\omega_i,\Phi_i)$ is a complexity one $T$-space for $i=1,2$.
  An {\bf isomorphism} from  $(M_1,\omega_1, \ft \times \R, f_1)$ to 
  $(M_2,\omega_2, \ft \times \R , f_2)$ is a symplectomorphism $\Psi\colon (M_1,\omega_1) \to (M_2,\omega_2)$ and a diffeomorphism\footnote{Let $X, Z$ be manifolds and let $A \subseteq X$. A map
    $\psi \colon A \to Z$ is {\bf smooth} if, for every point
    $a \in A$, there exists an open set $U_a \subseteq X$ and a smooth
    map $\psi_a \colon U_a \to Z$ extending $\psi$.} $\psi\colon f_1(M_1) \subseteq \ft^* \times \R \to f_2(M_2) \subseteq \ft^*
  \times \R$ of the form 
  $$ \psi(\beta, c) = (\tau(\beta),\psi_{\R}(\beta,c) )\text{ for } (\beta, c) \in f_1(M_1),$$
  where $\tau\colon \ft^* \to \ft^*$ is a translation, 
  such that $f_2 \circ \Psi = \psi \circ f_1$.
\end{Definition}

We can now give the local classification for critical points of $g$ modulo $\Phi$ that have purely elliptic type.

\begin{Lemma}\label{need}
Let $(M,\omega,\ft \times \R, f := (\Phi,g))$
be an integrable system
such that  $(M,\omega,\Phi)$ is a complexity one $T$-space.
    Let $p \in M$ be a critical point of $g$ modulo $\Phi$ that has purely elliptic type. 
 Let $Y = T \times_H (\fh^\circ \times \C^{h+1})$ be the local model for $p$, where $H$ acts on $\C^{h+1}$ via $\rho \colon H \to (S^1)^{h+1}$.
    Given $b \in \R^{h+1} \smallsetminus \rho_*(\fh)$, a neighborhood of $T \cdot p$ in $(M,\omega,\ft \times \R, f = (\Phi,g))$ is isomorphic to a neighborhood of $T \cdot [1,0,0]$
in  $(Y,\omega_Y, \ft \times \R, f_b = (\Phi_Y, g_b))$, where $g_b \colon Y \to \R$
is given by $g_b([t,\alpha,z]) = \sum_{i=0}^h |z_i|^2 b_i$.
\end{Lemma}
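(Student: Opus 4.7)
The plan is to reduce to the local model via Marle-Guillemin-Sternberg, then normalize $g$ by an equivariant Eliasson-type argument, and finally use the reparametrization freedom in Definition~\ref{def:isomorphism} to reach an arbitrary $b \in \R^{h+1} \ssminus \rho_*(\fh)$.

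First, I would apply Marle-Guillemin-Sternberg to the complexity one $T$-space $(M,\omega,\Phi)$ to identify a $T$-invariant neighborhood of $T \cdot p$ in $M$ with an open neighborhood of $T \cdot [1,0,0]$ in $(Y,\omega_Y,\Phi_Y)$, carrying $\Phi$ to $\Phi_Y$ and $p$ to $[1,0,0]$. After this identification, $g$ becomes a $T$-invariant smooth function on $Y$ near $[1,0,0]$, and the task reduces to producing a $T$-equivariant symplectomorphism $\Psi_0$ of $(Y,\omega_Y)$ preserving $\Phi_Y$, together with a smooth function $G$ defined near $0\in\ft^*$, such that $g \circ \Psi_0 = g_c + G \circ \Phi_Y$ for some specific $c \in \R^{h+1}\ssminus \rho_*(\fh)$.

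Second, I would identify the normalizing vector $c$ from the Hessian of $g$. By Remark~\ref{rmk:other_basis}, modulo functions of $\Phi_Y$, the quadratic part of $g$ at $[1,0,0]$ has the form $\sum_{i=0}^h c_i |z_i|^2$ (the remaining generators $\operatorname{Re}(P)$ and $\operatorname{Im}(P)$ are homogeneous of degree $N$ and contribute to the quadratic part only when $N\le 2$; in that case an analogous weight decomposition applies). The purely elliptic type of $p$ is equivalent to $c \notin \rho_*(\fh)$, since that is the precise condition ensuring that the linearized $(T\times\R)$-action on the symplectic slice at $p$ has a transverse elliptic block matching Definition~\ref{defn:non-degenerate}. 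Given this, the existence of $\Psi_0$ follows from an equivariant Eliasson-type theorem for purely elliptic singular points: one interpolates between $g$ and $g_c + G \circ \Phi_Y$ by a path of $T$-invariant functions with common Hessian at $p$ and integrates the associated time-dependent, $T$-invariant Hamiltonian vector fields. Both $T$-equivariance and preservation of $\Phi_Y$ follow automatically from the $T$-invariance of the interpolation. I expect this to be the main obstacle, since one must carefully choose the correction $G$ and control the Moser iteration in the presence of the $T$-action; in the purely elliptic case, however, the Hessian is a nondegenerate form transverse to $\Phi_Y$, which makes the argument tractable.

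Third, given any $b \in \R^{h+1} \ssminus \rho_*(\fh)$, the subspace $\R b + \rho_*(\fh) \subseteq \R^{h+1}$ equals the ambient space because $\rho_*$ is injective of rank $h$ and $b \notin \rho_*(\fh)$. Hence $c = \kappa b + \tfrac{1}{2}\rho_*(\mu)$ for some $\kappa \in \R^\times$ and $\mu \in \fh$. Since $\mu \in \fh$ annihilates $\fh^\circ$, the explicit form $\Phi_Y([t,\alpha,z]) = \alpha + \tfrac{1}{2}\sum \eta_i |z_i|^2$ gives $\langle \mu, \Phi_Y\rangle = \tfrac{1}{2}g_{\rho_*(\mu)}$ on all of $Y$, whence $g_c = \kappa\, g_b + \langle \mu, \Phi_Y\rangle$. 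Combining with the normal form from Step 2 yields
\[
(\Phi_Y, g_b) \circ \Psi_0 = \bigl(\Phi_Y,\, \kappa^{-1}(g - G \circ \Phi_Y - \langle \mu, \Phi_Y\rangle)\bigr) = \psi \circ (\Phi, g),
\]
where $\psi(\beta,\lambda) := \bigl(\beta,\, \kappa^{-1}(\lambda - G(\beta) - \langle \mu,\beta\rangle)\bigr)$. The pair $(\Psi_0,\psi)$, with $\tau = \id$, is the required isomorphism.
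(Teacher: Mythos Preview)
Your route differs from the paper's in an essential way. You apply Marle--Guillemin--Sternberg first to pin down $(Y,\omega_Y,\Phi_Y)$ and then seek a $T$-equivariant, $\Phi_Y$-preserving symplectomorphism normalizing $g$. The paper reverses the order: it applies Eliasson's normal form \cite{E} directly to the \emph{full} integrable system $(M,\omega,\ft\times\R,f)$ at the compact orbit $T\cdot p = (T\times\R)\cdot p$, obtaining a symplectomorphism onto $T^*(S^1)^{n-h-1}\times\C^{h+1}$ under which $f=\varphi\circ\mu$ for the standard toric moment map $\mu$. Because the first $n-1$ components of $f$ generate a torus action, $\varphi$ is forced to have the shape $(\Lambda^*+\beta,\varphi_\R)$; after linearizing $\varphi_\R$ and setting it to $(0,2b)$ via the reparametrization freedom, the paper then \emph{writes down explicitly} a $T$-equivariant symplectomorphism $\Psi\colon Y\to T^*(S^1)^{n-h-1}\times\C^{h+1}$ (a composite of three concrete maps) and checks that $\Phi\circ\Psi=\Phi_Y$ and $g\circ\Psi=g_b$. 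Thus Eliasson does all the analytic symplectic normalization at once, and what remains is finite-dimensional bookkeeping.

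Your Step~2 is where the gap lies. The ``equivariant Eliasson-type'' statement you invoke---a $\Phi_Y$-preserving symplectomorphism $\Psi_0$ with $g\circ\Psi_0=g_c+G\circ\Phi_Y$---is, modulo your (correct) third step, essentially the lemma itself, and your Moser sketch does not prove it: integrating Hamiltonian flows of an interpolating family yields symplectomorphisms, but nothing forces $g$ to land on $g_c$; Eliasson's actual argument is an iterated normal-form procedure solving cohomological equations, not a single Moser path. Your claim about the quadratic part is also incomplete when $N\le 2$: the generators $\Re(P),\Im(P)$ are then genuinely quadratic and cannot be absorbed into $\Phi_Y$, so before any Moser step you would need an $H$-equivariant linear symplectic normalization (a Williamson-type statement) that you do not supply. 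Your approach can be made to work by citing a genuine equivariant Eliasson theorem, but as written the central step is asserted rather than proved; the paper's route sidesteps the issue by letting the non-equivariant Eliasson theorem carry the analytic weight.
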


\begin{proof}
Since $p$ is a critical point of $g$ modulo $\Phi$, $\ker D_p f = \ker D_p  \Phi$, and so the $\left(T \times \R\right)$-orbit of $p$ is equal to the $T$-orbit of $p$. In particular, the rank of $D_p f$ is $n-h-1$ and
the $\left(T \times \R\right)$-orbit of $p$ is compact, where $h$ is the dimension of the stabilizer of $p$ for the $T$-action. By the local normal form for compact orbits of points that have purely elliptic type \cite{E}, we may assume that $M$ and the symplectic form are as in Example \ref{exm:elliptic}, where $\ell = h+1$, that $p = (1,0,0)$, and that there exists a diffeomorphism $\varphi \colon \left(\R^n\right)^*
\to \ft^* \times \R$  such that  $f = \varphi \circ \Phi_0$, where $\Phi_0$ is also as in Example \ref{exm:elliptic}.

Since $\varphi \circ \Phi_0 = f = (\Phi,g)$, there exist an
injective Lie group homomorphism $\Lambda\colon T \to (S^1)^n = (S^1)^{n-h-1} \times (S^1)^{h+1}$, $\beta \in \ft^*$,  and a smooth map $\varphi_\R \colon (\R^n)^* \to \R$ such that $\varphi = (\Lambda^* + \beta, \varphi_{\R})$, where $\Lambda^*$ is the dual of the Lie algebra homomorphism $\Lambda_* : \ft \to \R^n$. By replacing $f$ by its pre-composition with the translation by  $-\beta$, we may assume that $\beta = 0$. Hence, the composite $D_0\varphi \circ \varphi^{-1} \colon \ft^* \times \R \to \ft^* \times \R$ is the identity on $\ft^*$. Therefore, by replacing $f$ by the composition $D_0 \varphi \circ \varphi^{-1} \circ f = D_0 \varphi \circ \Phi_0$, we may assume that $\varphi_{\R}$ is linear.

Let $\Lambda_1$ and $\Lambda_2$ denote the
projections of $\Lambda$ onto $(S^1)^{n-h-1} $ and $ (S^1)^{h+1}$, and let $\Lambda_1^*$ and $\Lambda^*_2$ be the dual of the Lie algebra homomorphisms induced by $\Lambda_1$ and $\Lambda_2$ respectively. Then $\ker \, \Lambda_1 = H$ and $\Lambda_2|_H = \rho$. Hence, if $b \in \R^{h+1} \smallsetminus \rho_*(\fh)$,
then $(0,b) \in \R^n \smallsetminus \Lambda_*(\ft)$.
Thus there exists a linear isomorphism $L$ of $\ft^* \times \R$ that restricts to the identity on $\ft^*$ so that, by further replacing $f$ by the composition $L \circ f$, we may assume that $\varphi_{\R} = (0,2b)$. In particular, $g(s,a,w) = \sum_{i=0}^h b_i|w_i|^2$.

By construction, the map given by $$[t,\alpha,z] \mapsto [t,\alpha+\Phi_H(z),z]$$
\noindent
is a symplectomorphism between the local model $Y$ and the reduced space $(T \times \ft^* \times \C^{h+1})/\!/H$; see Remark \ref{rmk:symp-form}.

By a straightforward computation, the map $T \times \ft^* \times \C^{h+1} \to T \times \ft^* \times \C^{h+1}$ given by $ \textstyle (t,\beta,z) \mapsto \left(t,\beta - \Lambda^*_2 \left(\frac{1}{2}|z_0|^2,\ldots,\frac{1}{2}|z_{h}|^2\right), \Lambda_2(t)z\right)$
is a symplectomorphism. It induces a symplectomorphism between the reduced space $(T \times \ft^* \times \C^{h+1})/\!/H$ and $T/H \times \fh^{\circ} \times \C^{h+1}$ that is given by $$ \textstyle [t,\beta,z] \mapsto (tH,\beta - \Lambda^*_2 \left(\frac{1}{2}|z_0|^2,\ldots,\frac{1}{2}|z_{h}|^2\right), \Lambda_2(t)z). $$ 
\noindent
Here the symplectic form on $T/H \times \fh^{\circ} \times \C^{h+1}$ is the sum of the pullbacks of the canonical symplectic form on $T^* (T/H) \cong T/H \times \fh^{\circ}$ and of the standard symplectic form on $\C^{h+1}$.

By a dimension count, $\Lambda_1$ induces an isomorphism between $T/H$ and $(S^1)^{n-h-1}$. Hence it induces a symplectomorphism between $T/H \times \fh^{\circ} \times \C^{h+1}$ and $(S^1)^{n-h-1} \times \left(\R^{n-h-1}\right)^* \times \C^{h+1}$ that is given by 
$$(tH,\alpha,w) \mapsto (\Lambda_1(t), (\Lambda^*_1)^{-1}(\alpha),w).$$

Composing these maps, we obtain a symplectomorphism $\Psi\colon Y \to (S^1)^{n-h-1} \times \left(\R^{n-h-1}\right)^* \times \C^{h+1}$. Since $\Lambda_2|_H = \rho$, it is given by 
$$ \textstyle \Psi([t,\alpha,z]) = \left(\Lambda_1(t),(\Lambda_1^*)^{-1}\left(\alpha -    \mathrm{pr}_{\fh^\circ}\left(\Lambda^*_2\left(\frac{1}{2}|z_0|^2,\ldots,\frac{1}{2}|z_{h}|^2\right)\right)\right), \Lambda_2(t)z\right). $$
Here $\mathrm{pr}_{\fh^\circ}\colon \ft^* \to \fh^\circ$
denotes the projection induced by the fixed choice of metric on $\ft$. Clearly this map is $T$-equivariant and so $\Phi \circ \Psi$ is a moment map for the $T$-action on $Y$. Since $\Phi \circ \Psi(1,0,0) = 0$, it follows that $\Phi \circ \Psi$ is the homogeneous moment map $\Phi_Y$. Moreover, it is easy check that $g \circ \Psi = g_b$. By construction, the integrable system $(Y,\omega_Y,\ft \times \R, f\circ \Psi = (\Phi_Y, g_b))$ is isomorphic to the original one, as desired.
\end{proof}

We are finally ready to prove the main result of this section.

\begin{Proposition}\label{gcrit}
Let $(M,\omega,\ft \times \R, f := (\Phi,g))$
be an integrable system
such that  $(M,\omega,\Phi)$ is a complexity one $T$-space.
Let $p \in \Phi\inv(0) \cap g^{-1}(0)$ be a tall 
critical point
of $g$ modulo $\Phi$ that has purely elliptic type. After possibly replacing $g$ by $-g$, there exist an open neighborhood $U$ of $[p]$ in $\MmodT$
and a map $\Psi \colon U \to \R^2$ taking $[p]$ to $(0,0)$ satisfying:
\begin{enumerate}
\item $\Psi \colon U \to \Psi(U)$ is a homeomorphism onto an open set;
\item $\Psi$ restricts to a diffeomorphism on $U \smallsetminus \{[p]\}$; and
\item $(\overline{g} \circ \Psi^{-1})(x,y)= x^2 + y^2$ for all $(x,y) \in \Psi(U)$.
\end{enumerate}
\end{Proposition}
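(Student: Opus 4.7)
The plan is to reduce to an explicit local model, compute $\overline{g}$ in closed form there, and then change coordinates on the reduced space by a radial power map. Specifically, by Lemma~\ref{need}, a $T$-invariant neighborhood of $T\cdot p$ in $M$ is isomorphic to such a neighborhood of $T\cdot [1,0,0]$ in some tall local model $(Y, \omega_Y, \ft\times\R, f_b = (\Phi_Y, g_b))$, where $g_b([t,\alpha,z]) = \sum_{i=0}^{h} b_i|z_i|^2$ for some $b \in \R^{h+1}\smallsetminus \rho_*(\fh)$. It therefore suffices to construct $U$ and $\Psi$ in $\YmodT$, taking $U$ to be the (open) image of this identified neighborhood in $\YmodT$.

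Next, I would compute $\overline{g}$ explicitly in terms of the defining polynomial $\overline{P}$. A point $[t,\alpha,z]$ lies in $\Phi_Y^{-1}(0)$ exactly when $\alpha = 0$ and $\Phi_H(z)=0$, and Remark~\ref{rmk:brilliant} then forces $|z_j|^2 = c\,\xi_j$ for some $c\geq 0$. Writing $B := \sum_j b_j \xi_j$ and $N := \sum_j \xi_j$, this gives $g_b([t,0,z]) = cB = (B/N)|z|^2$, and Lemma~\ref{lemma:constant} yields
$$\overline{g} \;=\; C\,\bigl|\overline{P}\bigr|^{2/N} \qquad \text{with}\qquad C := B\bigl(\textstyle\prod_j \xi_j^{\xi_j}\bigr)^{-1/N}.$$
Since $b \notin \rho_*(\fh)$, Remark~\ref{rmk:brilliant} also gives $B\neq 0$, and $\bigl(\prod_j \xi_j^{\xi_j}\bigr)^{-1/N} > 0$, so after replacing $g$ by $-g$ if necessary I may assume $C>0$.

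Finally, I would define the radial map $\varphi\colon \C \to \C$ by $\varphi(R e^{i\theta}) := \sqrt{C}\,R^{1/N} e^{i\theta}$; this is a homeomorphism of $\C$ that restricts to a diffeomorphism of $\C\smallsetminus\{0\}$, sends $0$ to $0$, and satisfies $|\varphi(w)|^2 = C|w|^{2/N}$. Setting $\Psi := \varphi \circ \overline{P}$ on $U$, Lemma~\ref{trivial} implies that $\Psi$ is a homeomorphism onto an open subset of $\C$, restricts to a diffeomorphism on $U\smallsetminus\{[p]\}$, and sends $[p]$ to $(0,0)$. The identity
$$\overline{g}\circ \Psi^{-1}(x,y) \;=\; C\,|\varphi^{-1}(x,y)|^{2/N} \;=\; C\cdot\bigl((x^2+y^2)^{1/2}/\sqrt{C}\bigr)^{2} \;=\; x^2+y^2$$
then verifies property~(3), completing the proof.

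The only real subtlety is that $\overline{P}$ itself fails to be a diffeomorphism at $[p]$ whenever $N>1$, which is exactly why condition~(2) permits non-smoothness at $[p]$; the point of introducing the fractional-power map $\varphi$ is precisely to absorb the exponent $2/N$ and convert the \emph{a priori} non-smooth function $C|\overline{P}|^{2/N}$ into the smooth model quadratic $x^2+y^2$ in the new coordinates.
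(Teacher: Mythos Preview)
Your proof is correct and follows the same approach as the paper: reduce to the local model via Lemma~\ref{need}, express $\overline{g_b}$ as $C\,|\overline P|^{2/N}$ using Lemma~\ref{lemma:constant}, and post-compose $\overline P$ with the radial $N$-th-root map. The only visible difference is cosmetic---the paper chooses $b=(1,\dots,1)$ (permissible since $(1,\dots,1)\notin\rho_*(\fh)$ by Remark~\ref{rmk:brilliant}) so that $g_b=|z|^2$, whereas you keep a general $b$ and track the constant $B=\sum_j b_j\xi_j$.

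One small point worth making explicit: the isomorphism produced by Lemma~\ref{need} (in the sense of Definition~\ref{def:isomorphism}) intertwines $f$ and $f_b$ only through a target diffeomorphism $\psi$, so on the reduced space $\overline g$ corresponds to $h\circ\overline{g_b}$ for some diffeomorphism $h$ of $\R$ with $h(0)=0$, not to $\overline{g_b}$ itself. Thus your sentence ``it therefore suffices to construct $U$ and $\Psi$ in $\YmodT$'' needs one more step: a radial rescaling $(x,y)\mapsto\sigma(x^2+y^2)\,(x,y)$ with $\sigma(s)=\sqrt{h(s)/s}$ (smooth since $h'(0)\neq 0$, positive after possibly replacing $g$ by $-g$) converts $h(x^2+y^2)$ into $x^2+y^2$. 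This is exactly what the paper is invoking when it writes ``by Lemma~\ref{need} \emph{and the Morse Lemma}.''
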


\begin{proof}
Since $p$ has tall local model $Y = T \times_H (\fh^{\circ} \times \C^{h+1})$, Remark \ref{rmk:brilliant} implies that $(1,\ldots,
1) \notin \rho_*(\fh)$, where $\rho \colon H \to (S^1)^{h+1}$ is the symplectic slice representation at $p$. Hence, by Lemma \ref{need} and the Morse Lemma, after possibly replacing $g$ by $-g$, we may assume that $M$ is a local model $Y = T \times_H (\fh^\circ \times \C^{h+1})$, that $\Phi = \Phi_Y$, that $g([t,\alpha,z]) = |z|^2$, and that $p = [1,0,0]$. Let $N$ be the degree of the defining monomial $P\colon \C^{h+1} \to \C$.
By Lemma \ref{trivial}, the map $\overline{P} \colon \YmodT \to \C$ is a
homeomorphism; moreover, it
restricts to a diffeomorphism from $(\YmodT)\smallsetminus \{ [1,0,0] \} $ to  $\C \smallsetminus \{0\}$.
By Lemma \ref{lemma:constant}, there exists a positive real number $C$ such that $g([t,\alpha,z]) = C |P(z)|^\frac{2}{N}$ for all $[t,\alpha,z] \in \Phi_Y^{-1}(0)$.
Let $S \colon \C \to \C$ be the homeomorphism that, in polar coordinates, 
takes $(r,\theta)$ to $(\sqrt{C} \, r^{\frac{1}{N}}, \theta)$; it restricts
to a diffeomorphism $\C \smallsetminus \{0\} \to \C \smallsetminus \{0\}$.
Define $\Psi \colon \YmodT \to \C \simeq \R^2$ by $\Psi([t,\alpha,z]) =
S(P(z))$. Since 
$$ |\Psi([t,\alpha,z])|^2 = C|P(z)|^{\frac{2}{N}} = \overline{g}([t,\alpha,z]), $$
\noindent
for all $[t,\alpha,z] \in \YmodT$, the map $\Psi$ satisfies properties (1) -- (3).
\end{proof}

\begin{Remark}
    We observe that, by Lemma \ref{trivial} (and \ref{lemma:criterion-exc}), if $N = 1$ the map $\Psi$ constructed in the above proof is a diffeomorphism.
\end{Remark}

\section{The semitoric case}\label{sec:semitoric}
The goal of this section is to prove that focus-focus points satisfy the hypotheses of Proposition \ref{connected} (see Lemma \ref{focusfocus} below). This allows us to provide a short and direct new proof of Vu Ngoc's theorem that the fibers of semitoric systems are connected \cite{VN} -- see Theorem \ref{thm:semitoric}. In both proofs, the key arguments follow from Morse theory; the most significant difference is that we construct a new smooth structure on the reduced spaces, while he uses the natural smooth structure on $M$. 

\begin{Definition}\label{semitoric}
  An integrable system $\left(M,\omega,\ft
    \times \R, f=\left(\Phi,g\right)\right)$ is {\bf
    semitoric} if 
  \begin{itemize}
  \item $\left(M,\omega,\Phi\right)$ is a 4-dimensional complexity one $T$-space, and 
  \item every singular point of $\left(M,\omega,\ft \times \R,f\right)$ is non-degenerate with no hyperbolic blocks.
  \end{itemize}
\end{Definition}

Note that, by definition, every singular point of a semitoric system either has purely elliptic type or is focus-focus, i.e., it has one focus-focus block. 
We begin with an important example of a semitoric system containing one focus-focus point.

\begin{Example}\label{example:ff_model}
Let $Y = \C^2$, let $\omega_Y$ be the standard symplectic form, and set $\Phi_Y(z_1,z_2) := \frac{1}{2}\left(|z_1|^2-|z_2|^2\right)$ and $g_Y(z_1,z_2):=\Im(z_1 z_2)= \Im(P(z_1,z_2))$, where $P \colon Y \to \C$ is the defining monomial for the $S^1$-action induced by $\Phi_Y$ (cf. Example \ref{ex2}).
\end{Example}

Locally, near a focus-focus point, every semitoric system is modeled on Example \ref{example:ff_model}.

\begin{Proposition}\label{lemma:focus-focus}
  Let $(M,\omega,\ft \times \R, f = (\Phi,g))$ be an integrable system
  such that  $(M,\omega,\Phi)$ is a 4-dimensional complexity one $T$-space.
  Assume that $p \in M$ is focus-focus. 
  After identifying $T$ with $S^1$, a neighborhood of $p$ is isomorphic to a neighborhood of the origin in the semitoric system $(Y,\omega_Y,\R \times \R, f_Y)$ described above.
\end{Proposition}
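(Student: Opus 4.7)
The plan is to combine the focus-focus Eliasson normal form with the rigidity imposed by the Hamiltonian circle action generated by $\Phi$. The main obstacle is invoking the focus-focus Eliasson normal form, which is a deep local result; once that is in hand, the rest is essentially the implicit function theorem.

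First, I would apply the focus-focus Eliasson normal form (as completed by Vu Ngoc and Wacheux, building on Eliasson's original work). Since $(M,\omega)$ is four-dimensional and $p$ has a single focus-focus block, this produces symplectic coordinates $(z_1,z_2) \in \C^2$ centered at $p$ in which $\omega$ agrees with $\omega_Y$, together with a smooth germ $F = (F_1,F_2)\colon (\R^2,0) \to \R^2$ satisfying $(\Phi,g) = F\circ(q_1,q_2)$, where $q_1(z_1,z_2) = \tfrac{1}{2}(|z_1|^2-|z_2|^2)$ and $q_2(z_1,z_2) = \Im(z_1 z_2)$.

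Second, I would pin down $F_1$ using that $\Phi$ is the moment map for an effective Hamiltonian circle action. Because $q_1,q_2$ Poisson commute, $X_{q_1}$ and $X_{q_2}$ commute, so on each joint level set the flow of $\Phi$ decomposes as $\phi^{\Phi}_t = \phi^{q_1}_{t\,\partial_{q_1}F_1}\circ \phi^{q_2}_{t\,\partial_{q_2}F_1}$. Now $X_{q_1}$ generates the $2\pi$-periodic action $\alpha\cdot(z_1,z_2)=(\alpha z_1,\alpha^{-1}z_2)$, while a direct computation shows $X_{q_2}$ generates the hyperbolic flow $\dot z_1 = \bar z_2,\; \dot z_2 = \bar z_1$, whose orbits off the origin are non-periodic. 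Periodicity of $\phi^{\Phi}_t$ on every nearby regular level set therefore forces $\partial_{q_2}F_1 \equiv 0$, so $F_1 = F_1(q_1)$. The $2\pi$-periodicity and effectiveness of the resulting $S^1$-action then force $F_1'$ to be a constant of absolute value $1$, whence $F_1(q_1) = \pm q_1 + c$ for some constant $c$. Precomposing the Eliasson chart with the symplectomorphism $(z_1,z_2)\mapsto(z_2,z_1)$ if necessary (which flips the sign of $q_1$ and preserves $q_2$), we may assume $F_1(q_1) = q_1 + c$; this amounts to fixing the identification $T\simeq S^1$.

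Third, since $f$ has rank $2$ on a dense open subset and $(q_1,q_2)$ has rank $2$ off the origin, $F$ must be a local diffeomorphism at $0$. Combined with $F_1(q_1)=q_1+c$, this forces $\partial_{q_2}F_2(0)\neq 0$, and the implicit function theorem produces a smooth germ $G$ with $F_2(q_1,G(q_1,\lambda))=\lambda$. Taking $\Psi$ to be the Eliasson chart and setting $\psi(\beta,\lambda):=(\beta-c,\,G(\beta-c,\lambda))$ produces the required isomorphism: $\Psi$ is a symplectomorphism onto a neighborhood of $0\in Y$; $\psi$ has the prescribed form, with first component the translation $\tau(\beta)=\beta-c$, and is a local diffeomorphism since its Jacobian determinant at $f(p)$ equals $1/\partial_{q_2}F_2(0)\neq 0$; and by construction $(\Phi_Y,g_Y)\circ\Psi = (q_1,q_2) = \psi\circ(\Phi,g)$.
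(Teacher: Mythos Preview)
Your argument is correct and follows essentially the same route as the paper: invoke the focus--focus local normal form to write $f=F\circ(q_1,q_2)$, then exploit the effective $S^1$-action to force $F_1(q_1,q_2)=\pm q_1+c$, and absorb the sign via a symplectomorphism of the model. The paper outsources the second step to \cite[Proposition~3.9]{HSS} (observing that the fibers of $f_Y$ are noncompact, so the only periodic Hamiltonians factoring through $f_Y$ are $\pm\Phi_Y+\text{const}$), whereas you supply a direct flow argument; the paper handles the sign by noting abstractly that $(\Phi_Y,g_Y)$ and $(-\Phi_Y,g_Y)$ are isomorphic, which is exactly your swap $(z_1,z_2)\mapsto(z_2,z_1)$ made explicit.

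One small comment: in your third paragraph the sentence ``since $f$ has rank $2$ on a dense open subset and $(q_1,q_2)$ has rank $2$ off the origin, $F$ must be a local diffeomorphism at $0$'' is not needed and, as stated, does not by itself give invertibility of $DF$ \emph{at} the origin. The normal form of \cite{VW,C} already produces $F$ as a local diffeomorphism; alternatively, once you know $F_1=q_1+c$, the Jacobian of $F$ at $0$ is lower triangular with diagonal $(1,\partial_{q_2}F_2(0))$, and $\partial_{q_2}F_2(0)\neq 0$ follows directly from that normal form. Either way, the implicit-function step then goes through exactly as you wrote.
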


\begin{proof}
Identify $T$ with $S^1$. By the local normal form for focus-focus points, we may assume that the integrable system is  $(Y,\omega_Y,\R \times \R, \varphi \circ f_Y)$ and $p$ is the origin, where $(Y,\omega_Y,\R \times \R, f_Y = (\Phi_Y,g_Y))$ is as in Example \ref{example:ff_model} and $\varphi \colon \R \times \R \to \R \times \R$ is a diffeomorphism \cite{C,VW}. Since by assumption the first component of $\varphi \circ f_Y$ induces an effective $S^1$-action on $Y$, and since the fibers of $f_Y$ are not compact, up to a translation $\Phi_Y$ and the first component of $\varphi \circ f_Y$ are equal up to sign (cf. \cite[Proposition 3.9]{HSS}). Since the integrable systems $(Y,\omega_Y,\R \times \R, f_Y = (\Phi_Y,g_Y))$ and $(Y,\omega_Y,\R \times \R, (-\Phi_Y,g_Y))$ are isomorphic, the claim follows.
\end{proof}

Hence, focus-focus points satisfy the hypotheses of Proposition \ref{connected}.

\begin{Lemma}\label{focusfocus}
Let $(M,\omega,\ft \times \R, f = (\Phi,g))$ be an integrable system such that  $(M,\omega,\Phi)$ is a 4-dimensional complexity one $T$-space. Assume that $p \in \Phi\inv(0) \cap g\inv(0)$ is focus-focus. 
Then
there exist an open neighborhood $U$ of $[p] \in \MmodT $
and a map $\Psi \colon U \to \R^2$ taking $[p]$ to $(0,0)$ satisfying:
\begin{enumerate}
\item $\Psi \colon U \to \Psi(U)$ is a homeomorphism onto an open set;
\item $\Psi$ restricts to a diffeomorphism on $U \smallsetminus \{[p]\}$; and
\item $(\overline{g} \circ \Psi^{-1})(x,y) = y$ for all $(x,y) \in \Psi(U)$.
\end{enumerate}
\end{Lemma}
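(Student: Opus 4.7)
The plan is to apply the focus--focus local normal form (Proposition \ref{lemma:focus-focus}) and then read off $\Psi$ explicitly from the defining polynomial $P(z_1,z_2)=z_1z_2$ of the standard tall model.

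First I would invoke Proposition \ref{lemma:focus-focus} to obtain an isomorphism, with underlying symplectomorphism $\Theta$, from a $T$-invariant neighborhood of $p$ onto a neighborhood $W$ of the origin in the semitoric system $(Y,\omega_Y,\R\times\R,f_Y=(\Phi_Y,g_Y))$ of Example \ref{example:ff_model}, with $\Theta(p)=0$. Unpacking Definition \ref{def:isomorphism}, and using $f(p)=0=f_Y(0)$, the translation in $\ft^*$ must be trivial and the accompanying diffeomorphism on the $\R$-factor must fix the origin, so on the neighborhood $\Phi_Y\circ\Theta=\Phi$ and $g_Y\circ\Theta=\psi_{\R}(\Phi,g)$ for some local diffeomorphism $\psi_{\R}$ of $\R\times\R$ fixing $(0,0)$. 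Passing to the reduced space at level $0$, $\Theta$ induces a homeomorphism $\overline\Theta$ from a neighborhood $U$ of $[p]$ in $\MmodT$ onto a neighborhood of $[0]$ in $Y/\!/T$, and $\overline{g_Y}\circ\overline\Theta=\psi_{\R}(0,\overline g)$.

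Next I would use the tall complexity one model of Example \ref{ex2}: the defining polynomial on $Y$ is $P(z_1,z_2)=z_1z_2$, so by Lemma \ref{trivial} the map $\overline P\colon Y/\!/T\to\C$ is a homeomorphism that restricts to a diffeomorphism off $[0]$. Since $g_Y=\Im P$ is already $T$-invariant, it descends to $\Im\circ\overline P$. Consequently the composition $\Phi_0:=\overline P\circ\overline\Theta\colon U\to\C$ is a homeomorphism onto an open neighborhood of $0$, a diffeomorphism away from $[p]$, and its imaginary part is $\psi_{\R}(0,\overline g)$.

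Finally I would set $\Psi([q]):=\bigl(\Re(\Phi_0([q])),\,\overline g([q])\bigr)$. Identifying $\C\cong\R^2$, the map $\Psi$ is the composition of $\Phi_0$ with $(a,b)\mapsto(a,\psi_{\R}(0,\cdot)^{-1}(b))$, which is a local diffeomorphism of $\R^2$ fixing $(0,0)$ because $\psi_{\R}(0,\cdot)$ is a local diffeomorphism of $\R$ fixing $0$. Therefore $\Psi$ inherits the homeomorphism/diffeomorphism properties of $\Phi_0$ required by (1) and (2), and $(\overline g\circ\Psi^{-1})(x,y)=y$ by construction, giving (3). There is no substantive obstacle: the heavy lifting is already done by the focus--focus normal form and by Lemma \ref{trivial}; the only real care needed is to isolate the extra one-dimensional diffeomorphism $\psi_{\R}(0,\cdot)$ that Definition \ref{def:isomorphism} permits on the $\R$-factor, and to absorb it into the final change of coordinates on the reduced space.
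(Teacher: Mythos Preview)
Your proposal is correct and follows essentially the same route as the paper: invoke the focus--focus normal form (Proposition~\ref{lemma:focus-focus}) to pass to the model of Example~\ref{example:ff_model}, then take $\Psi$ to be the defining polynomial $\overline P(z_1,z_2)=z_1z_2$ and appeal to Lemma~\ref{trivial}. If anything you are more careful than the paper: you explicitly track the diffeomorphism $\psi_{\R}$ on the $\R$-factor that Definition~\ref{def:isomorphism} permits and absorb it into the final coordinate change, whereas the paper's ``we may assume the integrable system is $(Y,\omega_Y,\R\times\R,f_Y)$'' silently identifies $g$ with $g_Y$.
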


\begin{proof}
By Proposition \ref{lemma:focus-focus} we may assume that the integrable system is the system $(Y,\omega_Y,\R \times \R, f_Y:=(\Phi_Y,g_Y))$ described in Example \ref{example:ff_model}, and that $p$ is the origin. Set $\Psi[z_1,z_2]:= z_1z_2 = P(z_1,z_2)$. Then the claim follows immediately from Lemma \ref{trivial}.
\end{proof}

We are now almost ready to give our proof of Vu Ngoc's theorem on the connectedness of the fibers of semitoric systems; however,
we also need the following important result, which is proved in \cite{At,GS2,LMTW}.

\begin{Theorem}\label{thm:connected}
  Let $T$ act on a connected symplectic manifold $(M,\omega)$ with moment map $\Phi \colon M \to \ft^*$. If $\Phi$ is proper, then every fiber of $\Phi$ is connected and $\Phi$ is open as a map to $\Phi(M)$. 
\end{Theorem}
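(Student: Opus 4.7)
The plan is to prove Theorem~\ref{thm:connected} by induction on $n := \dim T$, following the classical argument of Atiyah and of Guillemin--Sternberg. The base case is a direct Morse-theoretic statement, and the inductive step reduces to it after splitting off a circle factor and reducing symplectically.

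\smallskip

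\noindent\textbf{Base case ($n=1$).} Here $\Phi \colon M \to \R$ is a proper smooth function whose critical set coincides with the fixed-point set $M^{S^1}$. First I would verify that $\Phi$ is a Morse--Bott function with all critical submanifolds of \emph{even} index and coindex. This is a local computation at each fixed point $p$: by the equivariant Darboux theorem there are coordinates $(z_1,\dots,z_m) \in \C^m$ on $T_p M$ in which $S^1$ acts with integer weights $k_1,\dots,k_m$ and
\[
 \Phi = \Phi(p) + \tfrac{1}{2}\sum_j k_j |z_j|^2,
\]
so the Hessian of $\Phi$ normal to $M^{S^1}$ is non-degenerate, with real index equal to twice the number of negative weights. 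Connectedness of each level set then follows from the higher-dimensional analogue of Lemma~\ref{Morse2}: any proper Morse--Bott function on a connected manifold whose critical submanifolds all have index and coindex different from one has connected fibers. This is the standard argument that the subset of values with connected fiber is open (by the implicit function theorem), nonempty (near min or max), and closed (properness plus the Morse--Bott local model at a critical value cannot disconnect the nearby regular fibers once the transverse index differs from~$1$).

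\smallskip

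\noindent\textbf{Inductive step ($n>1$).} Assume the statement for tori of dimension less than $n$. Fix a splitting $T \cong T' \times S^1$ with $\dim T' = n-1$, inducing $\Phi = (\Phi', \varphi)$; for $\alpha = (\alpha', c)$ we have $\Phi^{-1}(\alpha) = \Phi'^{-1}(\alpha') \cap \varphi^{-1}(c)$. The inductive hypothesis, applied to the proper moment map $\Phi'$, gives that $\Phi'^{-1}(\alpha')$ is connected. The residual $S^1$-action descends to the (a priori singular) reduced space $\Phi'^{-1}(\alpha')/T'$ with induced ``moment map'' $\overline{\varphi}$; the plan is to show that $\overline{\varphi}$ is a stratified Morse--Bott function whose critical strata again all have even normal index, and then rerun the Morse-theoretic argument of the base case in this stratified setting (using Sjamaar--Lerman). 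Connectedness of the fibers of $\overline{\varphi}$ pulls back along the surjection $\Phi'^{-1}(\alpha') \to \Phi'^{-1}(\alpha')/T'$ with connected $T'$-orbits to connectedness of $\Phi^{-1}(\alpha)$.

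\smallskip

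\noindent\textbf{Main obstacle.} The genuine difficulty is that $\Phi'^{-1}(\alpha')/T'$ need not be smooth at arbitrary $\alpha'$, so one cannot apply Morse theory naively in the inductive step. I would circumvent this in the standard two-step way: first prove connectedness for regular values of $\Phi'$, where the reduced space is a bona fide symplectic manifold and the base-case Morse argument applies verbatim, and then extend to arbitrary $\alpha'$ by approximation, using properness of $\Phi$ (and hence compactness of the level sets in question) to pass to the limit. An alternative that avoids the quotient altogether is to argue directly on $M$ using the Kirwan-type function $\tfrac{1}{2}|\Phi - \alpha|^2$, whose minimum locus is $\Phi^{-1}(\alpha)$ and which is again Morse--Bott with critical submanifolds of even index determined by weights of subcircles of $T$. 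Either route reduces the problem to the base case and completes the induction.
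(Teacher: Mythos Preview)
The paper does not give its own proof of Theorem~\ref{thm:connected}; it simply cites it as a known result, proved in \cite{At,GS2,LMTW}. So there is nothing to compare your argument against beyond the literature itself.

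Your outline is the classical Atiyah argument and is essentially correct. Two small remarks. First, the Morse--Bott input you describe in the base case (``proper Morse--Bott with no index or coindex one critical manifolds implies connected fibers'') is exactly Proposition~\ref{prop:connected} of this paper, so you can cite it directly rather than invoking a ``higher-dimensional analogue of Lemma~\ref{Morse2}''. Second, in the inductive step the approximation route (regular $\alpha'$ first, then limit) does work but needs a line of justification: properness of $\Phi$ gives that $\Phi^{-1}(\alpha)$ is compact and equals $\bigcap_{\epsilon>0}\Phi^{-1}(B_\epsilon(\alpha))$, and a nested intersection of compact connected sets is connected. The Kirwan-function alternative $\tfrac12|\Phi-\alpha|^2$ is cleaner in that it avoids the quotient entirely, but then you must check that this function is minimally degenerate (or Morse--Bott after a perturbation) with the right index parity; this is standard but not free. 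Either way your sketch is sound.
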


We also need the following proposition. Atiyah proved the compact case (see \cite{At}), and used it to prove the theorem above;
it was later extended to the proper case; see \cite[Proposition 4.5]{PRV}.  (For completeness, we provide a variation on the arguments in \cite{PRV} in Section \ref{sec:proofs}.)

\begin{Proposition}\label{prop:connected}
    Let $N$ be a connected  manifold and let $\overline{g} \colon N \to \R$ be a proper Morse-Bott function. If no critical point of $\overline{g}$ has index or coindex one, then the fiber $\overline{g}^{-1}(c)$ is connected for all $c \in \R$.
\end{Proposition}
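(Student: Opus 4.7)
My plan is to reduce the proposition to a combinatorial claim about the Reeb graph of $\overline{g}$. Let $G$ be the quotient of $N$ by the equivalence relation that identifies two points whenever they lie in the same connected component of the same level set of $\overline{g}$, and let $\widetilde{g} \colon G \to \R$ be the continuous function induced by $\overline{g}$. Properness and the Morse-Bott condition endow $G$ with the structure of a locally finite $1$-dimensional CW complex on which $\widetilde{g}$ is strictly monotonic along each edge; moreover $G$ is connected because $N$ is. The conclusion that $\overline{g}^{-1}(c)$ is connected for every $c$ is then equivalent to the statement that $\widetilde{g}^{-1}(c)$ is at most a single point for every $c$, i.e.\ that $G$ is homeomorphic via $\widetilde{g}$ to a (possibly infinite) interval in $\R$.

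The heart of the argument will be to bound the degree of every vertex of $G$ by $2$. A vertex $v_L$ corresponds to a connected component $L$ of a critical level set $\overline{g}^{-1}(w)$; around each critical submanifold $C \subseteq L$ of index $\lambda$ and coindex $\lambda'$, the Morse-Bott normal form $\overline{g} = w + |y|^2 - |x|^2$ (with $x \in \R^{\lambda}$, $y \in \R^{\lambda'}$) shows that, under the hypothesis $\lambda, \lambda' \neq 1$, the local slice of the level set just above (resp.\ below) $w$ near $C$ is either connected or empty. I then split into two cases. If $L$ equals a single local minimum or maximum submanifold, then one side of the tubular neighborhood of $L$ is empty and the other is connected, so $\deg(v_L) = 1$. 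Otherwise every critical submanifold $C_i \subseteq L$ has $\lambda_i, \lambda_i' \geq 2$, hence codimension $\lambda_i + \lambda_i' - 1 \geq 3$ in $L$, so $L^{\operatorname{reg}} := L \setminus \bigcup_i C_i$ remains connected. Combining this with the local connectedness of each side of a tubular neighborhood of each $C_i$, a union-of-connected-sets gluing argument shows that each side of the tubular neighborhood of $L$ is connected, giving $\deg(v_L) = 2$.

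To finish, it remains to rule out cycles in $G$. On any hypothetical cycle the function $\widetilde{g}$ would attain a local maximum at some vertex $v$, at which at least two cycle-edges approach from below; but the previous paragraph's degree bound allows at most one edge ending at $v$ from below, a contradiction. Hence $G$ is a connected acyclic $1$-complex all of whose vertices have degree at most $2$, i.e.\ a path, and $\widetilde{g}$ is injective on it. The main technical obstacle I anticipate is the gluing step in the second case of the degree analysis: verifying that the connectedness of $L^{\operatorname{reg}}$ (which relies on the codimension-$3$ bound) and the local normal form at each $C_i$ combine to give global connectedness of each side of the tubular neighborhood of $L$, together with checking that the resulting $1$-complex structure on $G$ is actually locally finite so that the graph-theoretic arguments apply.
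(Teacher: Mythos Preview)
Your Reeb-graph approach is sound, and the gluing obstacle you flag can be handled along the lines you suggest. For the second case, the cleanest route is: the downward gradient flow identifies the complement of $\bigcup_i W^u(C_i)$ in the nearby upper level with $L^{\mathrm{reg}}$; since each $W^u(C_i)$ meets that level in codimension $\lambda_i\geq 2$, connectedness of $L^{\mathrm{reg}}$ transfers to the whole upper slice by density. (Your ``codimension~$3$'' phrasing is slightly loose because $L$ is not a manifold near the $C_i$, but the local cone model $S^{\lambda_i-1}\times S^{\lambda_i'-1}\times(0,\infty)$ gives the needed local connectedness of $L\smallsetminus C_i$ directly.) One point you leave implicit: to conclude that $\widetilde g$ is injective on the resulting path you need not merely $\deg\leq 2$ but the stronger fact you actually prove, that each degree-two vertex has exactly one edge from above and one from below, forcing $\widetilde g$ to be globally monotone along the path.

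The paper takes a rather different and shorter route. It fixes $c$, sets $h=(\overline g-c)^2$, and observes that on $h^{-1}((0,\infty))$ this is proper Morse--Bott with no index-$1$ critical points (the index of $h$ equals the index of $\overline g$ above $c$ and the coindex of $\overline g$ below $c$, so both hypotheses are used). A short preliminary lemma then shows $h^{-1}(0)$ is connected: any two of its points lie in a common component of some sublevel set $h^{-1}([0,b))$, and one descends through the finitely many critical values in $(0,b)$ using only the standard fact that attaching handles of index $\neq 1$ keeps the map $\pi_0(\text{smaller sublevel})\to\pi_0(\text{larger sublevel})$ injective. This buys brevity and avoids analysing singular level sets or building the CW structure on the Reeb graph; your argument, by contrast, delivers the stronger global conclusion that the entire Reeb graph is a monotone interval, at the price of more infrastructure.
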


We observe that the result below is a special case of Theorem \ref{thm:equivalence}, and the proof below illustrates our general strategy.

\begin{Theorem}[V\~u Ngoc]\label{thm:semitoric}
  Let $(M,\omega,\ft \times \R, f = (\Phi,g))$ be a semitoric system such that $\Phi$ is proper. 
   Then the fiber $f^{-1}(\beta,c) = \Phi^{-1}(\beta) \cap g^{-1}(c)$ is connected for all $(\beta,c) \in \ft^* \times \R$.
\end{Theorem}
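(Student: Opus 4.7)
The plan is to reduce the theorem to two-dimensional Morse theory on the reduced space $\Phi^{-1}(\alpha)/T$, using Proposition~\ref{connected} together with the local chart constructions proved in Propositions~\ref{gcrit} and \ref{focusfocus}.

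First I would fix $(\alpha,c) \in \ft^* \times \R$. If $\Phi^{-1}(\alpha)$ is empty there is nothing to prove. Otherwise, since $\Phi$ is proper, $\Phi^{-1}(\alpha)$ is compact and, by Theorem~\ref{thm:connected}, connected; hence the reduced space $\Phi^{-1}(\alpha)/T$ is compact and connected. If it is a single point, then $\Phi^{-1}(\alpha)$ is one $T$-orbit and $f^{-1}(\alpha,c)$ is either empty or equal to this orbit, so we are done. Otherwise, every point of $\Phi^{-1}(\alpha)$ is tall, and by \cite[Proposition 6.1]{KT1} the reduced space is a closed, connected, oriented topological $2$-manifold.

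Next I would verify the hypotheses of Proposition~\ref{connected} at $\alpha$. Let $p \in \Phi^{-1}(\alpha)$ be a critical point of $g$ modulo $\Phi$. Because the system is semitoric, $p$ is non-degenerate with only elliptic and focus-focus blocks. If $p$ has purely elliptic type then Proposition~\ref{gcrit} supplies a chart on a neighborhood of $[p]$ in which $\overline{g}$ has the form $\pm(x^2+y^2)$. If $p$ is focus-focus then Lemma~\ref{focusfocus} supplies a chart in which $\overline{g}$ has the form $(x,y)\mapsto y$; in particular $[p]$ is \emph{not} a critical point of $\overline{g}$ in the new smooth structure. Applying Proposition~\ref{connected} endows $\Phi^{-1}(\alpha)/T$ with the structure of a closed, connected, oriented smooth surface on which $\overline{g}$ is a Morse function whose critical points all have index $0$ or $2$.

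Finally, since $\overline{g}$ has no critical points of index one, Lemma~\ref{Morse2} gives that $\overline{g}^{-1}(c)$ is connected. Because the quotient map $\pi\colon \Phi^{-1}(\alpha) \to \Phi^{-1}(\alpha)/T$ is continuous and surjective with connected ($T$-orbit) fibers, the preimage $f^{-1}(\alpha,c) = \pi^{-1}(\overline{g}^{-1}(c))$ is connected. The main conceptual point — the hard work having been done in the preceding sections — is that in the natural smooth structure inherited from $M$, a focus-focus fiber of $\overline{g}$ is a nodal curve rather than a manifold; the smooth structure produced by Proposition~\ref{connected} effectively resolves this node, upgrading $\overline{g}$ to an honest Morse function so that standard surface-level Morse theory applies. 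This is precisely the feature that distinguishes the present argument from V\~u Ngoc's original proof.
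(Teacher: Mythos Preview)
Your proof is correct and follows essentially the same route as the paper's: reduce to the quotient, handle the single-point case separately, then in the tall case invoke Proposition~\ref{connected} together with Proposition~\ref{gcrit} and Lemma~\ref{focusfocus} to make $\overline{g}$ a Morse function with no index-$1$ critical points, and conclude connectedness of the fibers. The only cosmetic difference is that you cite Lemma~\ref{Morse2} for the last step whereas the paper cites Proposition~\ref{prop:connected}; both apply here and give the same conclusion.
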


\begin{proof}
 Fix $\beta \in \ft^*$.  Assume without loss of generality that $\beta = 0$. Since $\Phi$ is proper, the reduced space $\MmodT$ is compact and connected by Theorem~\ref{thm:connected}.

 If the reduced space $\MmodT$ is a single point 
 then obviously $\Phi^{-1}(0)$ consists of a single $T$-orbit. Hence, the fiber
 $f^{-1}(0,c)$ is connected for all $c \in \R$. 

Otherwise, since the orbit of a short 
point in $\Phi^{-1}(0)$ is an isolated point in $\Phi^{-1}(0)/T$, every point in $\Phi^{-1}(0)$ is tall. If $p \in \Phi^{-1}(0)$ is a critical point of $g$ modulo $\Phi$, then $p$ is also a singular point of $\left(M,\omega, \ft \times \R, f\right)$. So, by assumption, $p$ either has purely elliptic type or is focus-focus. By Propositions \ref{connected} and \ref{gcrit}, and Lemma \ref{focusfocus}, $\MmodT$ can be given the structure of a smooth closed, connected, oriented surface such that $\overline{g} : \MmodT \to \R$ is a Morse function with no critical point of index 1. By Proposition \ref{prop:connected}, the fiber $\overline{g}^{-1}(c)$ is connected for all $c \in \R$. Since $T$ is connected, it follows that $f^{-1}(0,c)$ is also connected.
 \end{proof}

\section{Non-degenerate singular points}\label{sec:non-degenerate}

Let $\left(M,\omega, \ft \times \R, f:=\left(\Phi,g\right)\right)$ be an integrable  system such that $\left(M,\omega,\Phi\right)$ is a complexity one $T$-space. Let $p \in \Phi^{-1}(0)$ be a tall non-degenerate singular point. By Lemma \ref{greg2} and Proposition \ref{gcrit}, if $p$ has purely elliptic type, then we can put a smooth structure on $\MmodT$ near $[p]$ so that $\overline{g}$ is locally Morse, and hence $p$ satisfies the hypotheses of Proposition \ref{connected}. The aim of this section is to extend this result to the remaining cases (see Proposition \ref{nondegenerate}). In particular, if $p$ either has a focus-focus block, or has a hyperbolic block and disconnected $T$-stabilizer, then $[p]$ is a regular point of $\overline{g}$, while if $p $ has a hyperbolic block and connected $T$-stabilizer, then $[p]$ is a singular point with index one. We start by recalling some facts about non-degenerate singular points.
\begin{Remark}\label{rmk:non-degenerate}
  Let $p$ be a point in an integrable system $(M,\omega,V,f)$ and let $W_0 \subseteq V$ be the Lie algebra of the stabilizer of $p$. Let $\mathcal{O}$ be the $V$-orbit through $p$. The linearized action at $p$ induces a Lie algebra homomorphism $W_0 \to \mathfrak{sp}((T_p\rorbit)^\omega/T_p \rorbit)$. Given $\xi \in W_0$, we can calculate the associated symplectic endomorphism as follows: Let $f_{\mathrm{lin}}$ be the homogeneous moment map for the linearized action. 
  We view $\langle f_{\mathrm{lin}},\xi \rangle$ as a homogeneous quadratic polynomial on $(T_p\rorbit)^\omega/T_p \rorbit$.
  We recall that there is a standard isomorphism between homogeneous quadratic polynomials and $\mathfrak{sp}((T_p\rorbit)^\omega/T_p \rorbit)$ that, upon choosing a standard basis for the symplectic vector space $(T_p\rorbit)^\omega/T_p \rorbit$, sends a symmetric matrix $A$ to $JA$, where $J$ is the standard complex structure on $(T_p\rorbit)^\omega/T_p \rorbit$. The above Lie algebra homomorphism sends $\xi$ to  $J\langle f_{\mathrm{lin}},\xi \rangle$.
  
  If, in addition, $p$ is non-degenerate, then for a generic element $\xi$ in the Lie algebra of $W$, the eigenvalues of the above endomorphism of $(T_p\rorbit)^\omega/T_p \rorbit$ can be characterized as follows: If $f_{\mathrm{lin}}$ can be written as the product of $k_e$ elliptic blocks, $k_h$ hyperbolic blocks and $k_f$ focus-focus blocks, then the above endomorphism has exactly $2k_e$ distinct non-zero imaginary eigenvalues, $2k_h$ distinct non-zero real eigenvalues, and $4k_f$ distinct non-zero complex eigenvalues that are neither real nor imaginary. In particular, the numbers $k_e, k_h$ and $k_f$ are a well-defined invariant of $p$.
\end{Remark}

 Before proceeding to the tall case, we start by analyzing short points.
\begin{Lemma}\label{short_elliptic}
Let $\left(M,\omega, \ft \times \R, f:=\left(\Phi,g\right)\right)$ be an integrable system such that $\left(M,\omega,\Phi\right)$ is a complexity one $T$-space. If $p \in M$ is short and non-degenerate, then $p$ has purely elliptic type.
\end{Lemma}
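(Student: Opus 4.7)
The plan is to work in the local model at $p$, use the short condition to produce a positive-definite quadratic form on the symplectic slice, and then argue that the presence of any non-elliptic block in the non-degeneracy decomposition at $p$ would contradict positive-definiteness.

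I would start by passing, via Marle-Guillemin-Sternberg, to the local model $Y = T \times_H \fh^\circ \times \C^{h+1}$ with $p = [1, 0, 0]$; the symplectic slice at $p$ is $\C^{h+1}$, on which $H$ acts via $\rho \colon H \hookrightarrow (S^1)^{h+1}$ with weights $\eta_0, \ldots, \eta_h \in \fh^*$ and homogeneous moment map $\Phi_H(z) = \tfrac{1}{2} \sum_j \eta_j |z_j|^2$. As in the proof of Lemma \ref{trivial}, the short condition is equivalent to $\Phi_H^{-1}(0) = \{0\}$. Since $\Phi_H(z)$ is a non-negative combination of the weights, a Farkas-type alternative shows that this in turn is equivalent to the existence of some $\lambda \in \fh$ with $\langle \eta_j, \lambda \rangle > 0$ for all $j$; for such a $\lambda$, the function $\langle \Phi_H, \lambda \rangle$ is positive-definite on $\C^{h+1}$.

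Next I would identify this form with part of the linearized integrable system. Since $\fh$ stabilizes $p$ under the $T$-action, $\fh$ is contained in the Lie algebra $W_0$ of the $(\ft \times \R)$-stabilizer of $p$. For $\lambda \in \fh$, $\langle f, \lambda \rangle = \langle \Phi, \lambda\rangle$ is, on the slice, already a homogeneous quadratic polynomial equal to its own Hessian, so $\langle f_{\mathrm{lin}}, \lambda \rangle$ restricted to the slice is precisely the positive-definite form from the previous step. Using the non-degeneracy of $p$, fix symplectic identifications $W_0 \simeq \R^{k_e} \oplus \R^{k_h} \oplus (\R^2)^{k_f}$ and $(T_p \rorbit)^\omega / T_p \rorbit \simeq \C^{k_e} \oplus \C^{k_h} \oplus \C^{2 k_f}$ under which $f_{\mathrm{lin}}$ is a product of blocks, and write the image of $\lambda$ in these coordinates as $(\lambda_e, \lambda_h, \lambda_f)$. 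Then the restrictions of $\langle f_{\mathrm{lin}}, \lambda \rangle$ to the elliptic, hyperbolic and focus-focus blocks are $\lambda_e \cdot \tfrac{1}{2} |w|^2$, $\lambda_h \cdot \Im(w^2)$, and $\lambda_{f,1} \cdot \tfrac{1}{2}(|w_1|^2 - |w_2|^2) + \lambda_{f,2} \cdot \Im(w_1 w_2)$, respectively.

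The restriction of a positive-definite quadratic form to any subspace must again be positive-definite. However, a direct signature computation shows that the hyperbolic restriction is either identically zero or has signature $(1,1)$, while the focus-focus restriction (obtained by diagonalizing the two $2 \times 2$ blocks that appear in the pairs $(x_1, y_2)$ and $(y_1, x_2)$) is either identically zero or has signature $(2, 2)$; neither is positive-definite. Hence $k_h = k_f = 0$ and $p$ has purely elliptic type. The main technical point to verify is the block expression for $\langle f_{\mathrm{lin}}, \lambda\rangle$ in the coordinates coming from the non-degeneracy isomorphism; once this is set up, the signature obstruction is immediate from elementary linear algebra.
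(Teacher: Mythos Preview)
Your approach is correct and genuinely different from the paper's. Both arguments begin by passing to the local model, and both need the observation that shortness forces the $(\ft \times \R)$-orbit $\rorbit$ to coincide with the $T$-orbit (so that $(T_p\rorbit)^\omega/T_p\rorbit$ is the full slice $\C^{h+1}$). You use this implicitly when you treat ``the slice'' as the space on which $f_{\mathrm{lin}}$ lives, but it deserves an explicit sentence: since the Hamiltonian flow of $g$ preserves $\Phi^{-1}(\Phi(p)) = T\cdot p$, the $\R$-factor cannot enlarge the orbit. From there the two proofs diverge. The paper observes that shortness makes $\Phi_Y$, and hence $\Phi_H$ and the full linearized moment map $(\Phi_H, T^2_0(j^*g))$, \emph{proper}; any hyperbolic or focus-focus block would make this map non-proper, a contradiction. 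You instead extract from shortness, via Gordan's alternative, a direction $\lambda \in \fh$ for which $\langle \Phi_H, \lambda\rangle$ is positive-definite on $\C^{h+1}$, and then rule out non-elliptic blocks by a signature computation in the block decomposition. The paper's route is a one-liner once properness is in hand and avoids both the convex-geometry step and the signature check; your route is more hands-on and turns the obstruction into an elementary linear-algebraic fact about quadratic forms rather than a topological statement about properness. Both exploit the same underlying convexity of the weights in the short case.
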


\begin{proof}
For simplicity, assume that $g(p) =0$. By the Marle-Guillemin-Sternberg local normal form theorem, we may assume that $M$ is a local model $Y = T \times_H (\fh^\circ \times \C^{h+1})$, that $\Phi = \Phi_Y$, and that $p =
[1,0,0]$. Let $j \colon \C^{h+1} \to T \times_H \fh^\circ \times \C^{h+1} $ denote the inclusion $z \mapsto [1,0,z]$. By Lemma \ref{greg2}, $p$ is a critical point of $g$ modulo $\Phi$. Hence, the $(T \times \R)$-orbit $\rorbit$ through $p$ coincides with the $T$-orbit $\mathfrak{O}$ through $p$ and the Taylor polynomial $T^1_0 (j^*g)$ of degree one at $0 \in \C^{h+1}$ vanishes. Thus we may identify $(T_p\rorbit)^\omega/T_p \rorbit = (T_p\mathfrak{O})^\omega/T_p \mathfrak{O}$ with  $\C^{h+1}$ so that the homogeneous moment map for the linearized action at $p$ is the Taylor polynomial of degree two $T^2_0 (j^*f)= (T^2_0(j^*\Phi),T^2_0 (j^*g)) = (\Phi_H, T^2_0 (j^*g))$, where $\Phi_H \colon \C^{h+1} \to \fh^*$ is the homogeneous moment map for the $H$-action. 

 By \cite[Lemma 5.4]{KT1}, $\Phi$ is proper; hence, so is $(\Phi_H, T_0^2 (j^*g))$. If $p$ has a hyperbolic or a focus-focus block, then the homogeneous moment map for the linearized $(H \times \R)$-action on $\C^{h+1}$ is not proper. Hence, $p$ has  purely elliptic type.
\end{proof}

We can now state the main result of this section.
\begin{Proposition}\label{nondegenerate}
Let $\left(M,\omega, \ft \times \R, f:=\left(\Phi,g\right)\right)$ be an integrable  system such that $\left(M,\omega,\Phi\right)$ is a complexity one $T$-space. Assume that  $p \in \Phi^{-1}(0) \cap g^{-1}(0)$ is non-degenerate and tall; let $N$ be the degree of  the defining monomial of the local model for $p$.
Then $p$  has at most one non-elliptic block; furthermore,
\begin{enumerate}
\item If $p$ has a hyperbolic block and connected $T$-stabilizer, then $N = 1$. Moreover, there exist an open neighborhood $U$ of $[p] \in \MmodT$ and a map $\Psi \colon U \to \R^2$ taking $[p]$ to $(0,0)$ satisfying:
\begin{enumerate}[label=(\alph*),ref=(\alph*)]
    \item \label{item:-500} $\Psi \colon U \to \Psi(U)$ is a diffeomorphism onto an open set; and
    \item \label{item:-600} $(\overline{g} \circ \Psi^{-1})(x,y) = x^2 - y^2$ for all $(x,y) \in \Psi(U)$.
   \end{enumerate}
    \item If  $p$ either has a focus-focus block, or  has a hyperbolic block and disconnected $T$-stabilizer, then $N = 2$. Moreover,  there exist an open neighborhood $U$ of $[p] \in \MmodT$ 
  and a map $\Psi \colon U \to \R^2$ taking $[p]$ to $(0,0)$ satisfying:
  \begin{enumerate}
  \item \label{item:800} $\Psi \colon U \to \Psi(U)$ is a homeomorphism onto an open set;
  \item \label{item:1200} $\Psi$ restricts to a diffeomorphism on $U \smallsetminus \{[p]\}$; and
      \item \label{item:1300} $(\overline{g} \circ \Psi^{-1})(x,y)= y$ for all $(x,y) \in \Psi(U)$.
  \end{enumerate}
  \end{enumerate}
\end{Proposition}

\begin{proof}
By the Marle-Guillemin-Sternberg local normal form theorem, we may assume that $M$ is a tall local model $Y = T \times_H \fh^\circ \times \C^{h+1}$, that $\Phi = \Phi_Y$, and that $p =
[1,0,0]$. Let $j \colon \C^{h+1} \to T \times_H \fh^\circ \times \C^{h+1} $ denote the inclusion $z \mapsto [1,0,z]$. 

 By Lemma~\ref{greg2}, every regular point of $g$ modulo $\Phi$ has purely elliptic type.
Therefore, we may assume that $p$ is a critical point of $g$ modulo $\Phi$.
As in the proof of Lemma \ref{short_elliptic}, this implies that the Taylor polynomial  of degree one $T^1_0(j^*g)$ is identically zero, and we may identify $(T_p\rorbit)^\omega/T_p \rorbit$ with  $\C^{h+1}$ so that the homogeneous moment map for the linearized action at $p$ is the Taylor polynomial of degree two $(\Phi_H, T^2_0 (j^*g))$. 

Let $P \colon \C^{h+1} \to \C$ be the defining monomial of $Y$. Depending on the degree $N$ of $P$, we may assume that we are in one of the following four cases (cf.\ Examples \ref{ex1} and \ref{ex2}). \\

{\bf Case 1: $\mathbf{N=1}$ and $\mathbf{P(z) = z_0}$.} In this case, $\C^{h+1} \simeq \C \times \C^{h}$ and $H \simeq (S^1)^h$, where $(S^1)^h$ acts on $\C^{h}$ in the standard fashion and trivially on $\C$. Thus the span of   $\{\Phi_H^{\xi} \mid \xi \in \fh\}$   equals the span of 
 $\{|z_k|^2 \colon k = 1,\ldots, h\}.$
Moreover,  since the Taylor polynomial $T^2_0 (j^*g)$ is $H$-invariant, by Remark \ref{rmk:other_basis} it   lies in the span of $\{\Re(z_0^2),\Im(z_0^2), |z_k|^2 \colon k=0,\ldots,h \}$.
Therefore there exist constants $a_x,a_y, a_\rho \in \R$ and  $\xi \in \fh$  such that 
\begin{equation}
\label{eq:-20}
 T^2_0(j^*g) (z)=  a_x \Re(z_0^2) + a_y \Im (z_0^2) + a_{\rho} |z_0|^2 + \Phi_H^{\xi} (z) \  \forall  z \in \C^{h+1}.
\end{equation}
Moreover, a generic element in the span of $\{T^2_0 (j^*g), \Phi_H^{\xi} \mid \xi \in \fh \}$ is a multiple of
$$     a_x \Re(z^2_0) + a_y \Im (z^2_0) + a_{\rho} |z_0|^2 + \sum\limits_{k=1}^{h} c_k |z_k|^2, 
$$
\noindent
where $c_k \in \R \smallsetminus \{0\}$ for $k=1,\ldots, h$. The eigenvalues of the corresponding symplectic endomorphism are given by 
\begin{equation*}
      \pm 2\sqrt{a_x^2+a_y^2 - a_{\rho}^2},\,\, \pm 2c_1 \sqrt{-1},\,\, \ldots,\, \,\pm 2c_{h}\sqrt{-1}.
\end{equation*}
 Hence, since $p$ is non-degenerate,
 Remark \ref{rmk:non-degenerate} implies that 
 $a_x^2+a_y^2 - a_{\rho}^2 \neq 0$. Moreover, if $a_x^2+a_y^2 - a_{\rho}^2 > 0$, then $p$ has one hyperbolic block and the remaining blocks are elliptic; otherwise, $p$ has purely elliptic type.

By Lemma \ref{trivial}, $\overline{P}$ is a diffeomorphism; moreover,  the inclusion $i \colon \C \to Y$ sending 
$z_0$ to $[1,0,(z_0,0,\ldots, 0)]$
induces the inverse diffeomorphism $\overline{i} \colon \C \to \YmodT$. 
By definition of the inclusion $j$ and by  \eqref{eq:-20}, 
$$ T^2_0 (\overline{i}^*\overline{g}) (z_0) = a_x \Re(z_0^2) + a_y \Im (z_0^2) + a_{\rho} |z_0|^2 \quad  \forall  z_0 \in \C. $$
Hence, the zero set of the map $\YmodT \to \R$ taking $[t,\alpha,z] \in \YmodT$ to $T^2_0 (j^*g)(z)$ is homeomorphic to the set $\{(x,y) \in \R^2 \mid xy=0\}$ if $p$ has a hyperbolic block, and is a single point if $p$ has purely elliptic type. 

Finally, assume that $p$ has a hyperbolic block. Then $0$ is a non-degenerate critical point of $\overline{i}^*\overline{g}$ of index 1. Hence, by the Morse lemma, there exists an open neighborhood $V$ of $(0,0) \in \R^2$ and a map $\psi \colon V \to \C$ taking $(0,0)$ to $0$ that is a diffeomorphism onto an open set 
 such that $(\overline{i} \circ \psi)^* \overline{g} (x,y) = x^2 - y^2$ for all $(x,y) \in V$. Setting $U:=(\overline{i} \circ \psi)(V)$ and $\Psi \colon U \to \R^2$ to be $(\overline{i}\circ \psi)^{-1},$ the result follows. \\

 {\bf Case 2: $\mathbf{N=2}$ and $\mathbf{P(z) = z_0^2}$.} In this case, $\C^{h+1} \simeq \C \times \C^{h}$ and $H \simeq \mathbb{Z}_2 \times (S^1)^h$, where $\mathbb{Z}_2$ acts on $\C$ by multiplication with $-1$ and trivially on $\C^{h}$, and $(S^1)^h$ acts on $\C^{h}$ in the standard fashion and trivially on $\C$. Thus the span of $\{\Phi_H^{\xi} \mid \xi \in \fh\}$ equals the span of $\{|z_k|^2 \colon k = 1,\ldots, h\}$. 
Moreover, since the Taylor polynomial $T^2_0 (j^*g)$ is $H$-invariant, by Remark \ref{rmk:other_basis} it lies in the span of $\{\Re (z_0^2), \Im (z_0^2),  |z_k|^2 \colon k=0,\ldots,h \}$. Hence, there exist constants $a_x,a_y, a_\rho \in \R$ and $\xi \in \fh$ such that
    \begin{equation}
    \label{eq:hyper}
    T^2_0(j^*g)(z) = a_x \Re(z_0^2) + a_y \Im (z_0^2) + a_{\rho} |z_0|^2 + \Phi^{\xi}(z) \ \forall z \in \C^{h+1}.
\end{equation}
As in the proof of Case 1, since $p$ is non-degenerate, Remark \ref{rmk:non-degenerate} implies that $a_x^2+a_y^2 - a_{\rho}^2 \neq 0$. Moreover, if $a_x^2+a_y^2 - a_{\rho}^2 > 0$, then $p$ has one hyperbolic block and the remaining blocks are all elliptic; otherwise, $p$ has purely elliptic type. 

By Lemma \ref{lemma:hom}, the inclusion $i \colon \C \hookrightarrow Y$ sending $z_0$ to $[1,0,(z_0,0,\ldots, 0)]$
induces a diffeomorphism $\overline{i}$ between  $\C/\!/\Z_2 = \C/\Z_2$ and $\YmodT$. By definition of the inclusion $j$ and by \eqref{eq:hyper}, 
$$ T^2_0 (i^*g)(z_0) = a_x \Re(z_0^2) + a_y \Im (z_0^2) + a_{\rho} |z_0|^2 \quad \forall z_0 \in \C. $$
 This implies that the zero set of the map $\YmodT \to \R$ taking $[t,\alpha,z] \in \YmodT$ to $T^2_0 (j^*g)(z)$ is homeomorphic to $\R$ exactly if $p$ has a hyperbolic block, and is a single point if $p$ has purely elliptic type. 

Finally, assume that $p$ has a hyperbolic block. Then $0$ is a non-degenerate critical point of $i^*g$ of index 1. Hence, by the equivariant Morse lemma, there exists an open neighborhood $V$ of $(0,0) \in \R^2 \simeq \C$ and a map $\psi \colon V \to \C$ taking $(0,0)$ to $0$ that is a $\Z_2$-equivariant diffeomorphism onto an open set 
 such that $(i \circ \psi)^* g (x,y) = xy$ for all $(x,y) \in V$. Let $\pi \colon \R^2/\Z_2 \to \R^2$ be the map that sends $[x,y]$ to $(xy,x^2-y^2)$; it is a homeomorphism onto $\R^2$ and a diffeomorphism on the complement on $(0,0)$. Setting $U:=(\overline{i} \circ \overline{\psi})(V/\Z_2)$ and $\Psi \colon U \to \R^2$ to be $ \pi \circ(\overline{i}\circ \overline{\psi})^{-1},$ the result follows. \\

{\bf Case 3: $\mathbf{N = 2}$ and $\mathbf{P(z) = z_0z_1}$.} In this case, $\C^{h+1} \simeq \C^2 \times \C^{h-1}$ and $H \simeq S^1 \times (S^1)^{h-1}$, where $S^1$ acts on $\C^2$ with weights $\pm 1$ and trivially on $\C^{h-1}$, and $(S^1)^{h-1}$ acts on $\C^{h-1}$ in the standard fashion and trivially on $\C^2$. Hence, the span of $\{\Phi_H^{\xi} \mid \xi \in \fh\}$ equals the span of $\{|z_k|^2 \colon k =2, \ldots, h\}$ and $\Phi_1(z) := \frac{1}{2}(|z_0|^2-|z_1|^2)$. Moreover, since the Taylor polynomial $T^2_0 (j^*g)$ is $H$-invariant, by Remark \ref{rmk:other_basis} it lies in the span of $\{\Re(z_0 z_1),\Im(z_0 z_1), |z_k|^2 \colon   k=0,\ldots,h \}$. Therefore, there exist constants $a_x,a_y, a_\rho \in \R$ and $\xi \in \fh$ such that 
\begin{equation}
\label{eq:ffgeneral2}
     T^2_0(j^*g)(z) = a_x \Re(z_0z_1) + a_y \Im (z_0z_1) + a_{\rho} (|z_0|^2 +|z_1|^2)+ \Phi^{\xi}(z) 
\end{equation}
\noindent
for all $z \in \C^{h+1}$. Moreover, a generic element in the span of $\{T^2_0 (j^*g), \Phi_H^{\xi} \mid \xi \in \fh\}$ is a multiple of
$$ a_x \Re(z_0z_1) + a_y \Im (z_0z_1) + a_{\rho} (|z_0|^2+|z_1|^2) + c_1 \Phi_1 + \sum\limits_{k=2}^{h} c_k |z_k|^2, $$
\noindent
where $c_j \in \R \smallsetminus \{0\}$ for $j=1,\ldots, h$. The eigenvalues of the corresponding symplectic endomorphism are given by 
\begin{equation*}
        \pm 2c_1 \sqrt{-1} \pm \sqrt{a_x^2 + a_y^2 - 4a_{\rho}^2}\, , \, \pm 2c_2 \sqrt{-1},\,\ldots,\, \pm 2c_{h} \sqrt{-1},
\end{equation*}
\noindent
where the signs in the first term are independent. Hence, since $p$ is non-degenerate, Remark \ref{rmk:non-degenerate} implies that $a_x^2 + a_y^2 - 4a_{\rho}^2 \neq 0$. If $a_x^2 + a_y^2 - 4a_{\rho}^2 > 0$,  then $p$ has one focus-focus block, and the remaining blocks are elliptic; otherwise $p$ has purely elliptic type. It follows from \eqref{eq:ffgeneral2}, and Lemmas \ref{trivial} and \ref{lemma:constant}, that the defining monomial identifies the map $\YmodT \to \R$ taking $[t,\alpha,z] \in \YmodT$ to $T^2_0 (j^*g)(z)$ with
the map $\C \to \R$ taking $w \in \C$ to $a_x \Re(w) + a_y \Im(w) + 2a_\rho |w|$. Hence, the zero set of the map $\YmodT \to \R$ taking $[t,\alpha,z] \in \YmodT$ to $T^2_0 (j^*g)(z)$ is homeomorphic to $\R$ exactly if $p$ has a focus-focus block, and is a single point if $p$ has purely elliptic type. 

Finally, assume that $p$ has a focus-focus block. 
Let $i \colon \C^2 \hookrightarrow Y$ be the inclusion sending $(z_0,z_1)$ to $[1,0,(z_0,z_1,0,\ldots, 0)]$.
We observe that $i^*\omega_Y$ is the standard symplectic form on $\C^2$; see Remark \ref{rmk:symp-form}. Moreover, since $i(\C^2)$ is invariant under the $S^1$-action generated by $\Phi_1$, the functions $i^*\Phi_1$ and $i^*g$ commute with respect to the induced Poisson bracket.
By definition of the inclusion $j$ and by \eqref{eq:ffgeneral2}, there exists  $c \in \R$ such that
\begin{equation*}
    T^2_0 (i^*g)(z) = a_x \Re(z_0z_1) + a_y \Im (z_0z_1) + a_{\rho} (|z_0|^2 + |z_1|^2) + c \,i^*\Phi_1(z)
\end{equation*}
for all $z = (z_0,z_1) \in \C^2$. 
 Since $a_x, a_y,$ and $ a_{\rho}$ are not all zero, the quadratic forms $i^*\Phi_1$ and $T^2_0 (i^* g)$ are linearly independent.   Hence the map $(i^*\Phi_1,T^2_0 (i^*g)) \colon \C^2 \to \R^2$ is regular on a dense subset.  Since the Taylor polynomial of $i^*g - T^2_0 (i^*g)$ of degree two at zero vanishes, the 2-form $d (i^*\Phi_1) \wedge d (i^*g -T^2_0 (i^*g))$ goes to 0 more quickly than $d (i^*\Phi_1) \wedge d (T^2_0 (i^*g))$ as $z = (z_0,z_1)$ approaches $0$. Hence,  the map 
 $(i^*\Phi_1,i^*g)$ is regular on a dense subset of an open neighborhood $W$ of $0$ and so $(W,i^*\omega_Y, \R \times \R, (i^*\Phi_1,i^*g))$ is an integrable system. Moreover, since $a_x^2 + a_y^2 - 4a_{\rho}^2 > 0$, the point
 $0 \in \C^2$ is a non-degenerate singular point with one focus-focus block. 
So by Lemma \ref{focusfocus}, there exist an open neighborhood $V$ of $[0] \in \C^2/\!/S^1$ and a map $\psi \colon V \to \R^2$ taking $[0]$ to $(0,0)$ satisfying \eqref{item:800} -- \eqref{item:1300}.
Moreover, by Lemma \ref{lemma:hom},  the inclusion $i$ induces a diffeomorphism $\overline{i} \colon \C^2/\!/S^1 \to \MmodT$.
Setting $U:= \overline{i}(V)$ and $\Psi \colon U \to \R^2$ to be $\psi \circ \overline{i}^{-1}$, the result follows. \\

{\bf Case 4: $\mathbf{N > 2}$.} In this case, since the Taylor polynomial $T^2_0 (j^*g)$ is $H$-invariant, by Remark \ref{rmk:other_basis} it lies in the span of $\{ |z_k|^2 \colon k = 0, \dots, h \}$, as does $\Phi_H^\xi$ for any $\xi \in \fh$. Hence, any element in the span of $\{T^2_0 (j^*g), \Phi_H^{\xi} \mid \xi \in \fh\}$ can be written as 
\[\sum\limits_{k=0}^{h} c_k |z_k|^2. \]
The eigenvalues of the corresponding symplectic endomorphism are given by 
$$ \pm 2c_0 \sqrt{-1},\, \ldots,\, \pm 2c_{h}\sqrt{-1}. $$
Since $p$ is non-degenerate, the span of $\{T^2_0 (j^*g), \Phi_H^{\xi} \mid \xi \in \fh\}$ equals the span of $\{ |z_k|^2 \colon k = 0, \dots, h \}$. Hence, the zero set of the map $\YmodT \to \R$ taking $[t,\alpha,z]$ to $T^2_0(j^*g)$ is the point $[1,0,0]$. Moreover, since all the eigenvalues of the symplectic endomorphism corresponding to any element of the above span are imaginary, $p$ has purely elliptic type by Remark \ref{rmk:non-degenerate}.
\end{proof}

A careful reading of the proof of Proposition~\ref{nondegenerate} shows that it also
demonstrates the following result, which we will need in a future paper.

\begin{Lemma}\label{lemma:eph-ell}
    Let $(Y,\omega_Y,\ft \times \R, f:=(\Phi_Y,g))$ be an integrable system, where  $Y = T \times_H (\fh^{\circ} \times \C^{h+1})$ is a tall local model.
    Assume that $p = [1,0,0]$ is a non-degenerate critical point of $g$ modulo $\Phi$ with $g(p) = 0$. If  $j \colon \C^{h+1} \to Y$ is the inclusion $z \mapsto [1,0,z]$, then
\begin{itemize}
    \item the map $\YmodT \to \R$ taking $[t,\alpha,z] \in \YmodT$ to $T^1_0 (j^*g)(z)$ is identically zero, and
    \item the zero set of the map $\YmodT \to \R$ taking $[t,\alpha,z] \in \YmodT$ to $T^2_0 (j^*g)(z)$ is
    \begin{itemize}
        \item 
     a single point if $p$ has purely elliptic type,
     \item  homeomorphic to $\{(x,y) \in \R^2 \mid xy=0\}$ if $p$ has a hyperbolic block and connected $T$-stabilizer,
    and  \item homeomorphic to $\R$ otherwise.
    \end{itemize}
\end{itemize}
\end{Lemma}

\section{Proof of the main result}\label{sec:proofs}
In this section, we prove of the main results, Theorems \ref{thm:equivalence} and \ref{thm:converse}. 
To begin, we explain how certain questions about the topology of the integrable systems we consider can be reduced to understanding Morse theory for oriented surfaces.

\begin{Proposition}\label{prop:Morse}
Let $(M, \omega, \ft \times \R, f = (\Phi,g))$ be an integrable system such that $(M,\omega,\Phi)$ is a complexity one $T$-space.  Assume that each tall 
singular point in $(M,\omega,\ft \times \R, f)$ is non-degenerate. Fix $\beta \in \ft^*$.
Each component of the reduced space $\Phi^{-1}(\beta)/T$ containing more than one point
can be given the structure of a smooth  oriented 2-dimensional manifold such that $\overline{g} \colon \Phi^{-1}(\beta)/T \to \R$ is a Morse function. Moreover, given $p \in \Phi^{-1}(\beta)$:
\begin{enumerate}
\item $[p]$ is a critical point of $\overline{g}$ of index $1$ exactly if $p$ has a hyperbolic block and connected $T$-stabilizer,
\item $[p]$ is a critical point of $\overline{g}$ of index $0$ or $2$ exactly if $p$ is a critical point of $g$ modulo $\Phi$ with purely elliptic type, and
\item $[p]$ is a regular point of $\overline{g}$ exactly if $p$ has a focus-focus block, has a hyperbolic block with disconnected $T$-stabilizer, or is a regular point of $g$ modulo $\Phi$.
\end{enumerate}
\end{Proposition}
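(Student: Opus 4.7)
The plan is to apply Proposition \ref{connected} to each connected component of $\Phi^{-1}(\alpha)/T$ that contains more than one point, and then to read off the classification of critical points from the local normal forms already established in Lemma \ref{greg2} and Propositions \ref{gcrit} and \ref{nondegenerate}.

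First I would observe that every $T$-orbit lying over a non-trivial component is tall: by definition a short orbit projects to an isolated point of the reduced space, and since the reduced space is Hausdorff (as $T$ is compact) each such isolated point is clopen and forms a connected component on its own. Consequently, by Remark \ref{rmk:crit}, every critical point $p$ of $g$ modulo $\Phi$ lying over a non-trivial component is a tall singular point of $(M,\omega,\ft \times \R, f)$, and hence by hypothesis is non-degenerate.

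Next I would verify the hypotheses of Proposition \ref{connected} at every such $p$. If $p$ has purely elliptic type, Proposition \ref{gcrit} supplies, after possibly replacing $g$ by $-g$ locally, a chart $\Psi_p$ with $\overline{g}\circ \Psi_p^{-1}(x,y)=\pm(x^2+y^2)$. Otherwise Proposition \ref{nondegenerate} supplies a chart in which $\overline{g}\circ \Psi_p^{-1}$ is either $x^2-y^2$ (when $p$ has a hyperbolic block and connected $T$-stabilizer) or $y$ (when $p$ has a focus-focus block, or a hyperbolic block with disconnected $T$-stabilizer). In every case the local expression is Morse, so the hypotheses of Proposition \ref{connected} are met; applying it component by component endows each non-trivial component with the desired smooth oriented surface structure for which $\overline{g}$ is a Morse function.

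The classification of critical points is then pure bookkeeping. The conclusion of Proposition \ref{connected} states that $[p]$ is a critical point of $\overline{g}$ of index $\lambda$ precisely when $p$ is a critical point of $g$ modulo $\Phi$ and $\Psi_p([p])$ is a critical point of $\overline{g}\circ\Psi_p^{-1}$ of the same index. Reading off the normal forms above, the sign of $\pm(x^2+y^2)$ in the purely elliptic case yields index $0$ or $2$; the form $x^2-y^2$ yields index $1$, occurring exactly when $p$ has a hyperbolic block and connected $T$-stabilizer; and the form $y$ shows that focus-focus points and hyperbolic points with disconnected $T$-stabilizer project to regular points of $\overline{g}$. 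Finally, by Lemma \ref{greg2}, any regular point of $g$ modulo $\Phi$ also projects to a regular point of $\overline{g}$. Since Lemma \ref{greg2} together with Propositions \ref{gcrit} and \ref{nondegenerate} exhaust every point of a non-trivial component, the only real subtlety is the preliminary reduction to components with more than one point, which is forced on us because Proposition \ref{connected}'s chart condition cannot be satisfied at an isolated short orbit.
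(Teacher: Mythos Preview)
Your proposal is correct and follows essentially the same route as the paper's proof: reduce to tall points on non-trivial components, observe that critical points of $g$ modulo $\Phi$ there are non-degenerate singular points, and then invoke Propositions \ref{connected}, \ref{gcrit}, and \ref{nondegenerate}. The paper's argument is terser and leaves the index bookkeeping implicit, whereas you spell it out, but the logical content is the same.
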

\begin{proof}
We may assume that $\beta = 0$. For simplicity, assume that $\Phi^{-1}(0)/T$ is connected and contains more than one point.
Then, by definition, every point in $\Phi^{-1}(0)$ is tall. 
If $p \in \Phi^{-1}(0)$ is a critical point of $g$ modulo $\Phi$,
then $p$ is also a singular point of $\left(M,\omega, \ft \times \R, f\right)$. So, by assumption, $p$ is non-degenerate. 
Therefore, by Proposition \ref{connected}, the claim follows from 
Propositions \ref{gcrit} and \ref{nondegenerate}.
\end{proof}

Therefore, we review a few lemmas from Morse theory.

\begin{Lemma}\label{uniquemin} Let $N$ be a connected manifold and let $\overline{g} \colon N \to \R$ be a proper Morse function. If  the fiber $\overline{g}^{-1}(c)$ is connected for each $c \in \R$, then $\overline{g}$ has at most one point of index $0$ (or coindex $0$).
\end{Lemma}

\begin{proof}
Let $p$ be a critical point of index $0$. We may assume that $\overline{g}(p)=0$.
By the Morse lemma, there exists a neighborhood $U$ of $p$
such that $\overline{g}^{-1}(0) \cap U = \{p\}$ and $\overline{g}(U) \subseteq [0,\infty)$.
Since the fibers are connected, the former implies
that $\overline{g}^{-1}(0) = \{p\}$.  Hence, the latter
implies that $\overline{g}^{-1}[0,\infty)$ is a nonempty  subset of $N$ that is both open and closed.  Since $N$ is connected, this implies that $\overline{g}$ achieves its minimum at $p$. Hence, $p$ is the unique point of index 0.
\end{proof}
\begin{Lemma}\label{Morse2}
    Let $N$ be a closed, connected 2-dimensional manifold and let $\overline{g} \colon N \to \R$ be a Morse function. The following are equivalent:
    \begin{enumerate}[label=(\arabic*),ref=(\arabic*)]
        \item no critical point of $\overline{g}$ has index one; and
        \item the fiber $\overline{g}^{-1}(c)$ is connected for all $c \in \R$ and $N$ is simply connected. 
    \end{enumerate}
\end{Lemma}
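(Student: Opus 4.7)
The plan is to prove the two implications separately, using the classification of closed connected surfaces as the bridge that identifies $N$ with $S^2$ in each direction. The key inputs are the $\Z/2$ Morse inequalities, the previously established Lemma \ref{uniquemin}, and the Morse lemma.

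For the direction $(1) \Rightarrow (2)$, I would first apply the $\Z/2$ Morse inequalities to conclude $\dim_{\Z/2} H_1(N;\Z/2) \le n_1 = 0$. Among closed connected 2-manifolds, only $S^2$ has trivial first homology mod $2$, so $N \cong S^2$; in particular $N$ is simply connected. Since $N$ is compact, $\overline{g}$ attains its extrema and so $n_0, n_2 \ge 1$; combined with $\chi(S^2) = 2 = n_0 + n_2$ (as $n_1 = 0$), this forces $n_0 = n_2 = 1$. Call these critical points $p_-$ and $p_+$. Choosing a Riemannian metric, the negative gradient flow gives a diffeomorphism $N \smallsetminus \{p_-, p_+\} \cong S^1 \times (\overline{g}(p_-), \overline{g}(p_+))$ under which $\overline{g}$ becomes the projection to the second factor; hence every regular level is a single circle, while the two critical levels are singletons. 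So all level sets are connected.

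For the direction $(2) \Rightarrow (1)$, the classification of closed connected surfaces together with simple connectedness forces $N \cong S^2$, hence $\chi(N) = 2$. By Lemma \ref{uniquemin}, every local extremum of $\overline{g}$ is global; in particular all local minima share a common value $m$. If there were two distinct local minima $p_1 \ne p_2$, then by the Morse lemma, for sufficiently small $\epsilon > 0$ the sublevel set $\overline{g}^{-1}((-\infty, m+\epsilon])$ would decompose as two disjoint Morse-coordinate disks around $p_1$ and $p_2$ (and nothing else, because no other critical values lie in $[m, m+\epsilon]$); its boundary $\overline{g}^{-1}(m+\epsilon)$ would then be a disjoint union of two small circles, contradicting connectedness. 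Hence there is a unique minimum, and symmetrically a unique maximum, so $n_0 = n_2 = 1$. The identity $\chi(N) = n_0 - n_1 + n_2 = 2 - n_1$ now gives $n_1 = 0$.

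The only step that needs a bit of care is the one where we rule out two distinct local minima: we must justify that for small enough $\epsilon$ the level $\overline{g}^{-1}(m+\epsilon)$ consists of exactly the two small circles, which uses that $m$ is the global minimum and that there are no other critical values in $[m, m+\epsilon]$. Every other step is a routine invocation of Morse theory and the classification of surfaces, so I do not expect any real obstacle.
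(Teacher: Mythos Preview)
Your proof is correct. Both directions reach the same endpoint (identifying $N$ with $S^2$ and counting critical points via the Euler characteristic), but your route to $(1)\Rightarrow(2)$ differs from the paper's. The paper first invokes Proposition~\ref{prop:connected} (no index/coindex one $\Rightarrow$ connected fibers) to get connected level sets directly, then uses Lemma~\ref{uniquemin} to conclude there are exactly two critical points, and finally cites Reeb's theorem (Milnor, Theorem~4.1) for $N\cong S^2$. You instead bypass Proposition~\ref{prop:connected} entirely: the $\Z/2$ Morse inequalities plus the classification of surfaces give $N\cong S^2$ first, and then you read off $n_0=n_2=1$ from $\chi(S^2)=2$ and recover the connectedness of level sets from the gradient-flow cylinder description. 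This is a legitimate alternative, arguably more self-contained since it avoids the separate connectivity proposition; the paper's version is shorter because it leans on results already established in the paper. For $(2)\Rightarrow(1)$ the two arguments are essentially the same: the paper also uses Lemma~\ref{uniquemin} to get $n_0=n_2=1$ (the uniqueness is implicit in the proof of that lemma, which shows the minimum level is a single point), and then finishes with $\chi(N)=2-n_1$. Your explicit two-disk argument just unpacks that step.
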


\begin{proof}
    Assume that no critical point of $\overline{g}$ has index one. Since $N$ is 2-dimensional, by Proposition \ref{prop:connected} the fiber $\overline{g}^{-1}(c)$ is connected for all $c \in \R$. Hence, since $N$ is closed, $\overline{g}$ has exactly two critical points by Lemma \ref{uniquemin}. By \cite[Theorem 4.1]{Mil}, $N$ is homeomorphic to a sphere and, therefore, simply connected.

    Conversely, assume that the fiber $\overline{g}^{-1}(c)$ is connected for all $c \in \R$ and that $N$ is simply connected. Since $N$ is closed, $\overline{g}$ has exactly one point of index zero and one point of index two by Lemma \ref{uniquemin}. Hence, by \cite[Theorem 5.2]{Mil}, the Euler characteristic of $N$ is equal to $2- C_1$, where $C_1$ is the number of critical points of $\overline{g}$ of index one. Since $N$ is closed and simply connected, it follows that $C_1 = 0$.
\end{proof}

We can now prove our main result.

\begin{proof}[Proof of Theorem \ref{thm:equivalence}]
 Let $\left(M,\omega, \ft \times \R, f:=(\Phi,g)\right)$ be an integrable system such that $(M,\omega,\Phi)$ is a complexity one $T$-space with a proper moment map. Assume that each tall 
singular point in $\left(M,\omega, \ft \times \R, f\right)$ is non-degenerate and that no such point has a hyperbolic block and connected $T$-stabilizer. Fix $\beta \in \ft^*$ 
such that $\Phi^{-1}(\beta)/T$ contains more than one point; we may assume that $\beta = 0$.
Since $\Phi$ is proper, the reduced space $\MmodT$ is compact and connected by Theorem~\ref{thm:connected}.
 Hence, every point in $\MmodT$ is tall, and so by Proposition~\ref{prop:Morse}, $\MmodT$ is a smooth closed,  connected oriented surface  so that 
$\overline{g} \colon \MmodT \to \R$
is a Morse function; moreover, $\overline{g}$ has no critical points of index $1$.  Therefore,
 by Lemma~\ref{Morse2}  the fiber $\overline{g}^{-1}(c)$ is connected
for all $c \in \R$ and  $\MmodT$ is simply connected.
Finally, since $T$ is connected, this implies that the fiber $f^{-1}(0,c)$ is connected for all $c \in \R$.
\end{proof}

To prove Theorem~\ref{thm:converse}, we also need the following result.

\begin{Lemma} \label{even} Let $N$ be a closed, connected oriented $2$-dimensional manifold
and let $\overline{g} \colon N \to \R$ be a Morse function. If the fiber $\overline{g}^{-1}(c)$ is connected for each $c \in \R$, then each fiber contains an even number of critical points of index $1$.
\end{Lemma}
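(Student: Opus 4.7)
The plan is to compute, for each critical value $c_0$ of $\overline{g}$, the Euler characteristic of a tubular neighborhood of the level set $\overline{g}^{-1}(c_0)$ in two independent ways, and compare. Let $k$ denote the number of index-$1$ critical points on $\overline{g}^{-1}(c_0)$; I want to show $k$ is even. Since an index-$1$ critical point is neither a local minimum nor a local maximum, if $c_0$ equals the global minimum or maximum value of $\overline{g}$, then $k = 0$ and the claim is trivial. So assume $c_0$ is not an extremal value, and choose $\epsilon > 0$ small enough that $c_0 \pm \epsilon \in \overline{g}(N)$ are regular values with no other critical values of $\overline{g}$ in $[c_0 - \epsilon, c_0 + \epsilon]$; set $V := \overline{g}^{-1}([c_0 - \epsilon, c_0 + \epsilon])$. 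Since $c_0 \pm \epsilon$ are regular, $V$ is a compact oriented surface with boundary $\partial V = \overline{g}^{-1}(c_0 - \epsilon) \sqcup \overline{g}^{-1}(c_0 + \epsilon)$, and by the connectedness hypothesis each summand is a single circle, so $\partial V \simeq S^1 \sqcup S^1$.

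Next I will argue that $V$ itself is connected. Fix a gradient-like vector field for $\overline{g}$. Any $q \in V$ with $\overline{g}(q) > c_0$ can be flowed downward: since $\overline{g}$ decreases monotonically along the flow and the only critical points in the relevant value range are the index-$1$ points at level $c_0$, the flow either reaches $\overline{g}^{-1}(c_0)$ in finite time or converges to one of those critical points. Either way, $q$ lies in the same path-component of $V$ as some point of $\overline{g}^{-1}(c_0)$. The analogous statement for points below $c_0$ uses the upward flow. Since $\overline{g}^{-1}(c_0)$ is connected by hypothesis, this shows $V$ is connected. By the classification of compact oriented surfaces with boundary, $\chi(V) = 2 - 2g - 2 = -2g$ for some genus $g \geq 0$.

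On the other hand, standard Morse theory exhibits $V$ as diffeomorphic to the product cylinder $\overline{g}^{-1}(c_0 - \epsilon) \times [c_0 - \epsilon, c_0 + \epsilon]$ with one $1$-handle attached for each of the $k$ index-$1$ critical points at level $c_0$. The cylinder has $\chi = 0$, and attaching a $1$-handle decreases the Euler characteristic by $1$, so $\chi(V) = -k$. Comparing the two computations gives $k = 2g$, which is even. The main subtlety is establishing the connectedness of $V$, and this is precisely where the full strength of the hypothesis, namely that \emph{both} the critical level $\overline{g}^{-1}(c_0)$ and the nearby regular levels $\overline{g}^{-1}(c_0 \pm \epsilon)$ are connected, is used.
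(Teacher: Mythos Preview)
Your argument is correct, with one point that deserves explicit mention: you use (both in the gradient-flow step and in the handle-attachment computation of $\chi(V)$) that the only critical points at level $c_0$ have index $1$. This follows from the hypothesis---an index-$0$ or index-$2$ critical point at level $c_0$ is an isolated point of $\overline{g}^{-1}(c_0)$, so connectedness of that level set forces it to be a single point and hence $c_0$ to be a global extremum (this is exactly Lemma~\ref{uniquemin})---but you should say so rather than assume it.

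The paper's proof is organized differently: it tracks the first Betti number of the \emph{sublevel} sets $\overline{g}^{-1}((-\infty, c_0 \pm \epsilon])$. By Lemma~\ref{uniquemin} the minimum is unique, so each sublevel set is connected with a single boundary circle; a compact connected oriented surface with exactly one boundary component has even first Betti number, and crossing $k$ index-$1$ points raises $b_1$ by $k$, so $k$ is even. Your slab argument is more local---it needs only the connectedness of the three level sets $\overline{g}^{-1}(c_0)$ and $\overline{g}^{-1}(c_0\pm\epsilon)$---at the price of the separate gradient-flow argument for the connectedness of $V$; the paper's sublevel-set approach gets connectedness for free from the unique minimum but uses the global structure below $c_0$. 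Both ultimately rest on the same parity fact for compact oriented surfaces.
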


\begin{proof}
By Lemma~\ref{uniquemin}, $N$ has a unique critical point of index $0$ (at the minimum), and a unique critical point of index $2$ (at the maximum).
Assume  that the fiber $\overline{g}^{-1}(c)$ contains $k \geq 1$ critical points of index $1$. 
By the Morse Lemma $c$ is in the interior of $\overline{g}(N)$,
and so there are no critical points of index $0$ or index $2$ in $\overline{g}^{-1}(c)$.  Fix $\epsilon > 0$ so that $[c - \epsilon, c + \epsilon]$ contains no other critical values.
Since  there is only one point of index $0$ we know that the preimage $\overline{g}^{-1}((-\infty, c - \epsilon])$ is connected. Hence, the first Betti number of  $\overline{g}^{-1}((-\infty, c + \epsilon])$
is the first Betti number of $\overline{g}^{-1}((-\infty, c - \epsilon])$ plus $k$.  Since every compact oriented $2$-dimensional manifold with boundary with an odd first Betti number has at least two boundary components, both of these Betti number must be even. Hence  $k$ is even.
\end{proof}

\begin{proof}[Proof of Theorem \ref{thm:converse}]
Let $\left(M,\omega, \ft \times \R, f:=(\Phi,g)\right)$ be an integrable system such that $(M,\omega,\Phi)$ is a complexity one $T$-space with a proper moment map. Assume that each tall 
singular point in $\left(M,\omega, \ft \times \R, f\right)$ is non-degenerate. 
Fix $\beta \in \ft^*$ such that $\Phi^{-1}(\beta)/T$ contains more than one point; we may assume that $\beta = 0$. Since $\Phi$ is proper, the reduced space $\MmodT$ is compact and connected by Theorem~\ref{thm:connected}.
Hence, every point in $\MmodT$ is tall, and so by Proposition~\ref{prop:Morse}, $\MmodT$ can be given the structure of a  smooth closed,  connected oriented surface  so that 
$\overline{g} \colon \MmodT \to \R$
is a Morse function with connected fibers; moreover, given $p \in \Phi^{-1}(0)$, the orbit $[p]$ is a critical point of $\overline{g}$ of index $1$  exactly if $p$ has a hyperbolic block and connected $T$-stabilizer.  Finally, since
$T$ is connected, the fiber $\overline{g}^{-1}(c)$ is connected exactly if 
the fiber $f^{-1}(\beta,c)$ is connected for any $c \in \R$.

If $\Phi^{-1}(\beta_0)/T$ is simply connected for some $\beta_0 \in \Phi(M)$, then $\Phi^{-1}(\beta')/T$ is simply connected for all $\beta' \in \Phi(M)$ by \cite[Lemma 5.7 and Corollary 9.7]{KT1}. In particular, $\MmodT$ is simply connected. Therefore,  the fibers $f^{-1}(\beta,c)$ are connected for all $c \in \R$ exactly if  $\MmodT$ has  no  tall singular  points with a hyperbolic block and connected $T$-stabilizer by Lemma~\ref{Morse2}.  

Similarly, if each fiber of $f$ contains at most one $T$-orbit of non-degenerate singular points with a hyperbolic block, then each fiber  of $\overline{g}$ contains at most one one critical point of index $1$.
Therefore, the fibers $f^{-1}(\beta,c)$ are connected for all $c \in \R$ exactly if $\MmodT$ has no tall singular points with a hyperbolic block and connected $T$-stabilizer  by Lemmas \ref{Morse2} and \ref{even}.
\end{proof}

The following result partially generalizes Theorem \ref{thm:equivalence}. 

\begin{Proposition}\label{prop:extra-result}
    Let $\left(M,\omega, \ft \times \R, f:=(\Phi,g)\right)$ be an integrable system such that $(M,\omega,\Phi)$ is a complexity one $T$-space with connected fibers. Assume that $f$ is proper, that each tall 
singular point in $\left(M,\omega, \ft \times \R, f\right)$ is non-degenerate, and that no such point has both a hyperbolic block and connected $T$-stabilizer. Then
 the fiber $f^{-1}(\beta,c)$ is connected for each $(\beta, c) \in \ft^* \times \R$.
\end{Proposition}

\begin{proof}
Fix $\beta \in \ft^*$ 
such that $\Phi^{-1}(\beta)/T$ contains more than one point; we may assume that $\beta = 0$.
By assumption, the reduced space $\MmodT$ is connected and $\overline{g} \colon \MmodT \to \R$ is proper.
 Hence, every point in $\MmodT$ is tall, and so by Proposition~\ref{prop:Morse}, $\MmodT$ is a smooth,  connected oriented surface  so that 
$\overline{g} \colon \MmodT \to \R$
is a Morse function; moreover, $\overline{g}$ has no critical points of index $1$. Therefore,
 by Proposition \ref{prop:connected}  the fiber $\overline{g}^{-1}(c)$ is connected
for all $c \in \R$.
Finally, since $T$ is connected, this implies that the fiber $f^{-1}(0,c)$ is connected for all $c \in \R$.
\end{proof}

Since Proposition~\ref{prop:connected} is of central importance, we conclude this section with a slight variation of the  proof given in \cite{PRV}. We begin with a preliminary lemma (cf. \cite[Lemma 4.3]{PRV}).

\begin{Lemma}\label{lem:connected} Let $N$ be a connected manifold and let $h \colon N \to \R$ be a proper function such that $h(N) \subseteq [0,\infty)$. Assume that the restriction of $h$ to $h^{-1}(0,\infty)$ is a Morse-Bott function with no points of index $1$.
 Then $h^{-1}(0)$ is connected.
\end{Lemma}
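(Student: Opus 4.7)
The plan is to argue by contradiction: suppose $h^{-1}(0)$ is disconnected, and then produce a decomposition of $N$ as a disjoint union of two nonempty open subsets, contradicting the connectedness of $N$.

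First, I would set up a splitting near the zero level set. Since $h$ is proper, $h^{-1}(0)$ is compact; if it decomposes as $A \sqcup B$ with $A, B$ nonempty and closed, then $A$ and $B$ are disjoint compact subsets of the manifold $N$, and thus admit disjoint open neighborhoods $U_A \supseteq A$ and $U_B \supseteq B$. A standard compactness argument using properness of $h$ shows that $h^{-1}[0, \epsilon] \subseteq U_A \cup U_B$ for all sufficiently small $\epsilon > 0$. Shrinking $\epsilon$ using the hypothesis that $0$ is an isolated critical value, I may further arrange that $h$ has no critical values in $(0, \epsilon]$. Then $h^{-1}[0, \epsilon]$ splits as the disjoint union of two nonempty clopen subsets $A_\epsilon := h^{-1}[0, \epsilon] \cap U_A$ and $B_\epsilon := h^{-1}[0, \epsilon] \cap U_B$.

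Next, I would propagate this splitting upward. For each $c > \epsilon$, I would define $A_c$ (resp.\ $B_c$) to be the union of all connected components of $h^{-1}[0, c]$ that meet $A_\epsilon$ (resp.\ $B_\epsilon$). The main claim, and the main obstacle of the proof, is that $A_c \cap B_c = \emptyset$ for every $c > \epsilon$. This rests on standard Morse--Bott theory on $h^{-1}(0, \infty)$: because $h$ is proper and Morse--Bott there, passing a critical value $c_0$ changes $h^{-1}[0, c]$ by attaching finitely many handles whose indices are the normal Morse indices of the critical submanifolds at $c_0$, and only a handle of index $1$ can merge two previously distinct connected components. Since no such critical submanifolds exist by hypothesis, components cannot merge, and distinct components of $h^{-1}[0, \epsilon]$ remain in distinct components of $h^{-1}[0, c]$ for every $c > \epsilon$; this yields $A_c \cap B_c = \emptyset$.

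Finally, I would set $A_\infty := \bigcup_{c > \epsilon} A_c$ and verify that both $A_\infty$ and its complement $N \setminus A_\infty$ are open and nonempty. For any $x \in N$, I would pick a regular value $c' > h(x)$ --- available because critical values above any positive threshold are locally finite in $\R$ by properness together with the Morse--Bott condition --- and let $F$ be the component of $h^{-1}[0, c']$ containing $x$. Then $F \cap h^{-1}[0, c')$ is an open neighborhood of $x$ in $N$ that is contained entirely in $A_\infty$ when $x \in A_\infty$, and disjoint from $A_\infty$ otherwise, the latter because the no-merging argument of the previous paragraph forces $F$ itself to avoid $A_{c'}$. Since $A \subseteq A_\infty$ and $B \subseteq N \setminus A_\infty$, both sets are nonempty. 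This exhibits $N$ as a disjoint union of two nonempty open sets, contradicting its connectedness and establishing the lemma.
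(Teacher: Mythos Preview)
Your argument is correct and hinges on the same Morse--Bott fact the paper uses: in the absence of index-$1$ critical submanifolds, distinct components of $h^{-1}[0,c]$ never merge as $c$ increases (equivalently, the restriction $H^0(h^{-1}[0,c+\epsilon)) \to H^0(h^{-1}[0,c-\epsilon))$ is onto). The paper, however, runs this in the opposite direction and more directly: given $x,y \in h^{-1}(0)$, a path between them in $N$ lands in some $h^{-1}[0,b)$; one then descends through the finitely many critical values in $(0,b)$, invoking the $H^0$-surjectivity at each step, to conclude that $x$ and $y$ remain in the same component all the way down to $h^{-1}(0)$. Your contrapositive version trades brevity for explicitness near level zero --- your disjoint-open-neighborhood step makes the correspondence between components of $h^{-1}(0)$ and of $h^{-1}[0,\epsilon]$ clean, whereas the paper's descent leaves this passage implicit. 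One small point in your final paragraph: the fact that $F$ avoids $A_{c'}$ does not by itself give $F \cap h^{-1}[0,c') \cap A_\infty = \emptyset$, since $A_\infty$ also contains $A_{c''}$ for $c'' > c'$; you need one further invocation of the no-merging argument (now between levels $c'$ and $c''$) to rule out $F$ lying inside some later $A_{c''}$, though this follows readily.
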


\begin{proof} Consider points $x$ and $y$ in $h^{-1}(0)$. Since $N$ is path-connected, there is a path $\gamma$ from $x$ to $y$. Hence, $x$ and $y$ are in the same component of $h^{-1}([0, b))$ for some $b \in \R$. If $C \subseteq h^{-1}((0,\infty))$ is a critical submanifold, then since the index of $C$ is not $1$, the restriction $H^0(h^{-1}([0,f(C) + \epsilon))) \to H^0(h^{-1}([0,f(C) - \epsilon)))$ is a surjection for sufficiently small $\epsilon > 0$.  In particular, if $x$ and $y$ are in the same component of $h^{-1}([0,h(C) + \epsilon))$, then they are also in the same component of $h^{-1}([0, h(C) - \epsilon))$.  Moreover, since $h$ is proper and its restriction to $h^{-1}(0, \infty)$ is Morse-Bott, the set of critical values in $(\delta,b)$ is finite for all regular $\delta \in (0,b)$. By  induction on these critical values,  $x$ and $y$ lie
in the same connected component $X_\delta$ of $h^{-1}([0,\delta])$ for all $\delta \in (0,b)$.  Since $N$ is Hausdorff,  and $X_\delta \subseteq X_{\delta'}$ are compact and connected for all $\delta' < \delta$,
the intersection $\cap_\delta X_\delta $ is connected.  
\end{proof}

\begin{proof}[Proof of Proposition \ref{prop:connected}]
Let $N$ be a connected manifold and let $\overline{g} \colon N \to \R$ be a proper Morse-Bott function. Fix $c \in \R$ and consider the proper function $h \colon N \to \R$ given by $h = (\overline{g} - c)^2$.  On the set $\overline{g}^{-1}(c, \infty)$ the function $h$ is Morse-Bott; the critical components of $h$ and $\overline{g}$ coincide and have the same index.  On the set $\overline{g}^{-1}(-\infty,c)$ the function $h$ is Morse-Bott; the critical components of $h$ and $\overline{g}$ coincide but the index and coindex of each critical manifold are exchanged. Therefore, if no critical point of $\overline{g}$ has index or coindex one, then $\overline{g}^{-1}(c) = h^{-1}(0)$ is connected by Lemma~\ref{lem:connected}.
\end{proof}

\end{document}